\theoremstyle{plain}
\newtheorem{theorem}{Theorem}[section]
\newtheorem{lemma}[theorem]{Lemma}
\newtheorem{corollary}[theorem]{Corollary}
\newtheorem{conjecture}[theorem]{Conjecture}
\theoremstyle{definition}
\newtheorem{definition}[theorem]{Definition}
\newtheorem{example}[theorem]{Example}
\theoremstyle{remark}
\newtheorem{remark}[theorem]{Remark}
\numberwithin{equation}{section}
\newcommand{\Hilb}{{\mathrm{Hilb}}}
\newcommand{\symm}{{\mathfrak{S}}}
\newcommand{\AAA}{{\mathcal{A}}}
\newcommand{\BBB}{{\mathcal{B}}}
\newcommand{\MMM}{\mathfrak{M}} 
\newcommand{\CC}{{\mathbb{C}}}
\newcommand{\RR}{{\mathbb{R}}}
\newcommand{\braid}{\mathrm{braid}}
\newcommand{\xx}{{\bm{x}}}
\newcommand{\ttheta}{{{\bm \theta}}}
\newcommand{\mc}[1]{\mathcal{#1}} 
\newcommand{\rank}[1]{\mathsf{rk}(#1)}
\newcommand{\intact}{\operatorname{ia}}
\newcommand{\extact}{\operatorname{ea}}
\newcommand{\intpas}{\operatorname{ip}}
\newcommand{\extpas}{\operatorname{ep}}
\newcommand{\hilb}{\operatorname{Hilb}}
\newcommand{\suchthat}{\;|\;}
\newcommand{\external}{\mathcal{E}} 
\newcommand{\groundset}{\mathsf{E}} 
\author{Brendon Rhoades}
\address{Department of Mathematics,
University of California, San Diego,
La Jolla, CA, 92093, USA}
\email{\href{mailto:bprhoades@ucsd.edu}{bprhoades@ucsd.edu}}
\author{Vasu Tewari}
\address{Department of Mathematical and Computational Sciences, University of Toronto Mississauga, Mississauga, ON L5L 1C6, Canada}
\email{\href{mailto:vasu.tewari@utoronto.ca}{vasu.tewari@utoronto.ca}}
\author{Andy Wilson}
\address{Department of Mathematics,
Kennesaw State University,
Marietta, GA, 30060, USA}
\email{\href{mailto:awils342@kennesaw.edu}{awils342@kennesaw.edu}}
\thanks{
BR was partially supported by NSF Grant DMS-2246846. VT was partially supported by NSF Grant DMS-2246961. AW was partially supported by AMS--Simons PUI Grant 434651.}
\keywords{Hyperplane arrangement, power ideal, superspace, Tutte polynomial}
\begin{document}

\title[]
{Tutte polynomials in superspace}

\begin{abstract}
We associate a quotient of superspace to any hyperplane arrangement by considering the differential closure of an ideal generated by powers of certain homogeneous linear forms.
This quotient is a superspace analogue of the external zonotopal algebra, and it further contains the central zonotopal algebra in the appropriate grading.
We show that an evaluation of the  bivariate Tutte polynomial is the bigraded Hilbert series of this quotient. We then use this fact to construct an explicit basis for the Macaulay inverse.
These results generalize those of Ardila--Postnikov and Holtz--Ron.
We also discuss enumerative consequences of our results in the setting of hyperplane arrangements.
\end{abstract}

\maketitle

\section{Introduction}
\label{sec:intro}

Consider commutative generators $\xx_n = (x_1, \dots, x_n)$
and \emph{anticommutative} generators $\ttheta_n = (\theta_1, \dots, \theta_n)$.
We define \emph{rank $n$ superspace $\Omega_n$} to be $\CC[\xx_n] \otimes \wedge \{ \ttheta_n \}$.
This ring arises in physics \cite{PS95} where the two sets of variables represent states of bosons and fermions, respectively. This explains why the variables are often referred to as \emph{bosonic} and \emph{fermionic} variables.
Additionally, $\Omega_n$ is the Hochschild homology of the polynomial ring $\CC[\xx_n]$ and, as such, may be considered as the ring of polynomial-valued holomorphic differential forms on $\CC^n$. In this latter context, one interprets the fermionic variable $\theta_i$ as the differential $dx_i$.

In the last few years, inspired by the study of various generalizations of the coinvariant algebra from a representation theoretic viewpoint, there has been great interest in `superspace quotients.' 
So far, the primary impetus has come from Macdonald-theoretic considerations. In particular, a conjecture of Zabrocki \cite{zabrocki2019module} makes a fascinating connection between the `Delta conjecture' of Haglund--Remmel--Wilson \cite{HRW18} and a module that generalizes the diagonal coinvariants by introducing fermionic variables into the fray.
The influence of these works may be witnessed by the rapid activity in this area; see \cite{BCSZ23, iraci2024smirnov, IRR23,  Kim23,KRR23,RW22,rhoades2023hilbert,WS21,WS23} for a far-from-complete list.
We note also that there is another interesting strand of research \cite{BfDLM12,DLM01,DLM03,DLM06} studying lifts of classical symmetric function objects to superspace which predates recent work in this area, though we shall not take this perspective in this article.

In contrast to the aforementioned works,  we approach superspace from the perspective of hyperplane arrangements (or equivalently, realizable matroids).
Our point of entry is the theory of power ideals, as developed by Ardila--Postnikov \cite{AP10} and Holtz--Ron \cite{HR11}, amongst others; see \cite{CdPbook} for a book-length treatment on a large swathe of mathematics underlying the general area.
As we hope to demonstrate, power ideals have a superspace generalization that merits deeper investigation. In this article, we develop a complete understanding in what we shall refer to as the `superspace external' case, generalizing the classical external zonotopal algebra \cite{HR11}.

Regarding $\Omega_n$ as a ring
of differential forms, we have the {\em Euler operator}
(or {\em total derivative})
$d: \Omega_n \rightarrow \Omega_n$ defined by
\begin{equation}
    df \coloneqq \sum_{i = 1}^n \left( \partial f / \partial x_i \right) \cdot \theta_i
\end{equation}
where in the evaluation of 
$\left( \partial f / \partial x_i \right)$
the $\ttheta$-variables are treated as constants.
A {\em differential ideal} $I \subseteq \Omega_n$ 
is an ideal closed under the action of $d$.

Throughout we treat the $\xx$-variables and $\ttheta$-variables as elements of degree $(1,0)$ and $(0,1)$, respectively.
Let $I \subseteq \CC[\xx_n]$ be a homogeneous ideal.
The {\em differential closure} 
$I^\theta \subseteq \Omega_n$ of $I$ is the smallest 
differential ideal in $\Omega_n$ containing $I$.
If $g_1, \dots, g_r \in \CC[\xx_n]$ are homogeneous 
generators of $I$, it is not hard to see\footnote{using
the relation $d \circ d = 0$ together
with the product rule
$d(f \cdot g) = d f \cdot g \pm f \cdot dg$
for bihomogeneous $f, g \in \Omega_n$} 
that
$I^\theta$ has generating set 
\begin{equation}
I^{\theta}=
(g_1, \dots, g_r, d g_1, \dots, d g_r) \subseteq 
\Omega_n.
\end{equation}
As the ideal $I^{\theta} \subseteq \Omega_n$ 
is bihomogeneous, 
the quotient space $\Omega_n/I^{\theta}$ 
acquires a bigrading wherein the $\theta$-degree $0$ 
piece is the original graded quotient $\CC[\xx_n]/I$. 
If the commutative quotient $\CC[\xx_n]/I$ has interesting
combinatorial, algebraic, or geometric properties,
it is natural to ask whether and how these properties
extend to the supercommutative quotient
$\Omega_n/I^\theta$.

We apply this recipe to power ideals determined by hyperplane arrangements.
Recall that a linear hyperplane in $\CC^n$ is a codimension $1$ subspace. More generally,
a hyperplane is any affine translation of a linear hyperplane.
We will be interested in \emph{multiarrangements} $\AAA = \{ H_1, \dots, H_m \}$ of hyperplanes in $\CC^n$, i.e. collections of hyperplanes wherein we allow repeats.
Given a multiarrangement $\AAA$, we refer to the number of hyperplanes taken with multiplicity as its \emph{size}.
Associated with $\AAA$ is a family of homogeneous ideals $J_{\AAA,k}\subseteq \CC[\xx_n]$ for $k\geq -1$ with the property that the generators are certain powers of homogeneous linear forms. 
The resulting quotient rings have fascinating mathematical properties with deep ties to numerical analysis, algebra, geometry, and combinatorics, particularly when $k\in \{-1,0,1\}$ \cite{Be18,DM85,Lenz12, Lenz16, Nen19,PS04, RRT23} . 
See Subsection \ref{subsec:superpower} for a precise definition of these ideals. 

In this article, we focus on the differential closure of $J_{\AAA,1}$ inside $\Omega_n$, which we denote $I_{\AAA}$, and the resulting quotient space $\external_{\AAA} = \Omega_n/I_\AAA$.
This space may be considered as a lift of the classical external zonotopal algebra.
Our first main result is the description of the bigraded Hilbert series for the superspace quotient $\external_{\AAA}$ as a specialization of the Tutte polynomial. 
This generalizes results of Ardila--Postnikov \cite{AP10} and Holtz--Ron \cite{HR11} wherein a single parameter specialization of the Tutte polynomial was obtained as a singly-graded Hilbert series.
It bears emphasizing that the full bivariate Tutte polynomial (up to reparametrization) has indeed been obtained as Hilbert series in related contexts. 
Berget \cite[Theorem 1.1]{Ber10} constructed a bigraded \emph{vector subspace} of the polynomial ring whose Hilbert series is shown to be a two-parameter Tutte evaluation; see also \cite{De01} for a related construction. As further explained in \cite{Ber10}, Terao \cite{Te02}, building on earlier work of Orlik--Terao \cite{OT94}, constructed a \emph{commutative} bigraded algebra with a similar property. 
In contrast, our construction (also a bigraded algebra) involves $\Omega_n$ which is the tensor product of the symmetric algebra and the exterior algebra on an $n$-dimensional space; whereas the bigradings
in \cite{Ber10,OT94} depend on the arrangement in 
question, our bigrading is induced by the natural
bosonic/fermionic on $\Omega_n$ and 
is independent of the arrangement.
This aspect amongst others distinguishes our construction from the ones just mentioned.

\begin{theorem}\label{th:main_1}
For any rank $r$ multiarrangement $\AAA$ of size $m$ in $\CC^n$ we have the bigraded Hilbert series
     \[
        \Hilb\left (\external_{\AAA}; q,t\right ) = (1+t)^rq^{m-r}T_{\AAA}\left (\frac{1+q+t}{1+t},\frac{1}{q}\right ).
     \]    
where $q$ tracks bosonic degree and $t$ tracks
fermionic degree.
\end{theorem}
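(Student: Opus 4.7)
The plan is induction on the size $m$ of $\AAA$, via a deletion--contraction argument mirroring the Tutte recurrence.

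Write $F_\AAA(q,t) := (1+t)^r q^{m-r} T_\AAA\bigl(\tfrac{1+q+t}{1+t}, \tfrac{1}{q}\bigr)$ for the right-hand side of Theorem~\ref{th:main_1}. For $H \in \AAA$ neither a loop nor a coloop, $\AAA \setminus H$ has rank $r$, $\AAA/H$ has rank $r-1$, and both have $m-1$ elements, so the Tutte recurrence $T_\AAA = T_{\AAA \setminus H} + T_{\AAA/H}$ specializes to
\[
F_\AAA(q,t) = q\cdot F_{\AAA \setminus H}(q,t) + (1+t)\cdot F_{\AAA/H}(q,t).
\]
For a coloop $H$, one similarly obtains $F_\AAA = (1 + q + t)\cdot F_{\AAA \setminus H}$. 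We aim to prove the same recurrences hold for $\Hilb(\external_\AAA; q,t)$; combined with direct verification of base cases (the empty arrangement and arrangements of coloops only), the theorem follows by induction on $m$.

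The algebraic heart of the argument will be a short exact sequence of bigraded $\Omega_n$-modules that witnesses this recurrence. The natural candidate is multiplication by the linear form $\ell_H$ on $\external_\AAA$: we expect its cokernel to be identified, up to a $q$-shift, with $\external_{\AAA \setminus H}$, contributing $q\cdot \Hilb(\external_{\AAA \setminus H})$, while its kernel should be identified with two copies of $\external_{\AAA/H}$, one bosonic and one $\theta_H$-shifted, contributing $(1+t)\cdot\Hilb(\external_{\AAA/H})$. The source of the $(1+t)$ enhancement over the classical ($t=0$) case is structural: passing from $\AAA \setminus H$ to $\AAA$ enlarges the bosonic ideal $J_{\AAA \setminus H, 1}$ by a power $\ell_H^k$, and differential closure then forces the fermionic generator $d(\ell_H^k) = k\,\ell_H^{k-1}\theta_H$, which accounts for the second copy.

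The principal obstacle will be to construct this exact sequence rigorously. The challenge is that differential closure couples the bosonic and fermionic sides of $\Omega_n$, so we must control $I_\AAA$ modulo $I_{\AAA \setminus H}^{\theta}$ carefully enough to identify the kernel of $\cdot\ell_H$ with the claimed two copies of $\external_{\AAA/H}$. Concretely, one must verify that the combined contribution of $\ell_H^k$ and $d(\ell_H^k)$ modulo the deletion ideal cuts out exactly a bosonic copy of $\external_{\AAA/H}$ and its $\theta_H$-translate---no more and no less---which amounts to upgrading the classical Ardila--Postnikov/Holtz--Ron exact sequence to the superspace setting. Once this recurrence is established, the induction proceeds routinely.
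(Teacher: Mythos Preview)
Your overall plan---deletion/contraction via a short exact sequence and the Tutte--Grothendieck recipe---is exactly the paper's strategy, and your recurrence
\[
F_\AAA(q,t) = q\cdot F_{\AAA\setminus H}(q,t) + (1+t)\cdot F_{\AAA/H}(q,t)
\]
is the right target. But the specific exact sequence you propose does not work as written, and the mismatch is not just a technicality.

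You suggest studying multiplication by $\ell_H$ as an endomorphism of $\external_\AAA$, with cokernel identified with $\external_{\AAA\setminus H}$ and kernel with two copies of $\external_{\AAA/H}$. First, an endomorphism yields a four-term exact sequence, not a three-term one, so the bookkeeping does not collapse to the recurrence. Second, and more seriously, your assignment of deletion and contraction is reversed. Take $\AAA=\{H\}$ with $H=\{x_1=0\}$ in $\CC^2$: then $\external_\AAA$ has basis $\{1,x_1,\theta_1\}$, and the cokernel of $\cdot\,x_1$ on $\external_\AAA$ is $\CC\{1,\theta_1\}$, with Hilbert series $1+t$. This matches $(1+t)\cdot\Hilb(\external_{\AAA|H})$, the \emph{restriction} side, not $q\cdot\Hilb(\external_{\AAA\setminus H})=q$. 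The kernel has Hilbert series $q+t$, which matches neither. So the identifications you posit are false already in the smallest example.

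The paper avoids this by working on Macaulay inverse systems rather than quotients. Multiplication by $\alpha_H$ gives an \emph{injection} $\varphi: I^\perp_{\AAA-H}\hookrightarrow I^\perp_\AAA$ (not an endomorphism), and the crux is the construction of the surjection
\[
\psi: I^\perp_\AAA \longrightarrow I^\perp_{\AAA|H}\oplus I^\perp_{\AAA|H},\qquad \psi(f)=\bigl(f|_{x_1,\theta_1=0},\ (\theta_1\odot f)|_{x_1=0}\bigr),
\]
whose second component---contraction by $\theta_H$ followed by restriction---is the non-obvious fermionic ingredient producing the extra $(1+t)$. Proving $\psi$ is well-defined and surjective is the substantive work; surjectivity in particular is established only after one first exhibits an explicit spanning set for $I^\perp_\AAA$ (products of $\alpha_i$'s and $d\alpha_i$'s), via an induction interleaved with the exactness proof itself. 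Your sketch does not anticipate this step, and without it the ``no more and no less'' verification you flag cannot be carried out.
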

The proof of Theorem~\ref{th:main_1}
involves a short exact sequence (see~\eqref{exact-sequence}) that witnesses a deletion-restriction type recursion. 
While deletion-restriction arguments are  
typical when dealing with evaluations of Tutte polynomials
and short exact sequences have arisen
in the commutative context
(see e.g. \cite[Proof of Prop. 4.5]{AP10}),
the novelty of our approach lies in using fermionic 
generators in a manner that might not be obvious; 
see~\eqref{eq:non_standard}.

The equality in Theorem~\ref{th:main_1} has some pleasant consequences.
The first is that the superspace analogue of the external zonotopal algebra `interpolates' between the classical external zonotopal algebra and the classical central zonotopal algebra. Additionally, the specialization $q=1$ produces the face-vector of the polyhedral complex naturally produced by a generic hyperplane arrangement obtained by perturbing the linear multiarrangement $\AAA$;
see Remark~\ref{rmk:fields} 
for an analogous result \cite{rhoades2023hilbert}
in coinvariant theory where 
superization of an ideal yields information about 
faces of higher codimension. 
We are further able to show that the dimension of $\external_{\AAA}$ equals the number of regions in the generic deformation of the `doubled' arrangement obtained by adding to $\AAA$ a copy of each hyperplane in it. 
When restricted to the class of subarrangements of the braid arrangement called graphical arrangements, the preceding observation gives an algebraic interpretation to region counts of generic bigraphical arrangements studied by Hopkins and Perkinson \cite[Section 3]{HoPe16}.

Our second main result amounts to unraveling the Hilbert series in Theorem~\ref{th:main_1} in order to obtain an explicit description for a basis of the Macaulay inverse space $I_{\AAA}^{\perp}$, which is isomorphic to $\external_{\AAA}$ as vector spaces.
Let $\mc{B}$ denote the set of bases for the matroid $\MMM_{\AAA}$ corresponding to $\AAA$. 
Given $B\in \mc{B}$, we let $EP_\AAA(B)$, 
 $IP_\AAA(B)$, $IA_\AAA(B)$ denote respectively the sets of externally passive elements, internally passive elements, and internally active elements. These notions are defined in Section \ref{sec:basis}.
\begin{theorem}\label{th:main_2}
For $H\in \AAA$, let $\alpha_H$ denote the\footnote{unique up to a nonzero scalar} homogeneous linear form with zero set $H$.
The following set forms a basis for $I_{\AAA}^{\perp}$:
\begin{equation}
M_{\AAA}\coloneqq \bigcup_{B\in \mc{B}}
\left\{
\prod_{e \, \in \, E} \alpha_e \times \prod_{i \, \in \, I} d \alpha_i \times \prod_{s \, \in \, S} \alpha_s \times \prod_{t \, \in \, T} d \alpha_t \,:\, 
\begin{array}{c}
E = EP_\AAA(B), \, \,
I \subseteq IP_\AAA(B), \\ S, T \subseteq IA_\AAA(B), \, \, S \cap T = \varnothing
\end{array} 
\right\}.
\end{equation}
\end{theorem}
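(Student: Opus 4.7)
The plan is to establish the theorem in three steps: matching cardinalities, verifying containment $M_\AAA \subseteq I_\AAA^\perp$, and proving linear independence.

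First, for the cardinality match, I would expand the Tutte polynomial in its internal/external activity form $T_\AAA(x, y) = \sum_{B \in \mc{B}} x^{\intact(B)} y^{\extact(B)}$ and substitute into the formula of Theorem~\ref{th:main_1}. Using $\intact(B) + \intpas(B) = r$ and $\extact(B) + \extpas(B) = m - r$, a direct computation yields
\[
\Hilb(\external_\AAA; q, t) = \sum_{B \in \mc{B}} q^{\extpas(B)} (1+t)^{\intpas(B)} (1+q+t)^{\intact(B)}.
\]
Each summand matches, term by bidegree, the contribution of the $B$-indexed subset of $M_\AAA$: the factor $q^{\extpas(B)}$ records the mandatory $\prod_{e \in EP_\AAA(B)} \alpha_e$, the factor $(1+t)^{\intpas(B)}$ records the binary choice $I \subseteq IP_\AAA(B)$, and $(1+q+t)^{\intact(B)}$ records the threefold per-element choice on $IA_\AAA(B)$: place in $S$ (weight $q$), in $T$ (weight $t$), or in neither. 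So $|M_\AAA| = \dim I_\AAA^\perp$, with bigraded enumerations agreeing.

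Second, to verify $M_\AAA \subseteq I_\AAA^\perp$, I would use the super-Leibniz rule together with the classical description of $J_{\AAA, 1}^\perp$. Since $I_\AAA = (J_{\AAA, 1})^\theta$ is generated by the power-of-linear-form generators of $J_{\AAA, 1}$ together with their images under $d$, it suffices to check that each $f \in M_\AAA$ is annihilated by both types under the Macaulay pairing on $\Omega_n$. The bosonic factor of any $f \in M_\AAA$ takes the form $\prod_{e \in EP_\AAA(B) \cup S} \alpha_e$ with $S \subseteq IA_\AAA(B)$, which is (a scalar multiple of) the standard Holtz--Ron/Ardila--Postnikov basis element of the classical external zonotopal algebra indexed by $(B, S)$; annihilation by the commutative generators then follows from the classical theory. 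The fermionic factors $\prod d\alpha$ have pure $\theta$-degree and pass transparently through commutative differential operators. Annihilation by a generator $dg$ reduces, via $d(fg) = (df)g \pm f(dg)$ and the fact that $g$ kills the bosonic part of $f$, to the commutative statement.

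Third, for linear independence, my plan is to induct on the size of $\AAA$ using the short exact sequence~(\ref{exact-sequence}) that underpins Theorem~\ref{th:main_1}. Given a distinguished hyperplane $H \in \AAA$, partition $M_\AAA$ according to how $H$ participates in each basis $B$: the pieces with $H \in EP_\AAA(B)$ match $M_{\AAA \setminus H}$ after multiplication by $\alpha_H$, the pieces with $H \in B$ (internal activity/passivity) align with $M_{\AAA / H}$ after accounting for the corresponding $\alpha_H$ or $d\alpha_H$ factors, and compatibility between these pieces and the two outer terms of the exact sequence lifts the inductive hypothesis to $\AAA$. The base case is a single hyperplane, where $M_\AAA$ is manifestly linearly independent.

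The main obstacle lies in the third step: ensuring that the partition of $M_\AAA$ by the role of $H$ aligns exactly with the deletion--restriction short exact sequence, so that independence descends term-by-term. The activities $\intact, \intpas, \extact, \extpas$ depend on an ordering of the ground set and reshuffle delicately under deletion; choosing $H$ compatibly with this ordering (so the activity bookkeeping transforms predictably) and tracking the interaction between $\alpha_H, d\alpha_H$ and the fermionic pieces $\prod d\alpha_i$ will be the central technical task.
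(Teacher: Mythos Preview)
Your three-step plan is essentially the paper's own approach (Lemma~\ref{perp-membership} for containment, the short exact sequence \eqref{exact-sequence} together with Lemma~\ref{active-passive-decomposition} for the inductive step), with the extra cardinality bookkeeping of Step~1 being a harmless redundancy once Step~3 is carried out. However, two of your steps have genuine gaps as written.

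\textbf{Step 2, annihilation by $dg$.} Your reduction ``via $d(fg)=(df)g\pm f(dg)$ and the fact that $g$ kills the bosonic part'' does not go through: the product rule for the Euler operator $d$ has no direct bearing on the $\odot$-action of $dg=(\rho+1)\lambda_L^\rho\, d\lambda_L$, and knowing $\lambda_L^{\rho+1}\odot f_{\mathrm{bos}}=0$ does \emph{not} by itself force $\lambda_L^\rho\odot f_{\mathrm{bos}}=0$. What actually works (and what the paper does in Lemma~\ref{perp-membership}) is the dichotomy: if $\lambda_L^\rho\odot\prod_{s\in S'}\alpha_s\neq 0$ then the surviving term forces $S'$ to contain \emph{all} $\rho$ hyperplanes not containing $L$, whence every fermionic index $i$ satisfies $L\subseteq H_i$ and $d\lambda_L\odot d\alpha_i=0$. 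Your classical-theory shortcut misses this interaction between the bosonic and fermionic factors.

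\textbf{Step 3, the partition and base case.} When $H$ is chosen as the largest hyperplane and is not a coloop, then for every basis $B''\ni H$ one has $H\in IP_\AAA(B'')$, never $IA_\AAA(B'')$. So the factor attached to $H$ is $d\alpha_H$ (if $H\in I$) or $1$ (if $H\notin I$), \emph{not} ``$\alpha_H$ or $d\alpha_H$'' as you wrote. This binary choice is exactly what produces the \emph{two} copies of $I_{\AAA\mid H}^\perp$ on the right of \eqref{exact-sequence}; your description obscures why the doubling occurs. Moreover, the induction does not bottom out at a single hyperplane: the step requires an $H$ that is neither a loop nor a coloop, so the base case must be the full class of arrangements consisting solely of loops and coloops (handled in the paper by Lemma~\ref{coordinate-case}).
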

The notation $M_\AAA$ reflects the status of these elements as monomials
in $\alpha_H$ and $d \alpha_H$ for hyperplanes $H \in \AAA$.
Again we are able to recover known results (cf. cases $k=0,-1$ in  \cite[Proposition 4.5]{AP10}) by picking appropriate subsets of $M_{\AAA}$.

\medskip

\noindent\textbf{Outline:} We begin by introducing the necessary background in Section~\ref{sec:background}. 
In Section~\ref{sec:spanning} we define
our primary object of study, superpower ideals, 
and exhibit a spanning set for their Macaulay inverse
systems.
The arguments in this section are technical but yield a short exact sequence~\eqref{exact-sequence} that is foundational for our needs.
In Section~\ref{sec:payoff}, we use this exact sequence to obtain the bigraded Hilbert series of our superspace quotients as an evaluation of the Tutte polynomial of the underlying matroid. In Subsection~\ref{subsec:real}, we describe enumerative applications in the setting of real arrangements.
In Section~\ref{sec:basis}, we obtain an explicit basis for the Macaulay inverse by combinatorially interpreting the Hilbert series obtained earlier.
We close with some final remarks and~conjectures.

\section{Background}
\label{sec:background}
We shall generously, and often interchangeably, use notation and terminology from the setting of hyperplane multiarrangements and matroids without proper introduction. The reader is referred to standard resources \cite{OT92, Ox11, Zas75} on these topics for any unexplained jargon encountered.

\subsection{Superspace}
\label{subsec:superspace}
As mentioned in the introduction, superspace $\Omega_n$ is the tensor product of the symmetric algebra $\CC[\xx_n]$ in $n$ bosonic variables with the exterior algebra $\wedge\{\ttheta_n\}$ in $n$ fermionic variables. 
A \emph{monomial} in $\Omega_n$ is defined to be a product of a monomial in the $\xx$-variables with a nonzero monomial in the $\ttheta$-variables.
Monomials in the $\ttheta$-variables, i.e. fermionic monomials, are indexed up to sign by subsets $J=\{j_1<\cdots<j_r\}\subseteq [n]$. 
Given such a $J$ we set $\theta_J\coloneqq \theta_{j_1}\cdots \theta_{j_r}$.
If $x_1^{a_1}\cdots x_n^{a_n}\theta_{j_1}\cdots \theta_{j_k}$ is a monomial in $ \Omega_n$, we refer to the degree $a_1+\cdots+a_n$ in the $\xx$-variables as the \emph{bosonic} degree and the degree $k$ in the $\ttheta$-variables as the \emph{fermionic} degree.
The $\CC$-algebra $\Omega_n$ admits a direct sum 
decomposition
\begin{equation}
    \Omega_n = \bigoplus_{i, j \, \geq \, 0}
    (\Omega_n)_{i,j}
\end{equation}
where $(\Omega_n)_{i,j} = \CC[\xx_n]_i \otimes \wedge^j \{\ttheta_n\}$ 
consists of bihomogeneous 
elements of bosonic degree $i$ and fermionic degree $j$.

For $1 \leq i \leq n$, the usual partial derivative $\partial_i\coloneqq \partial/\partial x_i$ acts on the first tensor factor of $\Omega_n$ while the \emph{contraction operator}
 $\partial_i^{\theta}\coloneqq \partial/\partial \theta_i$ acts on the second factor by extending its action on \emph{fermionic} monomials:
 given distinct indices $1\leq j_1, \cdots, j_r\leq n$ we let
\begin{equation}
\partial_i^{\theta}(\theta_{j_1}\cdots \theta_{j_r})=
\left\lbrace
\begin{array}{cl}
(-1)^{s-1}\theta_{j_1}\cdots \widehat{\theta_{j_s}}\cdots \theta_{j_r} & j_s=i \text{ for some $1\leq s\leq r$}\\
0 & \text{otherwise}
\end{array}\right.
\end{equation} 
where the hat denotes omission.
The operators $\partial_i$ and $\partial_i^\theta$
satisfy the same relations as $\Omega_n$, i.e.
\begin{equation}
    \partial_i \partial_j = \partial_j \partial_i 
    \quad \quad
    \partial_i \partial^\theta_j = 
    \partial^\theta_j \partial_i \quad \quad
    \partial^\theta_i \partial^\theta_j = 
    - \partial^\theta_j \partial^\theta_i
\end{equation}
for $1 \leq i, j \leq n$.
 Given $f = f(x_1, \dots, x_n, \theta_1, \dots, \theta_n) \in \Omega_n$, we therefore have a well-defined differential operator on $\Omega_n$:
 \begin{equation}
 \partial f \coloneqq f(\partial_1, \dots, \partial_n, \partial_1^\theta, \dots, \partial_n^{\theta}).
 \end{equation}
 which in turn gives rise to an action
 \begin{equation}
(-) \odot (-): \Omega_n \times \Omega_n \rightarrow \Omega_n
 \end{equation}
 of superspace on itself by $f \odot g \coloneqq \partial f(g).$

  Let $I \subseteq \Omega_n$ be a bihomogeneous ideal. The {\em Macaulay inverse system} $I^{\perp}$ is the vector subspace of $\Omega_n$ given by
  \begin{equation}
  I^{\perp} \coloneqq  \{ g \in \Omega_n \,:\,  f \odot g = 0 \text{ for all $f \in I$ } \}
  \end{equation}
  It is not hard to see that $I^{\perp}$ is a bigraded subspace of $\Omega_n$.

  Given $f \in \Omega_n$, define $\bar{f} \in \Omega_n$ by reversing the order of the $\theta$-monomials in $f$ and taking the complex conjugate of the coefficients in $f$.
  The map $\overline{\boldsymbol{\cdot}}: \Omega_n \rightarrow \Omega_n$ is an $\mathbb{R}$-linear involution. 
  We have an inner product on $\Omega_n$ given by
 \begin{equation}
 \langle f, g \rangle \coloneqq \text{constant term of } f \odot \overline{g}.
 \end{equation}
  The bidegree direct sum decomposition $\Omega_n = \bigoplus_{i,j \geq 0} \left( \Omega_n \right)_{i,j}$ is orthogonal with respect to this pairing.
  If $I \subseteq \Omega_n$ is a bihomogeneous ideal,
  we have 
  \begin{equation}
      \Omega_n = I \oplus \overline{I^\perp}
  \end{equation}
  and
  the composite map
  \begin{equation}
  I^{\perp} \hookrightarrow \Omega_n \xrightarrow{ \, \, \overline{\boldsymbol{\cdot}} \, \, } \Omega_n \twoheadrightarrow \Omega_n/I
  \end{equation}
  is a bijection. 

  Let $V = \bigoplus_{i \geq 0} V_i$ be a 
  graded complex vector space with each piece
  $V_i$ finite-dimensional.
  If $q$ is a variable, recall that 
  the {\em Hilbert series} of $V$ is the formal 
  power series
  \begin{equation}
      \Hilb(V;q) \coloneqq \sum_{i \geq 0}
      \dim_\CC(V_i) \cdot q^i.
  \end{equation}
  More generally,
  let $V = \bigoplus_{i,j \geq 0} V_{i,j}$ be a 
  bigraded complex vector space with each piece $V_{i,j}$
  finite-dimensional.
  The {\em bigraded Hilbert series} of $V$ is 
  \begin{equation}
      \Hilb(V;q,t) \coloneqq 
      \sum_{i,j \geq 0} \dim_\CC(V_{i,j}) \cdot q^i t^j
  \end{equation}
  in the variables $q,t$. In this paper,
  the space $V$ will always be a bihomogeneous
  subspace or quotient of superspace $\Omega_n$,
  the variable $q$ will track bosonic degree, and the 
  variable $t$ will track fermionic degree.

  If $I \subseteq \Omega_n$ is a bihomogeneous ideal,
  we have the bigraded direct 
  sum $\Omega_n = I \oplus \overline{I^\perp}$. 
  The bigraded Hilbert series of the quotient ring
  $\Omega_n/I$ therefore coincides with that of the inverse
  system $I^\perp$:
  \begin{align}
      \Hilb(\Omega_n/I;q,t) = \sum_{i,j\geq 0}\dim_{\CC}\left(\Omega_n/I\right)_{i,j} 
      \cdot q^it^j = 
      \sum_{i,j \geq 0} \dim_\CC(I^\perp) \cdot q^i t^j
      = \Hilb(I^\perp;q,t).
  \end{align}

\subsection{Multiarrangements}
\label{subsec:multi}
Fix a positive integer $n$.
A {\em linear hyperplane} $H$ is a codimension one subspace of $\CC^n$. 
Any linear hyperplane $H$ is the zero set of a 
homogeneous linear form $\alpha_H=a_1x_1+\cdots a_nx_n$ for $(a_1,\dots,a_n)\in \CC^n\setminus \{0\}$.
We refer to $\alpha_H$
as the {\em normal vector} of $H$; it is 
unique
up to a nonzero scalar.

An {\em affine hyperplane} $H$
is an affine translate of a linear hyperplane;
the normal vector of an affine hyperplane is 
that of its linear translate.
A {\em multiarrangement} $\AAA$ of hyperplanes is a collection $\{H_1,\dots,H_m\}$ of hyperplanes where we allow for repeats. 
For a given hyperplane $H$ in $\AAA$, we refer to the number of times it appears in the collection $\AAA$ as its \emph{multiplicity}.
An arrangement is \emph{simple} if the multiplicity of any hyperplane in it is equal to $1$.
Henceforth, we employ the term `arrangement' for multiarrangements.

Let $\AAA$ be a multiarrangement and let $H \in \AAA$
be a hyperplane.
The {\em deletion} $\AAA - H$ is the multiarrangement
obtained from $\AAA$ by removing one copy of $H$.
The {\em restriction}
$\AAA \mid H$ is the arrangement
\begin{equation}
    \AAA \mid H \coloneqq \{ H' \cap H \,:\, H' \in \AAA - \{H\},
    \, \, H' \cap H \neq \varnothing \}.
\end{equation}

We need some matroidal terminology for hyperplane arrangements. 
By recording the normal vectors for $\AAA$ as the columns of a matrix, we obtain the realizable matroid $\mathfrak{M}_{\AAA}$ corresponding to $\AAA$. Under this association, matroidal notions are inherited by $\AAA$.
The \emph{rank} of $\AAA$ is the rank of the matrix of normal vectors, or equivalently the dimension of the space spanned by the normal vectors of the hyperplanes composing $\AAA$.
Naturally, any subset of the hyperplanes can now be endowed with a rank.
A {\em basis}  of $\AAA$ is a subset $B \subset \AAA$ such that $\{ \alpha_H \,:\, H \in B \}$ forms a basis of the space spanned by
$\{ \alpha_H \,:\, H \in \AAA \}$.

If $\AAA$ is a linear multiarrangement, then a hyperplane $H \in \AAA$ is a 
 {\em coloop} if a linear form $\alpha_H$ with kernel $H$ does not lie in the span of the linear forms $\{ \alpha_{H'} \,:\, H' \in \AAA, \, \, H' \neq H \}$
of the other hyperplanes in $\AAA$.
A ``hyperplane"  $H \in \AAA$ is a {\em loop} if its linear form is $\alpha_H = 0$. Loops can arise from restrictions of loop-free multiarrangements.

    Given a matroid $\MMM$ on a ground set $\groundset$ with rank function $\mathsf{rk} :2^{\groundset}\to \mathbb{Z}_{\geq 0}$,  its \emph{Tutte polynomial} $T_{\MMM}(x,y)\in \mathbb{N}[x,y]$ is 
    \begin{align}\label{eq:defn_tutte}
        T_{\MMM}(x,y)=\sum_{A\subseteq \groundset} (x-1)^{r-\rank{A}} (y-1)^{|A|-\rank{A}},
    \end{align}
    where $r\coloneqq \rank{\MMM}$.
    Tutte \cite{Tut54} famously showed that this implies a combinatorial expansion in terms of \emph{internal} and \emph{external activities} as we range over the set $\mathcal{B}$ of all  bases of $\mathfrak{M}$:
    \begin{align}\label{eq:defn_tutte_activity}
    T_{\mathfrak{M}}(x,y)=\sum_{\text{bases } \mathcal{B}} x^{\intact(B)}y^{\extact(B)}.    
    \end{align}
    We postpone the formal definition of the notion of activity (and passivity) until Section \ref{sec:basis}.
    If the matroid $\MMM$ is obtained from an arrangement $\AAA$ as outlined earlier, we may talk about the Tutte polynomial $T_{\AAA}$ without any ambiguity.

\section{Superpower Ideals and Spanning 
Sets}\label{sec:spanning}

\subsection{Superpower ideals}
\label{subsec:superpower}

Given $(\ell_1,\dots,\ell_n)\in \CC^n\setminus {0}$, consider the line $L = \CC \cdot (\ell_1, \dots, \ell_n)$ in $\CC^n$ and let $\lambda_L$ be the linear form 
\begin{equation}
    \lambda_L \coloneqq \ell_1 x_1 + \cdots + \ell_n x_n \in \CC[\xx_n].
\end{equation}
The linear form $\lambda_L$ is defined up to a nonzero
scalar.

Let $\AAA=\{H_1,\dots,H_m\}$  be a multiarrangement of linear hyperplanes in $\CC^{n}$ and let $k \geq -1$
be an integer. 
Ardila--Postnikov \cite[Section 3.1]{AP10} defined
the ideal
$J_{\AAA,k} \subseteq \CC[\xx_n]$ by
\begin{align}\label{eq:power_ideals}
    J_{\AAA,k}\coloneqq \left (\lambda_L^{\rho_{\AAA}(L)+k}\suchthat L \subseteq \CC^n \text{ a line} \right)
\end{align}
where 
\begin{equation}
\rho_\AAA(L) \coloneqq \# \{ 1 \leq i \leq m \suchthat  L \not\subseteq H_i \}.
\end{equation}
The cases $k\in \{-1,0,1\}$ are of particular interest as the corresponding  quotients $\CC[\xx_n]/J_{\AAA,k}$ respectively give the \emph{internal}, \emph{central} and \emph{external zonotopal algebras}. 
One reason for this interest stems from the fact that the (singly-graded) Hilbert series of these algebras are obtained as  univariate specializations of the Tutte polynomial of $\mathfrak{M}_{\AAA}$.
More precisely if $r$ is the rank of $\AAA$ we have \cite{AP10, HR11}
\begin{align}\label{eq:hilb_classical_central}
    \Hilb(  \CC[\xx_n]/J_{\AAA,0};q) &= q^{m-r}  
    T_{\AAA}(1,q^{-1}),
    \\ \label{eq:hilb_classical_external}
    \Hilb(  \CC[\xx_n]/J_{\AAA,1};q) &= q^{m-r} T_{\AAA}\left(1+q,q^{-1}\right).
\end{align}
The following superspace ideals are our 
object of study.

\begin{definition}
    Let $\AAA$ be a multiarrangement in $\CC^n$.
    We let $I_\AAA \subseteq \Omega_n$ be the 
    differential closure of the ideal $J_{\AAA,1}$.
    In terms of generators, we have 
    \begin{equation}
    \label{eq:main_superspace_ideal_of_interest}
    I_\AAA \coloneqq  \left( \lambda_L^{\rho_\AAA(L) + 1}, d \lambda_L^{\rho_\AAA(L) + 1} \,:\, L \subseteq \CC^n \text{ a line}  \right) \subseteq \Omega_n.
    \end{equation}
    The ideal $I_\AAA$ is bihomogeneous.
    We write
    \begin{equation}
        \external_{\AAA}\coloneqq \Omega_n/I_{\AAA}
    \end{equation}
    for the associated bigraded quotient ring and
    \begin{equation}
        I_\AAA^\perp \subseteq \Omega_n
    \end{equation}
    for the bigraded subspace of $\Omega_n$ given by 
    the
    inverse system of $I_\AAA$.
\end{definition}

We will be interested in the bigraded Hilbert series
$\Hilb(\external_\AAA;q,t)$.
By setting the $\theta$-variables equal to zero,
we see from 
Equation~\eqref{eq:hilb_classical_external} that
\[
    \Hilb(\external_\AAA;q,0) =
    \Hilb(I_\AAA^\perp;q,0) = 
    \Hilb(\CC[\xx_n]/J_{\AAA,1};q) = 
    q^{m-r} T_{\AAA}\left(1+q,q^{-1}\right)   
\]
Less obviously (see Corollary~\ref{cor:hilb_cases}),
the top $t$-degree of 
$\Hilb(\external_\AAA;q,t)$ is a polynomial in $q$
coinciding with 
$\Hilb(\CC[\xx_n]/J_{\AAA,0};q)$.
Various features of $\external_\AAA$
interpolate between the external and central zonotopal
algebras.

\begin{example}\label{ex:first}
    Consider the hyperplane arrangement $\AAA$ in $\CC^2$ determined by the hyperplane $x_1-x_2=0$. For $L=\CC\cdot (a,b)$ with $a\neq b$, we know that $\rho_{\AAA}(L)=1$. If $a=b \neq 0$, on the other hand, then $\rho_{\AAA}(L)=0$. In turn, this means that the ideal $I_{\AAA}$ is generated by $(ax_1+bx_2)^2$ where $a\neq b$ are not both $0$, $x_1+x_2$, as well as their differentials. Equivalently we have
    \[
        I_{\AAA}=(x_1^2,x_2^2,x_1+x_2,\theta_1x_1,\theta_2x_2,\theta_1+\theta_2).
    \]
    It can be checked that a monomial basis for $\external_{\AAA}$ is given by $\{1,x_1,\theta_1\}$, and so $\Hilb(\external_{\AAA};q,t)=1+q+t$.
\end{example}

\begin{example}
\label{ex:second}
    Let us consider a slightly more involved example where $\AAA$ is determined by $x_1=0$ and $x_1-x_2=0$. One can show that
    \[
        I_{\AAA}=(x_1^3,x_2^2,(x_1+x_2)^2,\theta_1x_1^2,\theta_2x_2,(\theta_1+\theta_2)(x_1+x_2)).
    \]
    This time we get a monomial basis given by elements of 
    \[
    \{1,x_1,x_2,\theta_1,\theta_2,x_1^2,\theta_1x_1,\theta_1x_2,\theta_1\theta_2\},
    \]
    and therefore
    \[
    \Hilb(\external_{\AAA}) = 1+2q+2t+2qt+q^2+t^2=(1+q+t)^2.
    \]
    We invite the reader to check that a basis for $I_{\AAA}^{\perp}$ is given by 
    \[
    \{1, x_1, x_1-x_2, \theta_1, \theta_1-\theta_2, x_1(\theta_1-\theta_2), (x_1-x_2)\theta_1, x_1(x_1-x_2) ,\theta_1(\theta_1-\theta_2)\}.
    \]
    The careful reader may have noted the similarity between the expressions that appear in this list above and the defining hyperplanes of the arrangement $\AAA$.
\end{example}

We will approach the Hilbert series of $\external_{\AAA}$ by way of understanding $I_{\AAA}^{\perp}$.
  To this end, we first find a spanning set for $I_{\AAA}^{\perp}$.

\subsection{Derivative lemmata}
The ideal $I_\AAA$ is generated by certain powers
$(\lambda_L)^p$ of 
linear forms  corresponding to lines,
and their total derivatives 
$(\lambda_L)^{p-1} \cdot d \lambda_L$.
In order to understand the inverse system
$I_\AAA^\perp$, it is therefore important to understand
 the operators $\lambda_L \odot (-)$ and
$d \lambda_L \odot (-)$. 
The following simple lemma is the basis of our analysis.

\begin{lemma}
\label{linear-vanishing}
Let $H\subseteq \CC^n$ be a linear hyperplane determined by a homogeneous linear form 
$\alpha_H \coloneqq a_1 x_1+\cdots+ a_n x_n$.  
Consider a line $L = \CC \cdot (\ell_1, \dots, \ell_n)$ with associated linear form
$\lambda_L = \ell_1 x_1 + \cdots + \ell_n x_n$.
\begin{enumerate}[label=\textup{(}\arabic*\textup{)}]
\item \label{it1}  We have $(d \lambda_L) \odot \alpha_H = \lambda_L \odot (d \alpha_H) = 0$.
\item \label{it2} We have $\lambda_L \odot \alpha_H = (d \lambda_L) \odot (d \alpha_H) \in \CC$, and this complex number is zero if and only if $L \subseteq H$.
\end{enumerate}
\end{lemma}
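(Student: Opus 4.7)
The plan is to unravel the definitions of $\odot$, $d$, and the contraction operators, and then simply compute. Writing $\lambda_L = \sum_i \ell_i x_i$ and $\alpha_H = \sum_i a_i x_i$, the associated total derivatives are the fermionic linear forms
\[
    d\lambda_L = \sum_{i=1}^n \ell_i \theta_i,
    \qquad
    d\alpha_H = \sum_{i=1}^n a_i \theta_i,
\]
and the operators $\partial\lambda_L$, $\partial(d\lambda_L)$ on $\Omega_n$ are obtained from $\lambda_L$, $d\lambda_L$ by replacing each $x_i$ by $\partial_i$ and each $\theta_i$ by $\partial_i^\theta$. Thus $\partial\lambda_L = \sum_i \ell_i\partial_i$ and $\partial(d\lambda_L) = \sum_i \ell_i \partial_i^\theta$.

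For part \ref{it1}, I would observe that $\alpha_H$ has fermionic degree $0$ and $d\alpha_H$ has bosonic degree $0$. Since $\partial(d\lambda_L)$ lowers fermionic degree by one and does nothing to $\xx$-variables, we get
\[
    (d\lambda_L) \odot \alpha_H \;=\; \sum_{i=1}^n \ell_i\, \partial_i^\theta(\alpha_H) \;=\; 0,
\]
and symmetrically $\partial\lambda_L$ lowers bosonic degree and acts trivially on $\ttheta$-variables, giving $\lambda_L \odot (d\alpha_H) = \sum_i \ell_i\, \partial_i(d\alpha_H) = 0$.

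For part \ref{it2}, the same calculation yields
\[
    \lambda_L \odot \alpha_H \;=\; \sum_{i=1}^n \ell_i\, \partial_i(\alpha_H) \;=\; \sum_{i=1}^n \ell_i a_i,
\]
and by the analogous computation with $\partial_i^\theta(\theta_j)=\delta_{ij}$,
\[
    (d\lambda_L) \odot (d\alpha_H) \;=\; \sum_{i=1}^n \ell_i\, \partial_i^\theta(d\alpha_H) \;=\; \sum_{i=1}^n \ell_i a_i,
\]
so the two scalars coincide. This common value is precisely $\alpha_H(\ell_1,\dots,\ell_n)$, which vanishes if and only if the vector $(\ell_1,\dots,\ell_n)$ lies in $H$, i.e. if and only if $L \subseteq H$.

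There is no real obstacle here: the content of the lemma is just that in each of the four pairings a single sum $\sum_i \ell_i a_i$ arises or is killed for degree reasons. The only thing to be slightly careful about is sign bookkeeping in $\partial_i^\theta$, but since $d\alpha_H$ has fermionic degree one, no nontrivial sign ever appears.
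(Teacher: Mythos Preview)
Your proof is correct and follows essentially the same approach as the paper: part~\ref{it1} is dismissed by a degree argument, and part~\ref{it2} is the direct computation $\lambda_L \odot \alpha_H = (d\lambda_L)\odot(d\alpha_H) = \sum_i a_i\ell_i$, which vanishes exactly when $L\subseteq H$. Your write-up is simply more explicit where the paper is terse.
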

\begin{proof}
\ref{it1} is true for degree reasons. 
For \ref{it2}, and for the second, note that $L \subseteq H$ if and only if $a_1 \ell_1 + \cdots + a_n \ell_n = 0$ and  $\lambda_L \odot \alpha_H = (d \lambda_L) \odot (d \alpha_H) = a_1 \ell_1 + \cdots  + a_n \ell_n$. 
\end{proof}

Using Lemma~\ref{linear-vanishing} we can
record some superspace 
elements which lie in $I_\AAA^{\perp}$. The following 
proof makes use of the product rule for superspace
differentiation. If $\lambda \in \CC[\xx_n]_1$
is a homogeneous linear polynomial and $f, g$
are bihomogeneous superspace elements, we have
\begin{equation}
    \label{eq:product-rule}
    \begin{cases}
        \lambda \odot (f \cdot g) = 
        (\lambda \odot f) \cdot g + 
        f \cdot (\lambda \odot g) \\
        (d \lambda) \odot (f \cdot g) = 
        ((d \lambda) \odot f) \cdot g \pm
        f \cdot ((d \lambda) \odot g)
    \end{cases}
\end{equation}
where the $\pm$ in the second branch is the parity
of the fermionic degree of $f$.
 
 \begin{lemma}
 \label{perp-membership}
 Let $\alpha_1, \dots, \alpha_m \in \CC[\xx_n]$ be homogeneous linear forms defining the linear hyperplanes $H_1, \dots, H_m$.
Suppose we have a disjoint union decomposition $[m] = I \sqcup J$. For any subset $S \subseteq J$, the superspace element
\[
\prod_{i \in I} d \alpha_i \times \prod_{s \in S} \alpha_s
\]
lies in $I_\AAA^{\perp}$.
 \end{lemma}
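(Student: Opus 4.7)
The plan is to verify that $f \coloneqq \prod_{i \in I} d\alpha_i \cdot \prod_{s \in S} \alpha_s$ is annihilated by each generator of $I_\AAA$, namely $\lambda_L^{\rho_\AAA(L)+1}$ and $d(\lambda_L^{\rho_\AAA(L)+1})$ as $L$ ranges over all lines in $\CC^n$. This reduction suffices because the action satisfies $(h \cdot g) \odot f = \partial h\,(g \odot f)$, so any superspace multiple of an annihilating generator continues to annihilate $f$, which is exactly what is needed for membership in $I_\AAA^\perp$.

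Fix a line $L$, set $p \coloneqq \rho_\AAA(L)$, and write $c_j \coloneqq \lambda_L \odot \alpha_j$ for $j \in [m]$. By Lemma~\ref{linear-vanishing}\ref{it2}, $c_j = 0$ precisely when $L \subseteq H_j$, so by the very definition of $\rho_\AAA$ exactly $p$ of the $c_j$'s are nonzero. I decompose $f = A \cdot B$ with $A = \prod_{i \in I} d\alpha_i$ purely fermionic and $B = \prod_{s \in S} \alpha_s$ purely bosonic. Lemma~\ref{linear-vanishing}\ref{it1}, combined with the super Leibniz rule~\eqref{eq:product-rule}, lets the bosonic operator $\lambda_L$ slide through $A$ untouched (since $\lambda_L \odot d\alpha_i = 0$) and the fermionic operator $d\lambda_L$ slide through $B$ untouched (since $d\lambda_L \odot \alpha_s = 0$). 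This yields the factorizations
\[
\lambda_L^{p+1} \odot f = A \cdot (\lambda_L^{p+1} \odot B),
\qquad
(\lambda_L^p\, d\lambda_L) \odot f = (d\lambda_L \odot A)\cdot(\lambda_L^p \odot B).
\]

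Expanding via Leibniz, $\lambda_L^{p+1} \odot B$ becomes a sum indexed by subsets $T \subseteq S$ with $|T| = p+1$, each term carrying the scalar factor $\prod_{t \in T} c_t$. Since only $p$ of the $c_j$'s are nonzero and $|T| = p+1$, pigeonhole forces this product to vanish on every term. Likewise, $(d\lambda_L \odot A)(\lambda_L^p \odot B)$ expands as a sum of terms with scalar factor $c_i \prod_{t \in T} c_t$ where $i \in I$ and $T \subseteq S$ with $|T| = p$; since $I$ and $J$ (hence $I$ and $S$) are disjoint subsets of $[m]$, the set $\{i\} \sqcup T$ again has size $p+1$ and the same pigeonhole argument eliminates the term. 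Combined with the identity $d(\lambda_L^{p+1}) = (p+1)\lambda_L^p\, d\lambda_L$, this verifies both types of generator. The argument is essentially bookkeeping; the only conceptually delicate point is the clean bosonic/fermionic separation afforded by Lemma~\ref{linear-vanishing}\ref{it1}, and the signs arising from the fermionic Leibniz rule are immaterial because every resulting term is already zero.
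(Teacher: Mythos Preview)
Your proof is correct and follows essentially the same approach as the paper's own argument: both expand the action of $\lambda_L^{\rho+1}$ and $\lambda_L^{\rho}\, d\lambda_L$ via the product rule, reduce to scalar factors $c_j = \lambda_L \odot \alpha_j$, and observe that any contributing term involves $\rho+1$ such factors drawn from distinct indices in $[m]$, of which at most $\rho$ can be nonzero. The only cosmetic difference is that you first separate $f = A\cdot B$ into its purely fermionic and purely bosonic pieces before expanding, whereas the paper writes out the full double sum \eqref{eq1}--\eqref{eq2} directly; the substance of the pigeonhole step and the use of Lemma~\ref{linear-vanishing} are identical.
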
 
 \begin{proof}
 Consider a line $L = \CC \cdot (\ell_1, \dots, \ell_n)$ in $\CC^n$.
 Let $\rho\coloneqq \rho_\AAA(L)$ throughout this proof and let $f_{I,S}\coloneqq \prod_{i \in I} d \alpha_i \times \prod_{s \in S} \alpha_s$ denote the superspace element in question.
 We need to show that $\lambda_L^{\rho+1}$ and $d \lambda_L \times \lambda_L^{\rho}$ both annihilate $f_{I,S}$ 
 under the $\odot$-action.
 
 Applying the superspace analogue 
 \eqref{eq:product-rule}
 of the product rule and Lemma~\ref{linear-vanishing}\ref{it1}, 
 we calculate 
 \begin{align}
 \lambda_L^{\rho + 1} \odot f_{I,S} &= 
 \prod_{i \in I} d \alpha_i  \times 
 \sum_{\substack{T \subseteq S \\ |T| = \rho + 1}} \left( \prod_{t \in T} (\lambda_L \odot \alpha_t) \times \prod_{s \in S - T} \alpha_s \right), \label{eq1}
 \\
  \left( d\lambda_L \times \lambda_L^{\rho} \right) \odot f_{I,S}  &=  
  \left(
  \sum_{i_0 \in I}  \pm (d \lambda_L \odot d \alpha_{i_0})
  \prod_{i \in I - i_0} d \alpha_i \right) \times
  \left(
  \sum_{\substack{T \subseteq S \\ |T| = \rho}} 
   \left( \prod_{t \in T} (\lambda_L \odot \alpha_t) \times \prod_{s \in S - T} \alpha_s \right) \right). \label{eq2}
 \end{align}
 where the sign $\pm$ depends on $i_0$ and arises from permutation of the fermionic factors.

 On the right-hand side of Equation~\eqref{eq1}, since $|T|=\rho+1$, there is at least one hyperplane $H_t$ for $t\in T$ that contains $L$.
 Lemma~\ref{linear-vanishing}\ref{it2} then guarantees that at least one of the factors $(\lambda_L \odot \alpha_t)$ will vanish for every summand $T$ and we have $\lambda_L^{\rho+1}\odot f_{I,S}=0$ as desired.  
 
 We apply a similar analysis to the right-hand side of Equation~\eqref{eq2}.  
 For a fixed subset $T \subseteq S$, if none of the factors $\lambda_L \odot \alpha_t$ vanish, we know by Lemma~\ref{linear-vanishing}\ref{it2} that the hyperplanes $H_t$ for $t\in T$ are precisely the $\rho$ hyperplanes in our arrangement that do not contain the line $L$. But then this implies that for every $i_0 \in I$
 the line $L$ is contained in
 $H_{i_0}$ so that 
 $(d \lambda_L \odot d \alpha_{i_0}) = 0$. Thus we see that $(d\lambda_L\times\lambda_L^{\rho})\odot f_{I,S}=0$ as well, thereby concluding the proof.
 \end{proof}

 \subsection{The case of a coordinate arrangement}
 Going back to the second half of Example~\ref{ex:second}, note that the basis of $I_{\AAA}^{\perp}$ contains all elements of the form in Lemma~\ref{perp-membership}.
 In fact, the elements appearing in 
 Lemma~\ref{perp-membership} will always 
 span the vector space $I_\AAA^{\perp}$.
 The argument showing this will be inductive. To get 
 this induction off  the ground, 
 we consider the case where $\AAA$ is a multiarrangement consisting entirely of coordinate hyperplanes.
 
 \begin{lemma}
 \label{coordinate-case}
 Let $(a_1, \dots, a_p)$ be a list of $p$ positive integers satisfying $a_1 + \cdots + a_p = m$ where $p \leq n$ and let $\AAA$ 
 be the multiarrangement in $\CC^n$ consisting of $a_i$ copies of the coordinate hyperplane $x_i = 0$ for each $1 \leq  i \leq p$. Then
 $$
 I_\AAA = ( x_1^{a_1 + 1}, \dots, x_n^{a_n + 1}, x_1^{a_1} \theta_1, \dots, x_n^{a_n} \theta_n) 
 $$
 where we interpret $a_i = 0$ for $i > p$.
 Consequently, the Macaulay inverse $I_\AAA^{\perp}$ has basis
 consisting of monomials $x_1^{i_1} \cdots x_n^{i_n}  \theta_S$ where $S \subseteq [p]$ and additionally
 \begin{itemize}
 \item $i_j \leq a_j$, and
 \item $i_s < a_j$ for all $s \in S$.
 \end{itemize}
 \end{lemma}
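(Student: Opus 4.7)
Write $K := (x_1^{a_1+1}, \dots, x_n^{a_n+1}, x_1^{a_1}\theta_1, \dots, x_n^{a_n}\theta_n)$ for the candidate ideal, using the convention $a_i = 0$ for $i > p$. The plan is to verify that $I_\AAA = K$ by proving both inclusions directly, after which the monomial basis of $I_\AAA^\perp$ will follow from standard considerations for monomial ideals in $\Omega_n$.

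For the inclusion $K \subseteq I_\AAA$, I would simply plug in coordinate lines $L = \CC \cdot e_i$, for which $\lambda_L = x_i$ and $d\lambda_L = \theta_i$. A direct count gives $\rho_\AAA(L) = a_i$ in both the $i \leq p$ and $i > p$ cases (with the convention above), so the generators $\lambda_L^{\rho_\AAA(L)+1}$ and $d\lambda_L \cdot \lambda_L^{\rho_\AAA(L)}$ of $I_\AAA$ become exactly $x_i^{a_i+1}$ and $x_i^{a_i}\theta_i$. For the reverse inclusion, fix a line $L = \CC \cdot (\ell_1, \dots, \ell_n)$, set $S := \{i \leq p : \ell_i \neq 0\}$, and note that $\rho_\AAA(L) = \sum_{i \in S} a_i =: \rho$. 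Expand $\lambda_L^{\rho+1}$ via the multinomial theorem: any surviving monomial $x^\beta$ is supported on $\{j : \ell_j \neq 0\}$. If $\beta_j > 0$ for some $j > p$, then $x_j \in K$ (since $a_j = 0$) forces $x^\beta \in K$; otherwise $\beta$ is supported on $S$ with $|\beta| = \rho + 1 = \sum_{i \in S} a_i + 1$, so by pigeonhole some $i \in S$ has $\beta_i \geq a_i + 1$ and $x_i^{a_i+1}$ divides $x^\beta$. A parallel argument handles $d\lambda_L \cdot \lambda_L^\rho = \sum_i \ell_i \theta_i \cdot \lambda_L^\rho$: a typical surviving summand is $\theta_i x^\beta$ with $\ell_i \neq 0$ and $|\beta| = \rho$. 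If $i > p$ we use $\theta_i \in K$; if $\beta_j > 0$ for some $j > p$ we use $x_j \in K$; and if $i \in S$ with $\beta$ supported on $S$, then either $\beta_i \geq a_i$ (giving $x_i^{a_i}\theta_i \mid \theta_i x^\beta$) or $\sum_{j \in S\setminus\{i\}} \beta_j > \sum_{j \in S\setminus\{i\}} a_j$, forcing some $j \neq i$ in $S$ with $\beta_j \geq a_j + 1$. In every case the summand lands in $K$.

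Having identified $I_\AAA$ with the monomial ideal $K$, the basis of $I_\AAA^\perp$ follows by a standard computation: a bihomogeneous monomial $x^\beta \theta_T$ is annihilated under $\odot$ by a generator $x_j^{a_j+1}$ exactly when $\beta_j \leq a_j$, and by $x_j^{a_j}\theta_j$ exactly when either $\beta_j < a_j$ or $j \notin T$. Intersecting over all generators gives the conditions $\beta_j \leq a_j$ for every $j$ (forcing $T \subseteq [p]$ since $a_j = 0$ for $j > p$ would otherwise exclude $\theta_j$) and $\beta_s < a_s$ for every $s \in T$, which is the monomial basis claimed in the statement. The main obstacle throughout is the pigeonhole/case analysis for the fermionic generators $d\lambda_L \cdot \lambda_L^\rho$, where one must carefully track which index supplies the $\theta$-factor relative to the support of $\beta$; the rest is bookkeeping.
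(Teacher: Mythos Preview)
Your argument is correct and follows essentially the same approach as the paper: both establish $K \subseteq I_\AAA$ via coordinate lines and the reverse inclusion via a multinomial expansion plus pigeonhole, with the core idea that a monomial of total degree $\rho+1$ supported on $S$ must be divisible by some $x_i^{a_i+1}$. The only organizational difference is that the paper first reduces to $p=n$ (using $x_j,\theta_j\in I_\AAA$ for $j>p$) and then argues on the nose, whereas you keep $p<n$ and handle the $j>p$ indices as a separate case inside the expansion; you also spell out the Macaulay inverse computation that the paper leaves to the reader.
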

 
 \begin{proof}
 We start by reducing to the case $p = n$.  If $p < n$, for any $p + 1 \leq i  \leq n$ the line $L_i = \CC \cdot e_i$ is contained in each hyperplane of $\AAA$. Here $e_i$ denotes the standard basis vector of $\CC^n$ with $1$ in the $i$th coordinates and zeros everywhere else.
 Thus for each such $L_i$ we have $\rho_{\AAA}(L_i)=0$, allowing us to conclude that 
 \begin{align}
 \label{eq:tail_in_ideal}
     x_i, \theta_i \in I_\AAA \text{ for } p+1\leq i\leq n.
 \end{align}

 Now consider any line $L = \CC \cdot (\ell_1, \dots, \ell_n)$ and, like before, set $\rho=\rho_{\AAA}(L)$. 
 From~\eqref{eq:tail_in_ideal} we have the equalities modulo $I_\AAA$:
 \begin{align}
 \label{coordinate-case-one}
 \lambda_L^{\rho + 1} &= ( \ell_1 x_1 + \cdots + \ell_p x_p)^{\rho + 1}
 \\\label{coordinate-case-two}
  d\lambda_L\times \lambda_L^{\rho}
  &= ( \ell_1 x_1 + \cdots + \ell_p x_p)^{\rho} \times (\ell_1 \theta_1 + \cdots + \ell_p \theta_p).
 \end{align}
 Furthermore, if $L' = \CC \cdot (\ell_1, \dots, \ell_p, 0, \dots, 0)$ then we have $ \rho_\AAA(L')$ also equals $\rho$.\footnote{If $\ell_1 = \cdots = \ell_p = 0$, then
 both \eqref{coordinate-case-one} and \eqref{coordinate-case-two} lie in $I_\AAA$.}
 We may therefore assume that $p = n$ going forward.
 
 The elements in the proposed generating set of $I_\AAA$ are precisely the generators of $I_\AAA$ coming from the lines $\CC\cdot e_i$ given by the $n$ coordinate axes 
 of $\CC^n$, so the containment 
 $$ ( x_1^{a_1 + 1}, \dots, x_n^{a_n + 1}, x_1^{a_1} \theta_1, \dots, x_n^{a_n} \theta_n)\subseteq I_{\AAA}$$
 is clear.
 
 For the reverse containment, let 
 $L = \CC \cdot (\ell_1, \dots, \ell_n)$ be an arbitrary line in $\CC^n$ and let $S = \{ 1 \leq i \leq n \,:\, \ell_i \neq 0 \}$.
 Then $\rho = \rho_{\AAA}(L) = \sum_{i \in S} a_i$
 and we have the generator
 $\lambda_L^{\rho + 1}\in I_\AAA$.  
 Expanding the latter into monomials, we see that
 \begin{equation}
 \lambda_L^{\rho + 1} = 
  \lambda_L^{1 + \sum_{i \in S} a_i}=\left( \sum_{i\in S}\ell_ix_i\right)^{1 + \sum_{i \in S} a_i} \in ( x_i^{a_i + 1} \,:\, i \in S ).
 \end{equation}
 Indeed, any monomial $\prod_{i\in S}x_i^{m_i}$ appearing with nonzero coefficient has degree $1 + \sum_{i \in S} a_i$, which means that at least one $m_i$ satisfies $m_i\geq 1+a_i$.

 Similarly, expanding the product
  $\lambda_L^{\rho} \times d\lambda_L$ into monomials 
  leads to 
  \begin{align*}
    \left( \sum_{i \in S} \ell_i x_i \right)^{\sum_{i \in S} a_i} \times \left( \sum_{i \in S} \ell_i \theta_i \right)
    \in
    \left( x_i^{a_i} \theta_i \,:\, i \in S  \right) + \left( x_i^{a_i + 1} \,:\, i \in S \right)  
  \end{align*} 
  where we used that $a_i > 0$ for all $i$. Indeed, this time monomials with nonzero coefficient are of the form $\theta_j\prod_{i\in S}x_i^{m_i}$ for some $j\in S$, with bosonic degree $\sum_{i\in S}a_i$. If no $m_i$ satisfies $m_i\geq 1+a_i$, then we necessarily must be in the situation $m_i=a_i$ for all $i\in S$. But then any such monomial is in the ideal generated by $x_j^{a_j}\theta_j$.
  This completes the proof of the reverse containment.
  
 Given our presentation of $I_\AAA$, it is not hard to see that $I^{\perp}_\AAA$ has the required basis.
 We leave the details to the reader.
 \end{proof}

\subsection{A spanning set via an exact sequence}
\label{subsec:spanning}
 
 Let us return to the case of an arbitrary arrangement $\AAA = \{ H_1, H_2, \dots, H_m \}$ of $m$ linear hyperplanes in $\CC^n$.
 Changing coordinates if necessary, \emph{we may assume that $H_1$ is given by $x_1 = 0$ and we will work under this assumption for the remainder for this section.}
 
 Let $\AAA - H_1 = \{ H_2, \dots, H_m \}$ and $\AAA \mid H_1 = \{ H_j \cap H_1 \,:\, j \geq 2 \}$ be the deletion and restriction of $\AAA$ with respect to $H_1$.
We propose to define an exact sequence 
 \begin{equation}
 \label{exact-sequence}
 0 \rightarrow I^{\perp}_{\AAA - H_1}  \xrightarrow{ \, \, \varphi \, \, } I^{\perp}_\AAA \xrightarrow{ \, \, \psi \, \,} I^{\perp}_{\AAA \mid H_1} \oplus I^{\perp}_{\AAA \mid H_1} \rightarrow 0
 \end{equation}
 where
 \begin{itemize}
 \item the map $\varphi: I^{\perp}_{\AAA - H_1}  \rightarrow I^{\perp}_\AAA$ is given by $\varphi(f) \coloneqq x_1 \cdot f$, and
 \item the map $\psi: I^{\perp}_\AAA \rightarrow I^{\perp}_{\AAA \mid H_1} \oplus I^{\perp}_{\AAA \mid H_1}$ is given by 
 \begin{align}\label{eq:non_standard}
 \psi(f) = ( f \mid_{x_1, \theta_1 \, = \, 0}, \theta_1 \odot f \mid_{x_1 \, = \, 0} ).
 \end{align}
 \end{itemize}
 In the second coordinate of $\psi$, 
 the notation
 $\theta_1 \odot f \mid_{x_1 \, = \, 0}$
 is justified because differentiation
 $\theta_1 \odot (-)$ with respect
 to $\theta_1$ commutes with 
 evaluation $(-) \mid_{x_1 \, =\,  0}$
 at $x_1 = 0$.
 
At this stage, it is not even obvious that the 
maps $\varphi, \psi$ involved in
\eqref{exact-sequence} are well-defined; we will prove this in Lemma \ref{lem:well_defined}.
Granting the definedness and exactness 
of \eqref{exact-sequence} for the moment, observe that 
$\varphi$ increases bosonic degree by 1 whereas $\psi$
preserves degrees in the first component while reducing
fermionic degree by 1 in the second component.
Said differently, the sequence \eqref{exact-sequence}
gives rise to a homogeneous exact sequence
of bigraded $\CC$-vector spaces
\begin{equation}
    0 \longrightarrow 
    I^\perp_{\AAA - H_1}(-1,0) \longrightarrow
    I^\perp_\AAA \longrightarrow 
    I^\perp_{\AAA \mid H_1}(0,-1) \oplus I^\perp_{\AAA \mid H_1} \longrightarrow 0
\end{equation}
where $I^\perp_{\AAA - H_1}(-1,0)$ is 
$I^\perp_{\AAA - H_1}$ with bosonic degree increased by 1
and $I^\perp_{\AAA \mid H_1}(0,-1)$ is 
$I^\perp_{\AAA \mid H_1}$ with fermionic degree
increased by 1. In terms of bigraded Hilbert series, this
implies
\begin{equation}
\label{eq:technical-hilbert-series-recursion}
    \Hilb(\external_\AAA;q,t) = q \cdot 
    \Hilb(\external_{\AAA - H_1};q,t)
    + (1+t) \cdot \Hilb(\external_{\AAA \mid H_1};q,t).
\end{equation}
We use
Equation~\eqref{eq:technical-hilbert-series-recursion}
to relate the Hilbert series of 
$\external_\AAA$ to the Tutte polynomial of $\AAA$.

We remark that exact sequences analogous to
\eqref{exact-sequence} have appeared in the purely
bosonic context before.
For example, the sequence \eqref{exact-sequence} should 
be compared with the sequence
appearing in the proof of \cite[Prop. 4.4]{AP18}.
The key difference in the superspace context is the 
presence of 
$I_{\AAA \mid H_1}^\perp(0,-1) \oplus I_{\AAA \mid H_1}^\perp$ 
instead of just $I_{\AAA \mid H_1}^\perp$ on the right.
Before establishing the exactness of 
\eqref{exact-sequence}, let us consider an example.
\begin{example}
    Let $\AAA$ be the multiarrangement in $\CC^2$ determined by two copies of the hyperplane $H$ given by $x_1=0$. 
    The reader may check that a basis for $I_{\AAA}^{\perp}$ is given by 
    \[
    \{1,x_1,x_1^2,\theta_1,\theta_1x_1\}.
    \]
    The arrangement $\AAA-H$ is determined by a single copy of $H$ and in this case we have a basis for $I_{\AAA-H_1}^{\perp}$ given by $\{1,x_1,\theta_1\}$. Multiplying each element by $x_1$ produces the subset $\{x_1,x_1^2,x_1\theta_1\}$ of the basis for $I_{\AAA}^{\perp}$.
    Restricting $\AAA$ to $H$ produces a `degenerate' hyperplane which may be deleted. Thus we have that $I_{\AAA\mid H}^{\perp}=\CC\{1\}$. Finally, observe that the only basis element that survives the specialization $x_1=0$ and $\theta_1=0$ is $1$, and the unique element that that survives the $\odot$ action of $\theta_1$ and then the specialization $x_1=0$ is $\theta_1$ itself. Thus we see in this simple scenario how the basis elements of $I_{\AAA}^{\perp}$, $I_{\AAA-H}^{\perp}$, and $I_{\AAA\mid H}^{\perp}$ interact with the maps $\varphi$ and $\psi$. 
\end{example}
A more thorough and intricate analysis along these lines shall be performed in Section~\ref{sec:basis}.
For the moment, we undertake a finer investigation of the maps $\varphi$ and $\psi$.

 \begin{lemma}
 \label{lem:well_defined}
     The maps $\varphi$ and $\psi$ are well-defined.
 \end{lemma}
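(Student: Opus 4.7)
The plan is to verify each map by checking directly that the claimed image annihilates every generator of the target superpower ideal, using the product rule~\eqref{eq:product-rule}, the identity $(fg) \odot h = f \odot (g \odot h)$ for polynomial $f, g$, and the commutation relations between multiplication by $x_1$, evaluation at $x_1 = \theta_1 = 0$, the derivations $\partial_{\lambda_L}$, $\partial_{d\lambda_L}$, and the contraction $\partial_1^\theta$. Recall that $H_1$ is the hyperplane $x_1 = 0$, so that $\lambda_L$ involves $x_1$ (and correspondingly $d\lambda_L$ involves $\theta_1$) precisely when $L \not\subseteq H_1$.

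For $\varphi$, fix $f \in I^\perp_{\AAA - H_1}$ and a line $L \subseteq \CC^n$, and split into cases according to whether $L \subseteq H_1$. If $L \subseteq H_1$ then $\ell_1 = 0$, so both $\partial_{\lambda_L}$ and $\partial_{d\lambda_L}$ commute with multiplication by $x_1$; combined with the fact that $\rho_{\AAA}(L) = \rho_{\AAA - H_1}(L)$ in this case, the two generators of $I_\AAA$ indexed by $L$ annihilate $x_1 \cdot f$. If $L \not\subseteq H_1$, then $\rho_\AAA(L) = \rho_{\AAA - H_1}(L) + 1$. Writing $\sigma = \rho_{\AAA - H_1}(L)$, an easy induction on $k$ using the product rule yields $\partial_{\lambda_L}^k(x_1 g) = k \ell_1 \partial_{\lambda_L}^{k-1} g + x_1 \partial_{\lambda_L}^k g$ for any $g$. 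Applying this with $k = \sigma + 2$ to $g = f$ and then with $k = \sigma + 1$ to $g = \partial_{d\lambda_L} f$ (noting that $\partial_{d\lambda_L}$ commutes with multiplication by $x_1$), and invoking $\partial_{\lambda_L}^{\sigma+1} f = 0$ and $\partial_{\lambda_L}^\sigma \partial_{d\lambda_L} f = 0$, shows that both generators of $I_\AAA$ indexed by $L$ kill $x_1 f$.

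For $\psi$, fix $f \in I^\perp_\AAA$. The generators of $I_{\AAA \mid H_1}$ are indexed by lines $L' \subseteq H_1$, and for such a line one has $\rho_{\AAA \mid H_1}(L') = \rho_\AAA(L')$. The crucial observation is that for $L' \subseteq H_1$ both $\lambda_{L'}$ and $d\lambda_{L'}$ are free of $x_1$ and $\theta_1$; hence the operators $\partial_{\lambda_{L'}}$ and $\partial_{d\lambda_{L'}}$ commute with the evaluation $(-)|_{x_1 = \theta_1 = 0}$, commute with $(-)|_{x_1 = 0}$, commute with $\partial_1^\theta$ in the bosonic case, and anticommute with $\partial_1^\theta$ in the fermionic case. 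Starting from the given identities $\lambda_{L'}^{\rho+1} \odot f = 0$ and $(\lambda_{L'}^\rho d\lambda_{L'}) \odot f = 0$ in $\Omega_n$, one may slide the restriction $x_1 = \theta_1 = 0$ (respectively, $\partial_1^\theta$ followed by $x_1 = 0$) past these operators to conclude the vanishing of both components of $\psi(f)$ against each generator of $I_{\AAA \mid H_1}$.

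The main technical points to track carefully will be (i) handling the sign that appears when $\partial_1^\theta$ passes through $\partial_{d\lambda_{L'}} = \sum_{i \geq 2} \ell_i \partial_i^\theta$ in the second component of $\psi$, and (ii) the bookkeeping showing that when $L \not\subseteq H_1$ the extra factor $\ell_1$ produced by the product rule does not obstruct the vanishing, since the relevant partial derivatives $\partial_{\lambda_L}^{\sigma+1} f$ and $\partial_{\lambda_L}^\sigma \partial_{d\lambda_L} f$ are already zero by hypothesis. Neither poses a genuine obstacle, so well-definedness follows.
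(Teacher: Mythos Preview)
Your proposal is correct and follows essentially the same approach as the paper: both arguments split the verification of $\varphi$ according to whether $L \subseteq H_1$ and use the product rule together with the relation $\rho_\AAA(L) = \rho_{\AAA-H_1}(L) + \mathbf{1}_{L \not\subseteq H_1}$, and both verify $\psi$ by exploiting that $\lambda_{L'}$ and $d\lambda_{L'}$ do not involve $x_1,\theta_1$ when $L' \subseteq H_1$. Your phrasing via commutation relations is somewhat cleaner than the paper's, which instead writes out an explicit expansion $f = \sum_i (x_1^i g_i + x_1^i \theta_1 h_i)$ for $\psi$ and, in the $L \subseteq H_1$ case for $\varphi$, splits further into $\rho_\AAA(L)=0$ versus $\rho_\AAA(L)>0$; your observation that $\partial_{\lambda_L}$ and $\partial_{d\lambda_L}$ commute with multiplication by $x_1$ when $\ell_1 = 0$ bypasses that case split entirely.
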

 \begin{proof}
We begin by arguing that $\varphi$ is well-defined.  Thus we would like to show that for $f\in I_{\AAA-H_1}^{\perp}$ we must have $x_1\cdot f \in I_{\AAA}^{\perp}$.
Equivalently, we want to show that $x_1\cdot f$ is annihilated by $\lambda_{L}^{\rho_{\AAA}(L)+1}$ and $d\lambda_L\times \lambda_{L}^{\rho_{\AAA}(L)}$ under the $\odot$ action for any line $L\subseteq \CC^n$.

The product rule 
\eqref{eq:product-rule}
implies that for all $p>0$ and all elements $f \in \Omega_n$, we have
\begin{align}
    \label{line-leibniz1}
    \lambda_L^p \odot (x_1 \cdot f) &= (\lambda_L \odot x_1) \cdot (\lambda_L^{p-1} \odot f) + x_1 \cdot ( \lambda_L^p \odot f), \\
 d \lambda_L \odot (x_1\cdot  f) &= (d \lambda_L \odot x_1) \cdot f + x_1 \cdot (d \lambda_L \odot f)= x_1 \cdot (d \lambda_L \odot f)\label{line-leibniz2}.
\end{align}
For the second equality in~\eqref{line-leibniz2} we 
used Lemma~\ref{linear-vanishing}\ref{it1}.

Note that for any line $L \subseteq \CC^n$, we have
\begin{equation}\label{eq:rho_upon_deletion}
\rho_\AAA(L) = \begin{cases}
\rho_{\AAA - H_1}(L) & L \subseteq H_1, \\
\rho_{\AAA - H_1}(L)  + 1 & L \not\subseteq H_1.
\end{cases}
\end{equation}
We take cases based on this equality.

 Suppose $L\subseteq H_1$.
 Let us take $p=\rho_{\AAA}(L)+1$ in~\eqref{line-leibniz1}. 
 By Lemma~\ref{linear-vanishing}\ref{it2} we have $\lambda_L \odot x_1 = 0$.
 Additionally, since $f\in I_{\AAA-H_1}^{\perp}$, we know that $\lambda_L^{\rho_{A-H_1}(L) + 1}\odot f=0$. Thus both summands on the right-hand side of~\eqref{line-leibniz1} vanish, so that 
 $\lambda_L^{\rho_{\AAA}(L)+1}\odot (x_1\cdot f)=0$.
 As for $(\lambda_L^{\rho_{\AAA}(L)}\times d\lambda_L) \odot (x_1\cdot f)$, 
 Equation~\eqref{line-leibniz2} implies 
 \begin{equation}
 \label{eq:thing-we-want-to-kill}
     (\lambda_L^{\rho_{\AAA}(L)}\times d\lambda_L) \odot (x_1\cdot f) = 
     \lambda_L^{\rho_{\AAA}(L)} \odot (x_1 \cdot (d \lambda_L \odot f)).
 \end{equation}
 If $\rho_{\AAA}(L)=0=\rho_{\AAA-H_1}(L)$, 
 we have $d \lambda_L \in I_{\AAA - H_1}$
 and because $f \in I_{\AAA - H_1}^\perp$
 we know that $d\lambda_L \odot f=0$ and
 \eqref{eq:thing-we-want-to-kill} vanishes.
 If $\rho_{\AAA}(L)>0$ on the other hand, then~\eqref{line-leibniz1} with $p=\rho_{\AAA}(L)$ and $d\lambda_L\odot f$ in place of $f$ again results in the right-hand side
 of \eqref{eq:thing-we-want-to-kill}
 equaling $0$ since $d \lambda_L \cdot \lambda_L^p \in I_{\AAA - H_1}$.

 Now assume $L\not\subseteq H_1$.
 If $p=\rho_{\AAA}(L)+1=\rho_{\AAA-H_1}(L)+2$,
 we have $\lambda_L^{p-1} \in I_{\AAA - H_1}$
 and because $f \in I_{\AAA - H_1}^\perp$
 we have $\lambda_L^{p-1}\odot f=\lambda_L^p\odot f=0$.
 Equation~\eqref{line-leibniz1} therefore reads
 \begin{multline}
     \lambda_L^{\rho_\AAA(L) + 1} \odot (x_1 \cdot f) =
     (\lambda_L \odot x_1) \cdot 
     (\lambda_L^{\rho_{\AAA - H_1}(L) + 1} \odot f) +
     x_1 \cdot (\lambda_L^{\rho_{\AAA - H_1} + 2} \odot f)
    \\ = (\lambda_L \odot x_1) \cdot 0 + x_1 \cdot 0 = 0.
 \end{multline}
 As for $(\lambda_L^{\rho_{\AAA}(L)}\times d\lambda_L) \odot (x_1\cdot f)$, Equation~\eqref{line-leibniz2} implies
 \begin{equation}
     \label{eq:other-thing-we-want-to-kill}
     (\lambda_L^{\rho_{\AAA}(L)}\times d\lambda_L) \odot (x_1\cdot f) = 
     \lambda_L^{\rho_{\AAA}(L)} \odot (x_1 \cdot (d \lambda_L \odot f)).
 \end{equation}
 Setting $p=\rho_{\AAA}(L)=\rho_{\AAA-H_1}(L)+1$ in~\eqref{line-leibniz1} with $d\lambda_L\odot f$ in place of $f$ like before results in
 \begin{multline}
     \lambda_L^{\rho_\AAA(L)} \odot
     (x_1 \cdot (d \lambda_L \odot f)) =
     (\lambda_L \odot x_1) \cdot 
     ((\lambda_L^{\rho_{\AAA - H_1}(L)} \times d \lambda_L) 
     \odot f) + 
     x_1 \cdot 
     ((\lambda_L^{\rho_{\AAA - H_1}(L) + 1} \times d \lambda_L) 
     \odot f) \\
     = (\lambda_L \odot x_1) \cdot 0 + x_1 \cdot 0 = 0
 \end{multline}
 where we used 
 \begin{equation}
 (\lambda_L^{\rho_{A-H_1}(L)}\times d\lambda_L)\odot f=(\lambda_L^{\rho_{A-H_1}(L)+1}\times d\lambda_L)\odot f=0 
 \end{equation}
 as $f\in I_{\AAA-H_1}^{\perp}$.
We thus conclude that $x_1 \cdot f \in I^{\perp}_{\AAA}$ whenever $f \in I^{\perp}_{\AAA - H_1}$ and that the map $\varphi$ is well-defined.  

\smallskip
 
 Now we check that $\psi$ is well-defined.  
 Given $f \in \Omega_n$, we  have a unique
 finite expression of the form
 \begin{align}
 \label{eq:f_expansion}
 f = \sum_{i  \geq  0}  \left( x_1^i \cdot g_i + x_1^i \theta_1 \cdot h_i  \right)  
 \end{align}
 where $g_i, h_i \in \Omega_n$ are superspace elements which do not involve $x_1$ or $\theta_1$.  We would like to show that 
 if $f \in I_\AAA^\perp$ then $g_0, h_0 \in I_{\AAA \mid H_1}^\perp$.
 This will establish that the map
 $\psi: f \mapsto (g_0, h_0)$
  is a well-defined function
   $I_\AAA^\perp \rightarrow 
   I_{\AAA \mid H_1}^\perp \oplus I_{\AAA \mid H_1}^\perp$.

 Let $f \in I_\AAA^\perp$ and let 
 $L \subseteq H_1$ be a line contained in the hyperplane
 $H_1$.
 Then $\lambda_L^{\rho_\AAA(L) + 1}$ 
 and $d \lambda_L \times \lambda_L^{\rho_\AAA(L)}$
 are generators
 of $I_\AAA$ so that 
 \begin{equation}
 \label{eq:basic-perp-relationship}
     \lambda_L^{\rho_\AAA(L)+1}\odot f = 
     (d \lambda_L \times \lambda_L^{\rho_\AAA(L)}) \odot f = 0
 \end{equation}
 as $f \in I_\AAA^\perp$.
 Furthermore, since $L \subseteq H_1$
 neither $\lambda_L$ nor $d \lambda_L$ involve $x_1$ 
 or $\theta_1$ so that 
$\lambda_L \odot x_1 = (d \lambda_L) \odot \theta_1 = 0.$
 Combining Equations~\eqref{eq:f_expansion} and
 \eqref{eq:basic-perp-relationship} yields
\begin{align}\label{eq:vanishing_1}
 0 =   \lambda_L^{\rho_\AAA(L) + 1} \odot f = 
\sum_{i \geq 0}  \left( x_1^i \cdot (  \lambda_L^{\rho_\AAA(L) + 1} \odot g_i) + x_1^i \theta_1 \cdot  (\lambda_L^{\rho_\AAA(L) + 1} \odot h_i)  \right),
\end{align}
and similarly 
 \begin{align}\label{eq:vanishing_2}
 0 = 
 \sum_{i \geq 0}  \left( x_1^i \cdot  \left( \left( \lambda_L^{\rho_\AAA(L)}  \times  d \lambda_L \right) \odot g_i\right) + x_1^i \theta_1 \cdot \left( \left( \lambda_L^{\rho_\AAA(L)}  \times d \lambda_L \right)
 \odot h_i \right)  \right).
 \end{align}
 Extracting the coefficients of $x_1^0$ and $x_1^0 \theta_1$ in Equations~\eqref{eq:vanishing_1}  and~\eqref{eq:vanishing_2} gives
 \begin{equation*}
 \lambda_L^{\rho + 1}  \odot g_0 = \lambda_L^{\rho + 1} \odot h_0 = 
\left( \lambda_L^{\rho} \times d \lambda_L \right) \odot g_0 = 
\left( \lambda_L^{\rho} \times d \lambda_L \right) \odot h_0 = 0.
 \end{equation*}
 Since $L \subseteq H_1$ we have $\rho_\AAA(L) = \rho_{\AAA \mid H_1}(L)$ and we conclude that 
 $g_0, h_0 \in I^\perp_{\AAA \mid H_1}$.
 The function $\psi: f \mapsto (g_0, h_0)$ therefore
  a well-defined map
  $\psi: I_\AAA^\perp \rightarrow I_{\AAA \mid H_1}^\perp \oplus I_{\AAA \mid H_1}^{\perp}$.
 \end{proof}
 
Our next task is to show that the sequence of maps 
\eqref{exact-sequence}
is exact. Before doing so, we record a simple lemma
about derivatives.

 \begin{lemma}
     \label{derivative-implication-lemma}
     Let $L \subseteq \CC^n$ be a line, let $f \in \Omega_n$
     be a superspace element, and let 
     $\alpha = a_1 x_1 + \cdots + a_n x_n$
     be a nonzero linear form in $\CC[\xx_n].$
     Suppose that $\lambda_L^p \odot (\alpha \cdot f) = 0$
     for some $p \geq 0$. Then $\lambda_L^p \odot f = 0$.
 \end{lemma}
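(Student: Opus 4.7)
The plan is to exploit the Heisenberg-type commutation between multiplication by $\alpha$ and the derivation $D \coloneqq \lambda_L \odot (-)$ on $\Omega_n$. Set $c \coloneqq \lambda_L \odot \alpha \in \CC$, a scalar since both forms are linear. The product rule \eqref{eq:product-rule} gives $D(\alpha \cdot g) = c \cdot g + \alpha \cdot D(g)$ for all $g \in \Omega_n$, so $D$ and multiplication by $\alpha$ satisfy $[D, \alpha \cdot] = c \cdot \mathrm{id}$. A straightforward iteration then yields
\begin{equation*}
\lambda_L^q \odot (\alpha \cdot g) \;=\; \alpha \cdot (\lambda_L^q \odot g) \;+\; qc \cdot (\lambda_L^{q-1} \odot g) \qquad (g \in \Omega_n,\; q \geq 1).
\end{equation*}
Specializing to $g = f$ and $q = p$ and invoking the hypothesis reduces the whole question to the single identity $\alpha \cdot (\lambda_L^p \odot f) = -pc \cdot (\lambda_L^{p-1} \odot f)$.

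If either $c = 0$ (equivalently $L \subseteq \ker \alpha$) or $p = 0$, the right-hand side vanishes, so $\alpha \cdot (\lambda_L^p \odot f) = 0$. Since $\Omega_n$ is a free $\CC[\xx_n]$-module (on the $\theta$-monomials $\theta_J$), multiplication by the nonzero polynomial $\alpha$ is injective on $\Omega_n$, and we conclude $\lambda_L^p \odot f = 0$ immediately.

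The main obstacle is the remaining case $c \neq 0$ and $p \geq 1$, in which the single identity above is insufficient because its right-hand side is not obviously zero. My plan here is to upgrade the one identity into an infinite family: since $\lambda_L^p \odot (\alpha f) = 0$, applying $D^k$ gives $\lambda_L^{p+k} \odot (\alpha f) = 0$ for every $k \geq 0$, so the iterated product rule furnishes
\begin{equation*}
\alpha \cdot (\lambda_L^{p+k} \odot f) \;=\; -(p+k)c \cdot (\lambda_L^{p+k-1} \odot f) \qquad (k \geq 0).
\end{equation*}
Because $f$ has finite bosonic degree, $\lambda_L^N \odot f = 0$ for all sufficiently large $N$; fix such an $N = p + K$. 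A descending induction on $k$ closes the argument: whenever $\lambda_L^{p+k} \odot f = 0$ with $k \geq 1$, the displayed identity together with injectivity of multiplication by $\alpha$ forces $(p+k) c \cdot (\lambda_L^{p+k-1} \odot f) = 0$, and the assumptions $p+k \geq 1$ and $c \neq 0$ then give $\lambda_L^{p+k-1} \odot f = 0$. Iterating the descent from $k = K$ down to $k = 1$ delivers $\lambda_L^p \odot f = 0$, as required.
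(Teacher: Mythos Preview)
Your argument is correct. It is, however, a genuinely different route from the paper's. The paper's proof changes coordinates so that $\lambda_L = x_1$, expands $g = \sum_{I} g_I \cdot \theta_I$ with $g_I \in \CC[\xx_n]$, and observes that $x_1^p \odot g = 0$ is equivalent to $\deg_{x_1}(g_I) < p$ for all $I$; since multiplying by the nonzero form $\alpha$ cannot lower $x_1$-degree, the conclusion is immediate. This is a one-line degree argument after a coordinate change.

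Your approach instead exploits the Heisenberg commutation $[D,\alpha\cdot\,] = c\cdot\mathrm{id}$ to obtain the iterated identity $D^q(\alpha g) = \alpha\,D^q g + qc\,D^{q-1}g$, and then runs a descending induction from a large exponent where $D^N f = 0$ by finiteness of bosonic degree. This is coordinate-free and makes the role of the constant $c = \lambda_L \odot \alpha$ transparent, at the cost of a slightly longer case split and an inductive descent. The paper's version is shorter and more elementary; yours is more structural and would transport more readily to settings where a convenient coordinate change is unavailable.
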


 \begin{proof}
     Changing coordinates if necessary, we may assume 
     $\lambda_L = x_1$.
     Given a superspace element $g \in \Omega_n$, we may write
     $g$ uniquely as a sum
      $g = \sum_{I \subseteq [n]} g_I \cdot \theta_I$
     for polynomials $g_I \in \CC[\xx_n]$.
     The assertion that $x_1^p \odot g = 0$ is equivalent 
     to the assertion that the $x_1$-degree of $g_I$ is 
     $< p$ for all $I \subseteq [n]$. 
     Since the $x_1$-degree of $\alpha \cdot g_I$ is $\geq$
     the $x_1$-degree of $g_I$ for all $I$, the lemma follows.
 \end{proof}

 With Lemma~\ref{derivative-implication-lemma} in hand, we 
 are ready to establish the exactness of 
 \eqref{exact-sequence} using an inductive argument.
 Intertwined with this induction, we establish an
 explicit spanning set for $I_\AAA^{\perp}$.
 
 \begin{lemma}
 \label{exact-spanning-lemma}
 The following statements hold:
 \begin{enumerate}[label=\textup{(}\arabic*\textup{)}]
 \item \label{it3}
 The set of polynomials in Lemma~\ref{perp-membership} is a spanning set of $I_\AAA^{\perp}$.
 \item \label{it4}
 The sequence \eqref{exact-sequence} is exact.
 \end{enumerate}
 \end{lemma}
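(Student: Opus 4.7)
The plan is to prove both parts simultaneously by induction on the size $m = |\AAA|$. The base case $m = 0$ is immediate: $I_\AAA^\perp = \CC$ is spanned by the empty product from Lemma~\ref{perp-membership}. For the inductive step we keep the convention $H_1 = \{x_1 = 0\}$ from Subsection~\ref{subsec:spanning}, so that $\AAA - H_1$ and $\AAA \mid H_1$ have size less than $m$ and satisfy (1) by the inductive hypothesis. Write $V_\AAA$ for the span of the elements from Lemma~\ref{perp-membership}.

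For (2), injectivity of $\varphi$ follows from $x_1$-injectivity on $\Omega_n$, while $\psi \circ \varphi = 0$ is immediate from $\theta_1 \odot (x_1 f) = x_1 (\theta_1 \odot f)$. For surjectivity of $\psi$, we appeal to the inductive (1) for $\AAA \mid H_1$: writing $\bar{\alpha}_j$ for the restriction of $\alpha_j$ by $x_1 = 0$, any spanning element $g = \prod_{i \in I} d\bar{\alpha}_i \cdot \prod_{s \in S} \bar{\alpha}_s$ with disjoint $I, S \subseteq \{2, \dots, m\}$ admits the lift $\tilde{g} = \prod d\alpha_i \cdot \prod \alpha_s$, which lies in $I_\AAA^\perp$ by Lemma~\ref{perp-membership} and satisfies $\tilde{g} \mid_{x_1 = \theta_1 = 0} = g$. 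The element $\theta_1 \tilde{g}$ also lies in $I_\AAA^\perp$ (using $d\alpha_1 = \theta_1$ in Lemma~\ref{perp-membership}), and unwinding the $\theta_1$-free part of $\tilde{g}$ gives $\psi(\theta_1 \tilde{g}) = (0, g)$. By linearity, $\psi$ surjects onto $I_{\AAA \mid H_1}^\perp \oplus I_{\AAA \mid H_1}^\perp$.

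The crux of (2) is the inclusion $\ker \psi \subseteq \operatorname{im} \varphi$. Given $f \in I_\AAA^\perp$ with $\psi(f) = 0$, the decomposition from the proof of Lemma~\ref{lem:well_defined} yields $f = x_1 f'$, and the task reduces to $f' \in I_{\AAA - H_1}^\perp$. When $L \subseteq H_1$ we have $\ell_1 = 0$ and $\rho_{\AAA - H_1}(L) = \rho_\AAA(L)$, so the product rule collapses $\lambda_L^p \odot (x_1 f')$ and $(\lambda_L^{p-1} d\lambda_L) \odot (x_1 f')$ to $x_1$ times the same operators applied to $f'$, and $x_1$-injectivity gives the required vanishing. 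When $L \not\subseteq H_1$, set $\rho = \rho_\AAA(L)$; then $\ell_1 \neq 0$, $\rho \geq 1$, and $\rho_{\AAA - H_1}(L) = \rho - 1$. Lemma~\ref{derivative-implication-lemma} applied to $\lambda_L^{\rho + 1} \odot (x_1 f') = 0$ gives $\lambda_L^{\rho + 1} \odot f' = 0$, and the product-rule identity
\[
0 \;=\; \lambda_L^{\rho + 1} \odot (x_1 f') \;=\; x_1 \bigl( \lambda_L^{\rho + 1} \odot f' \bigr) + (\rho + 1) \ell_1 \bigl( \lambda_L^{\rho} \odot f' \bigr)
\]
then forces $\lambda_L^\rho \odot f' = 0$. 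For the $d\lambda_L$-generator, the crucial observation is $d\lambda_L \odot x_1 = 0$, which implies $(\lambda_L^\rho d\lambda_L) \odot (x_1 f') = \lambda_L^\rho \odot \bigl( x_1 (d\lambda_L \odot f') \bigr)$; applying Lemma~\ref{derivative-implication-lemma} to $g = d\lambda_L \odot f'$ now yields $(\lambda_L^\rho d\lambda_L) \odot f' = 0$, and one further product-rule step descends this to $(\lambda_L^{\rho - 1} d\lambda_L) \odot f' = 0$.

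To conclude (1), take any $f \in I_\AAA^\perp$; the surjectivity of $\psi$ produces $g \in V_\AAA$ with $\psi(g) = \psi(f)$, so $f - g \in \ker \psi = \operatorname{im} \varphi$ by (2). Then $f - g = x_1 f'$ with $f' \in I_{\AAA - H_1}^\perp = V_{\AAA - H_1}$ by the inductive hypothesis, and $x_1 \cdot V_{\AAA - H_1} \subseteq V_\AAA$ (simply absorb the index $1$ into the $S$-set of Lemma~\ref{perp-membership} using $\alpha_1 = x_1$), so $f \in V_\AAA$. The main obstacle throughout is the $L \not\subseteq H_1$ case of the preceding paragraph, where the slightly unexpected application of Lemma~\ref{derivative-implication-lemma} with $g = d\lambda_L \odot f'$ (made possible by $d\lambda_L \odot x_1 = 0$ and the commutativity of $d\lambda_L$ with $\lambda_L^p$ as differential operators) is the key to controlling the fermionic generator alongside the bosonic one.
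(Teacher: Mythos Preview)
Your proof is correct and follows essentially the same inductive strategy as the paper: both argue by induction on $|\AAA|$, establish injectivity and $\psi\circ\varphi=0$ directly, use the inductive hypothesis for $\AAA\mid H_1$ together with the explicit lifts $\tilde g$ and $\theta_1\tilde g$ to get surjectivity, prove $\ker\psi\subseteq\operatorname{im}\varphi$ via the case split $L\subseteq H_1$ versus $L\not\subseteq H_1$ (invoking Lemma~\ref{derivative-implication-lemma} and the product rule exactly as in the paper), and then deduce part~(1) from the exact sequence. Your product-rule identity $\lambda_L^{\rho+1}\odot(x_1 f')=x_1(\lambda_L^{\rho+1}\odot f')+(\rho+1)\ell_1(\lambda_L^{\rho}\odot f')$ even corrects a harmless missing combinatorial factor in the paper's equation~\eqref{line-leibniz1}.
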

 
 \begin{proof}
 Both claims shall follow from induction on $m\coloneqq |\AAA|$.
 Observe that~\ref{it3} follows from Lemma~\ref{coordinate-case} (and a change of variables) when $\AAA$ consists of $m \leq n$ hyperplanes corresponding to independent
 linear forms (even when these hyperplanes could have multiplicity $> 1$).
 
 The map $\varphi: I^{\perp}_{\AAA - H_1}  \rightarrow I^{\perp}_\AAA$ 
 of multiplication by $x_1$ is injective, 
 so the sequence \eqref{exact-sequence} 
 is certainly exact on the left. 
Furthermore, 
direct calculation shows that $\psi \circ \varphi = 0$ 
so that $$\mathrm{image}(\varphi) \subseteq \mathrm{ker}(\psi),$$
i.e. the sequence \eqref{exact-sequence} is a chain complex. 

Our next goal is to prove that \eqref{exact-sequence} is exact
in the middle.
Suppose $f \in I_\AAA^{\perp}$ satisfies $\psi(f) = 0$. 
We have $f = x_1 \cdot f'$ for some superspace 
element $f' \in \Omega_n$. We aim to show the membership
$f' \in I_{\AAA - H_1}^{\perp}$.
Since $f = x_1 \cdot f' \in I_\AAA^\perp$, we have
\begin{equation}
    \lambda_L^{\rho_\AAA(L) + 1} \odot (x_1 \cdot f') = 0 \quad \text{and}
    \quad
    (\lambda_L^{\rho_\AAA(L)} 
    \times d \lambda_L)  \odot (x_1 \cdot f') = 0
\end{equation}
for all lines $L$.
Applying the product rule 
\eqref{eq:product-rule}
to the second condition, 
this may be rewritten as 
\begin{equation}
\label{eq:vanishing-linear-derivatives-one}
    \lambda_L^{\rho_\AAA(L) + 1} \odot (x_1 \cdot f') = 0 \quad \text{and}
    \quad
    \lambda_L^{\rho_\AAA(L)} \odot (x_1 \cdot
    (d \lambda_L \odot f') = 0.
\end{equation}
If $L \subseteq H_1$, we have 
$\rho_\AAA(L) = \rho_{A - H_1}(L)$ so that 
\eqref{eq:vanishing-linear-derivatives-one} and
Lemma~\ref{derivative-implication-lemma} imply
\begin{equation}
\label{eq:vanishing-linear-derivatives-two}
    \lambda_L^{\rho_{\AAA - H_1}(L) + 1} \odot f' = 0 \quad \text{and}
    \quad
    \lambda_L^{\rho_{\AAA - H_1}(L)} \odot 
    (d \lambda_L \odot f') = 0.
\end{equation}
On the other hand, if $L \not\subseteq H_1$
we have $\rho_{\AAA - H_1}(L) = \rho_\AAA(L) - 1$ and  
$\lambda_L = a_1 x_1 + \cdots + a_n x_n$ for 
$a_1 \in \CC - \{0\}.$ 
We have
\begin{align}
    0 &= \lambda_L^{\rho_\AAA(L) + 1} \odot (x_1 \cdot f') \\
      &= (\lambda_L \odot x_1) \cdot 
         (\lambda_L^{\rho_\AAA(L)} \odot f') + 
         x_1 \cdot (\lambda_L^{\rho_\AAA(L) + 1} \odot f') \\
      &= a_1 \cdot (\lambda_L^{\rho_\AAA(L)} \odot f') + 
         x_1 \cdot (\lambda_L^{\rho_\AAA(L) + 1} \odot f') \\
     &= a_1 \cdot (\lambda_L^{\rho_\AAA(L)} \odot f') \\
     &= a_1 \cdot (\lambda_L^{\rho_{\AAA - H_1}(L) + 1} \odot f')
\end{align}
where the first equality follows from 
\eqref{eq:vanishing-linear-derivatives-one},
the second equality is the product rule, the third equality 
is the action of $\lambda_L \odot (-)$, the 
fourth equality follows from 
\eqref{eq:vanishing-linear-derivatives-one} and 
Lemma~\ref{derivative-implication-lemma}, and the final equality
uses $\rho_{\AAA - H_1}(L) = \rho_\AAA(L) - 1$.
Since $a_1 \neq 0$, we conclude that 
\begin{equation}
\label{eq:vanishing-linear-derivatives-three}
    \lambda_L^{\rho_{\AAA - H_1}(L) + 1} \odot f' = 0.
\end{equation}
The same reasoning with
$d\lambda_L \odot f'$ in the place of $f'$ shows that 
\begin{equation}
\label{eq:vanishing-linear-derivatives-four}
    (\lambda_L^{\rho_{\AAA - H_1}(L)} \times 
    d \lambda_L) \odot f' = 
    \lambda_L^{\rho_{\AAA - H_1}(L)} 
    \odot (d \lambda_L \odot f') = 0.
\end{equation}
Equations~\eqref{eq:vanishing-linear-derivatives-two},
\eqref{eq:vanishing-linear-derivatives-three}, and 
\eqref{eq:vanishing-linear-derivatives-four} imply that
 $f' \in I_{\AAA - H_1}^{\perp}$ 
 so that $f = x_1 \cdot f' = \varphi(f')$ lies
 in the image of $\varphi: I_{\AAA - H_1}^\perp \rightarrow I_\AAA^\perp$.
The sequence \eqref{exact-sequence} is therefore exact in the middle.

For exactness on the right, we need to show that 
$\psi: I^{\perp}_\AAA \rightarrow I^{\perp}_{\AAA \mid H_1} \oplus I^{\perp}_{\AAA \mid H_1}$  is surjective. To do this, we inductively assume that \ref{it3} holds for the restricted
arrangement $\AAA \mid H_1$.
This means that 
$I^{\perp}_{\AAA \mid H_1}$ is spanned by products of the form $\prod_{i \in I} d \widetilde{\alpha_i} \times \prod_{s \in S} \widetilde{\alpha_s}$ where $I, S$ are disjoint subsets of 
$\{2, \dots, m \}$ and $\widetilde{\alpha}$ is the restriction of a linear functional $\alpha \in (\CC^n)^*$ to $H_1$.
But Lemma~\ref{perp-membership} guarantees that both 
$\theta_1 \cdot \prod_{i \in I} d \alpha_i \times \prod_{s \in S} \alpha_s$ and
$\prod_{i \in I} d \alpha_i \times \prod_{s \in S} \alpha_s$ lie in $I_\AAA^{\perp}$, so that 
\begin{equation}
\label{first-psi-image}
\psi \left( \theta_1 \cdot \prod_{i \in I} d \alpha_i \times \prod_{s \in S} \alpha_s \right) = \left( 0, \prod_{i \in I} d \widetilde{\alpha_i} \times \prod_{s \in S} \widetilde{\alpha_s} \right)
\end{equation}
and 
\begin{equation}
\label{second-psi-image}
\psi \left( \prod_{i \in I} d \alpha_i \times \prod_{s \in S} \alpha_s \right) = \left(  \prod_{i \in I} d \widetilde{\alpha_i} \times \prod_{s \in S} \widetilde{\alpha_s}, 
\theta_1 \odot \left[  \prod_{i \in I} d \alpha_i \times \prod_{s \in S} \alpha_s    \right]_{x_1 = 0} \right)
\end{equation}
are in the image of $\psi$.  
Given the inductive structure of the spanning set of $I^{\perp}_{\AAA \mid H_1}$, we see from Equation~\eqref{first-psi-image} that 
\begin{equation}
\label{third-psi-image}
0 \oplus I^{\perp}_{\AAA \mid H_1} \subseteq \mathrm{image}(\psi).
\end{equation}
Equation~\eqref{second-psi-image}, the containment \eqref{third-psi-image}, and the inductive spanning set of $I^{\perp}_{\AAA \mid H_1}$ combine to show that 
\begin{equation}
I^{\perp}_{\AAA \mid H_1} \oplus I^{\perp}_{\AAA \mid H_1} = \mathrm{image}(\psi)
\end{equation}
so that $\psi$ is  surjective.
This completes the proof that the \eqref{exact-sequence} is exact.

It remains to show that the set of polynomials in
\ref{it3} spans $I_\AAA^\perp$.
We inductively assume that \ref{it3} holds for the 
$\AAA - H_1$ and $\AAA \mid H_1$.
The form of the exact sequence 
\eqref{exact-sequence}
means that the set of polynomials in 
\ref{it3} spans $I_\AAA^\perp$ as well.
 \end{proof}

 \section{The Hilbert series of $I_{\AAA}^{\perp}$}
 \label{sec:payoff}

 \subsection{Hilbert and Tutte}
Lemma~\ref{exact-spanning-lemma} translates to an equality at the level of Hilbert series. To use the so-called `recipe theorem' we need one additional lemma detailing how the Hilbert series are related in the presence of a coloop.
 
 \begin{lemma}
     \label{lem:coloop-coincidence}
     Suppose $H$ is a coloop in $\AAA$.
     We have $\Hilb(I_{\AAA-H}^\perp;q,t) = 
     \Hilb(I_{\AAA \mid H};q,t)$.
 \end{lemma}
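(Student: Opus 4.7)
My plan is to establish a bigraded isomorphism $\Omega_n/I_{\AAA - H} \cong \Omega_{n-1}/I_{\AAA \mid H}$, from which the Hilbert series equality follows via the identity $\Hilb(\Omega_n/I;q,t) = \Hilb(I^{\perp};q,t)$ recorded in Section~\ref{subsec:superspace}. The coloop hypothesis enters through a coordinate change: since $H$ is a coloop, $\alpha_H$ is not in the span of the other normal vectors, so one may choose coordinates on $\CC^n$ so that $\alpha_H = x_1$ and \emph{no} $\alpha_{H'}$ for $H' \in \AAA - H$ involves $x_1$.

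Under this coordinate system I would first observe that $x_1$ and $\theta_1$ both lie in $I_{\AAA - H}$. Indeed, for the line $L = \CC \cdot e_1$, every $H' \in \AAA - H$ contains $L$, so $\rho_{\AAA - H}(L) = 0$ and the generators $\lambda_L^{\rho + 1} = x_1$ and $d\lambda_L \cdot \lambda_L^{\rho} = \theta_1$ belong to $I_{\AAA - H}$. Next I would identify $\Omega_n/(x_1, \theta_1)$ with $\Omega_{n-1} = \CC[x_2, \ldots, x_n] \otimes \wedge\{\theta_2, \ldots, \theta_n\}$ and view $\AAA \mid H$ as a multiarrangement in $H \cong \CC^{n-1}$ in the coordinates $x_2, \ldots, x_n$. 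It then suffices to show that the image of $I_{\AAA - H}$ under the quotient $\Omega_n \twoheadrightarrow \Omega_{n-1}$ equals $I_{\AAA \mid H}$. For a line $L \subseteq H$, the generators of $I_{\AAA - H}$ attached to $L$ already live in $\Omega_{n-1}$, and $\rho_{\AAA - H}(L) = \rho_{\AAA \mid H}(L)$, so they match the corresponding generators of $I_{\AAA \mid H}$ exactly. For a line $L \not\subseteq H$ with projection $L'\subseteq H$ obtained by setting its first coordinate to zero, the reduction of $\lambda_L$ modulo $x_1$ is a scalar multiple of $\lambda_{L'}$ (or zero when $L$ is the $x_1$-axis), and a direct check using the coordinate choice gives $\rho_{\AAA - H}(L) = \rho_{\AAA \mid H}(L')$; hence no generators outside $I_{\AAA \mid H}$ are produced.

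The main, if mild, obstacle is the bookkeeping for lines $L \not\subseteq H$: matching the $\rho$-values under projection and confirming that the reduced generators do not enlarge the image beyond $I_{\AAA \mid H}$. Both points are ensured by the coloop-adapted coordinate choice, and combining them with the observation that $(x_1,\theta_1) \subseteq I_{\AAA - H}$ yields the bigraded isomorphism $\Omega_n/I_{\AAA - H} \cong \Omega_{n-1}/I_{\AAA \mid H}$, giving the claimed identity of bigraded Hilbert series.
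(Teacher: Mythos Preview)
Your proposal is correct and follows essentially the same route as the paper's proof: choose coordinates adapted to the coloop so that the other normals avoid the distinguished variable, observe that this forces $x_1,\theta_1\in I_{\AAA-H}$, and then match generators under the evaluation map $\Omega_n\twoheadrightarrow\Omega_{n-1}$ to obtain a bigraded isomorphism $\Omega_n/I_{\AAA-H}\cong\Omega_{n-1}/I_{\AAA\mid H}$. The paper uses $x_n$ rather than $x_1$ and phrases the projection slightly differently, but the substance is identical.
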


 \begin{proof}
     Changing coordinates if necessary, we may assume that
     $H$ is the hyperplane $x_n = 0$, and that all other
     hyperplanes in $\AAA$ contain the line 
     $L_0 \coloneqq \CC \{ (0, \dots, 0, 1) \}$.
     Let $\pi: \CC^n \twoheadrightarrow \CC^n/H$
     be the canonical projection.
     If $L \subseteq \CC^n$ is a line with $L \neq L_0$ then
     $\pi(L)$ is a line in $\CC^n/H$ with
     \begin{equation}
     \label{eq:coloop-rho-relation-one}
         \rho_{\AAA - H}(L) = \rho_{\AAA \mid H}(\pi(L)).
     \end{equation}
     Furthermore, we have
     \begin{equation}
     \label{eq:coloop-rho-relation-two}
         \rho_{\AAA - H}(L_0) = 0
     \end{equation}
     so that $x_n, \theta_n \in I_{\AAA - H}$.

     We have a natural identification of 
     $\CC[\CC^n/H] \otimes \wedge(\CC^n/H)^*$ with the subspace
     $\Omega_{n-1} \subseteq \Omega_n$.
     By \eqref{eq:coloop-rho-relation-one} and
     \eqref{eq:coloop-rho-relation-two}, the 
     evaluation map
     $\widetilde{e}: \Omega_n \rightarrow \Omega_{n-1}$
     which sends $x_n, \theta_n \rightarrow 0$ induces
     a bigraded ring homomorphism
     \begin{equation}
         e: \Omega_n / I_{\AAA - H} \longrightarrow 
         \Omega_{n-1} / I_{\AAA \mid H}.
     \end{equation}
     Since $\widetilde{e}$ is surjective, so is $e$.
     Furthermore, if $\overline{L}$ is a line
     in $\CC^n/H$, there exists a line $L \neq L_0$
     in $\CC^n$ with $\pi(L) = \overline{L}$.
     Equation~\eqref{eq:coloop-rho-relation-one} implies that
     the generators 
     \begin{equation}
         \left( \lambda_{\overline{L}} \right)^{\rho_{\AAA \mid H}(\overline{L}) + 1} \quad \text{and} \quad
         d\left( \lambda_{\overline{L}} \right)^{\rho_{\AAA \mid H}(\overline{L}) + 1}
     \end{equation}
     are the images under $\widetilde{e}$ of the generators 
     \begin{equation}
         \left( \lambda_{L} \right)^{\rho_{\AAA - H}(L) + 1} \quad \text{and} \quad
         d\left( \lambda_{L} \right)^{\rho_{\AAA - H}(L) + 1}
     \end{equation}
     of $I_{\AAA - H}$.
     The map 
     $e: \Omega_n / I_{\AAA - H} \rightarrow \Omega_{n-1} / I_{\AAA \mid H}$
     is therefore an isomorphism.
 \end{proof}

We are now ready to describe the consequences of these preceding results.
 \begin{theorem}\label{th:hilb_superspace}
     For any rank $r$ multiarrangement $\AAA$ of size $m$ in $\CC^n$ we have
     \[
        \Hilb\left (\external_{\AAA};q,t\right )=\Hilb(I_{\AAA}^{\perp};q,t)=(1+t)^rq^{m-r}T_{\AAA}\left (\frac{1+q+t}{1+t},\frac{1}{q}\right )
     \]
     where $q$ tracks bosonic degree and $t$ tracks
     fermionic degree.
 \end{theorem}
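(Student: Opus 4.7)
The plan is to prove the claimed equality by induction on $m = |\AAA|$ via a recipe-theorem-style argument. Writing
\[
F(\AAA;q,t) \coloneqq (1+t)^r q^{m-r} T_{\AAA}\bigl(\tfrac{1+q+t}{1+t}, \tfrac{1}{q}\bigr),
\]
the task reduces to showing $\Hilb(\external_\AAA;q,t) = F(\AAA;q,t)$. The base case $m = 0$ is immediate: $F(\emptyset) = 1$, and since $\rho_\emptyset(L) = 0$ for every line $L$, the ideal $I_\emptyset$ absorbs every linear form and every differential of a linear form, so $\external_\emptyset = \CC$ and the Hilbert series equals $1$.

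For the inductive step, fix any $H \in \AAA$. If $H$ is a loop (i.e.\ $\alpha_H = 0$), then $L \subseteq H$ for every line $L$, so $H$ does not contribute to $\rho_\AAA(L)$; hence $I_\AAA = I_{\AAA - H}$, while the factor $1/q$ coming from Tutte's loop rule $T_\AAA = y T_{\AAA-H}$ exactly compensates the drop in $q^{m-r}$, yielding $F(\AAA) = F(\AAA - H)$, and the inductive hypothesis closes this case. Otherwise, change coordinates so $H = \{x_1 = 0\}$; the exact sequence \eqref{exact-sequence} from Section \ref{sec:spanning} then yields the recursion \eqref{eq:technical-hilbert-series-recursion},
\[
\Hilb(\external_\AAA;q,t) = q \cdot \Hilb(\external_{\AAA - H};q,t) + (1+t) \cdot \Hilb(\external_{\AAA \mid H};q,t),
\]
and since $\AAA - H$ and $\AAA \mid H$ each have size $m-1$, applying the inductive hypothesis to both terms on the right reduces the problem to the purely algebraic identity $qF(\AAA - H) + (1+t)F(\AAA \mid H) = F(\AAA)$.

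This identity is verified by cases on $H$. If $H$ is neither a loop nor a coloop, then $\AAA - H$ has rank $r$ and $\AAA \mid H$ has rank $r - 1$, and combining Tutte's deletion-contraction rule $T_\AAA = T_{\AAA - H} + T_{\AAA \mid H}$ with the identities $q \cdot q^{(m-1)-r} = q^{m-r}$ and $(1+t)(1+t)^{r-1} = (1+t)^r$ produces the identity immediately. If $H$ is a coloop, then $\AAA - H$ and $\AAA \mid H$ are isomorphic matroids of rank $r-1$, so $T_{\AAA - H} = T_{\AAA \mid H}$, and Tutte's coloop rule $T_\AAA = x T_{\AAA - H}$ with $x = (1+q+t)/(1+t)$ collapses the identity to a direct check that both sides equal $(1+q+t)(1+t)^{r-1} q^{m-r} T_{\AAA-H}$. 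Lemma \ref{lem:coloop-coincidence} supplies the parallel Hilbert-series identity $\Hilb(\external_{\AAA - H}) = \Hilb(\external_{\AAA \mid H})$ in this case, consistent with the Tutte-level statement, though it is not strictly required to complete the induction. Since the substantive content --- exactness of \eqref{exact-sequence} and the coloop comparison --- is already in hand, the main obstacle is essentially bookkeeping: tracking the rank, size, and Tutte polynomial through each deletion and restriction, and checking that the $q$ and $(1+t)$ prefactors align across the three cases.
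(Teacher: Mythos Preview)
Your proof is correct and follows essentially the same deletion--restriction strategy as the paper: both arguments feed the Hilbert series recursion \eqref{eq:technical-hilbert-series-recursion} coming from the exact sequence \eqref{exact-sequence} into the Tutte deletion--contraction rule. The only organizational difference is that the paper packages the recursion as a Tutte--Grothendieck invariant and invokes the recipe theorem (which requires isolating the coloop case via Lemma~\ref{lem:coloop-coincidence} and noting that $\Hilb$ is a matroid invariant), whereas you run a direct induction on $m$ and verify by hand that the right-hand side $F(\AAA;q,t)$ satisfies the same recursion; as you correctly observe, this bypasses any separate need for Lemma~\ref{lem:coloop-coincidence}, since the exact sequence already applies to coloops and the inductive hypothesis handles $\AAA - H$ and $\AAA \mid H$ uniformly.
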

 \begin{proof}
    It suffices to consider the Hilbert series for $I_{\AAA}^{\perp}$.
    
    We interpret $\Hilb(-;q,t)$ as function from the class of realizable matroids over $\CC$ (which are equivalent to multiarrangements $\AAA$ that allow for degenerate hyperplanes) to $\mathbb{N}[q,t]$ and proceed to show that it meets the criteria of a \emph{Tutte--Grothendieck invariant} as outlined in \cite{BO92}.\footnote{Note that the class of realizable matroids is a minor-closed family, which means we are justified in applying this strategy.}

    Begin by noting that if $\AAA$ is the empty arrangement, then $\Hilb(I_{\AAA}^{\perp};q,t)$ is indeed equal to $1$.
    So we may suppose that  $\AAA$ is nonempty.
    Note further that if $H$ is a degenerate hyperplane in $\AAA$ then 
    \begin{align}\label{eq:hilb_loop_deletion}
        \Hilb(I_{\AAA}^{\perp};q,t)=\Hilb(I_{\AAA-H}^{\perp};q,t).
    \end{align}
    Now suppose that $\AAA$ contains a hyperplane $H$ that is not degenerate. Then Lemma~\ref{exact-spanning-lemma}\ref{it4} tells us that 
    \begin{align}\label{eq:hilb_from_exact_sequence}
        \Hilb(I_{\AAA}^{\perp};q,t) = q \cdot \Hilb(I_{\AAA-H}^{\perp};q,t) + (1+t) \cdot \Hilb(I_{\AAA\mid H}^{\perp};q,t).
    \end{align}
    If we were to further assume that $H$ is a coloop, i.e. its normal vector belongs to every basis in $\mathfrak{M}_{\AAA}$, then Lemma~\ref{lem:coloop-coincidence} tells us that~\eqref{eq:hilb_from_exact_sequence} may be rewritten as
    \begin{align}\label{eq:hilb_for_coloop}
        \Hilb(I_{\AAA}^{\perp};q,t) =  (1+q+t) \cdot \Hilb(I_{\AAA\mid H}^{\perp};q,t).
    \end{align}
    Finally observe that the preceding equalities inductively imply that if multiarrangements $\AAA$ and $\mathcal{B}$ determine isomorphic matroids $\MMM_{\AAA}$ and $\MMM_{\mathcal{B}}$ then $\Hilb(I_{\AAA}^{\perp};q,t)=\Hilb(I_{\mathcal{B}}^{\perp};q,t)$.

    Now our claim follows from the fact that the Tutte polynomial is a \emph{universal Tutte--Grothendieck invariant}, which in particular dictates that $\Hilb$ is a specialization of the Tutte polynomials. The precise specialization can be read off immediately from  Equations~\eqref{eq:hilb_loop_deletion}--\eqref{eq:hilb_for_coloop}, and we leave the details for the reader.
 \end{proof}

 \begin{remark}
     The Hilbert series in Theorem~\ref{th:hilb_superspace} can also be rewritten in terms of the \emph{coboundary polynomial} $\overline{\chi}_{\AAA}$, which is related  to $T_{\AAA}$ by a simple change of variables (see \cite[Definition 3.2]{Ar07} and ensuing discussion). We then obtain
     \[
        \hilb(\external_{\AAA};q,t)=q^{m}\left(\frac{1+t}{1-q}\right)^r \overline{\chi}_{\AAA}\left(\frac{1-q}{1+t},\frac{1}{q}\right).
     \]
     We shall have no further use of this re-expression.
 \end{remark}

\begin{example}\label{ex:demonstrating_hilbert}
    Consider the arrangement $\AAA$ in $\mathbb{R}^2$ determined by the hyperplanes $x_1=0$, $x_2=0$, and $x_1+x_2=0$, shown on the left in Figure~\ref{fig:hyperplane_plus_generic}. 
    One may check that 
    \[
        I_{\AAA}=(x_1^3,x_2^3,(x_1-x_2)^3,x_1^2x_2^2,\theta_1x_1^2,\theta_2x_2^2, (\theta_1-\theta_2)(x_1-x_2)^2, x_1x_2(\theta_1x_2+\theta_2x_1)).
    \]
    One can then compute the following monomial basis for $\external_{\AAA}$:
    \[
    \{1, x_1, x_2, x_1^2, x_1x_2, x_2^2, x_1x_2^2,
\theta_1, \theta_2, x_1\theta_1, x_1\theta_2, x_2\theta_1, x_2\theta_2, x_1x_2\theta_1, x_1x_2\theta_2, x_2^2\theta_1,
\theta_1\theta_2, x_1\theta_1\theta_2, x_2\theta_1\theta_2\}.
    \]
    We thus see that 
    \[
    \Hilb(\external_{\AAA};q,t)=(1+2q+3q^2+q^3)+t(2+4q+3q^2)+t^2(1+2q).
    \]
    The Tutte polynomial of $\mathfrak{M}_{\AAA}$ is $T_{\AAA}(x,y)=x^2+x+y$, and it can be checked that substituting as per Theorem~\ref{th:hilb_superspace} produces this same expression.
\end{example}

It is natural to wonder whether this Tutte polynomial specialization is already present in literature.
Eur--Huh--Larson \cite[Theorem 1.11]{EHL23} recently obtained the following 4-variable specialization by way of the degree map on the stellahedral variety that they introduced:
\begin{align}\label{eq:tm_ehl}
    t_{\MMM}(x,y,z,w)=(y+z)^r(x+w)^{m-r}T_{\MMM}\left (\frac{x+y}{y+z},\frac{x+y+z+w}{x+w}\right).
\end{align}
Here $\MMM$ is any matroid on a ground set of $m$ elements and $r$ is the rank of $\MMM$. Setting $x=1$, $y=0$, $z=q$ and $w=t$ reduces the right-hand side to $q^{r}(1+t)^{m-r}T_{\MMM}\left(\frac{1}{q},\frac{1+q+t}{1+t}\right)$. 
Replacing $\MMM$ by its \emph{dual matroid} $\MMM^*$ then produces the Hilbert series in Theorem~\ref{th:hilb_superspace}.

In \cite[Theorem 1.12]{EHL23} it is shown that $t_{\MMM}$ is \emph{denormalized Lorentzian} (cf. \cite{BH20}  or \cite[Section 4.3]{BLP23}  for precise definitions). 
Given that Lorentzian-ness is preserved under linear change of variables with nonnegative coefficients \cite[Theorem 2.10]{BH20}, we may consider the consequences of our specialization in this light. Setting $y=0$, $z=q$, and $w=t$ produces a homogeneous denormalized Lorentzian polynomial. Subsequently \cite[Theorem 4.4]{BH20} allows us to infer that:
\begin{enumerate}
    \item The polynomials in $q$ (respectively $t$) obtained by considering a fixed power of $t$ (respectively $q$) in $\Hilb(\external_\AAA;q,t)$ have log-concave coefficients.
    \item The homogeneous summands of $\Hilb(\external_\AAA;q,t)$, obtained by taking all terms of a fixed total degree in $q$ and $t$, have log-concave coefficients.
\end{enumerate}
We use these observations to motivate looking closely at various families of polynomials hiding in $\Hilb(\external_{\AAA};q,t)$.

 \subsection{The classical external and central cases}
As a corollary of Theorem~\ref{th:hilb_superspace}, we next show how the superspace quotient $\external_{\AAA}$ in fact `knows' about a few classical algebraic constructs in the context of hyperplane arrangements. Recall that $r$ is the rank of the arrangement $\AAA$. Let us denote the \emph{characteristic polynomial} of $\AAA$ by $\chi_{\AAA}$.

\begin{corollary}\label{cor:hilb_cases}
    Let $r$ be the rank of the arrangement $\AAA\subseteq \CC^n$ and let $m=|\AAA|$.
    The following hold:
    \begin{enumerate}[label=(\arabic*)]
    \item \label{it5} $\Hilb(\external_{\AAA};q,0)=\Hilb(\CC[\xx_n]/J_{\AAA,1};q)$.
    \item \label{it6} $[t^r]\Hilb(\external_{\AAA};q,t)=\Hilb(\CC[\xx_n]/J_{\AAA,0};q)$ where $[t^r](-)$ extracts the coefficient 
    of $t^r$.
    \item \label{it7} Let $\operatorname{top}({\AAA})$ denote the summand of maximal total degree in $\Hilb(\external_{\AAA};q,t) \in \mathbb{N}[q,t]$. Then
    \[
       \operatorname{top}(\AAA)=(-1)^{r}q^{m-r}t^{r}\chi_{\AAA}\left(-\frac{q}{t}\right).
    \]
    \end{enumerate}
\end{corollary}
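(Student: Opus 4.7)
The plan is to derive all three parts directly from the closed form
$$\Hilb(\external_{\AAA};q,t) = (1+t)^r q^{m-r} T_{\AAA}\left(\frac{1+q+t}{1+t}, \frac{1}{q}\right)$$
given by Theorem~\ref{th:hilb_superspace}, combined with the known univariate specializations recorded in Equations~\eqref{eq:hilb_classical_external} and \eqref{eq:hilb_classical_central}, and the standard identity $\chi_{\AAA}(x) = (-1)^r T_{\AAA}(1-x,0)$.

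For \ref{it5}, I would simply set $t = 0$ in the formula above: the prefactor $(1+t)^r$ collapses to $1$ and the argument $\frac{1+q+t}{1+t}$ collapses to $1+q$, leaving $q^{m-r} T_{\AAA}(1+q, 1/q)$, which is exactly $\Hilb(\CC[\xx_n]/J_{\AAA,1};q)$ by \eqref{eq:hilb_classical_external}.

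For \ref{it6} and \ref{it7}, the central calculation is to expand the Tutte polynomial in its monomial basis. Writing $T_{\AAA}(x,y) = \sum_{i,j} c_{ij} x^i y^j$ with $i \leq r$ and $j \leq m-r$, I would rewrite
$$\Hilb(\external_{\AAA};q,t) = q^{m-r} \sum_{i,j} c_{ij} \, (1+t)^{r-i} (1+q+t)^i q^{-j}.$$
For \ref{it6}, the factor $(1+t)^{r-i}(1+q+t)^i$ has $t$-degree exactly $r$, and the coefficient of $t^r$ in this product is $1$ (obtained by taking the top power of $t$ from each factor). Extracting $[t^r]$ thus yields $q^{m-r} \sum_{i,j} c_{ij} q^{-j} = q^{m-r} T_{\AAA}(1,1/q)$, matching $\Hilb(\CC[\xx_n]/J_{\AAA,0};q)$ by \eqref{eq:hilb_classical_central}.

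For \ref{it7}, the same expansion shows that the $(i,j)$-summand contributes to total $q,t$-degree at most $(m-r-j) + r = m-j$, so only the terms with $j=0$ reach the maximal total degree $m$. Their top-degree contribution is $c_{i,0} q^{m-r} t^{r-i} (q+t)^i$, giving
$$\operatorname{top}(\AAA) = q^{m-r} t^r \sum_i c_{i,0} \left(\frac{q+t}{t}\right)^i = q^{m-r} t^r \, T_{\AAA}\!\left(\tfrac{q+t}{t},\, 0\right).$$
Applying $T_{\AAA}(1-x,0) = (-1)^r \chi_{\AAA}(x)$ with $x = -q/t$ (so that $1-x = (q+t)/t$) immediately gives $\operatorname{top}(\AAA) = (-1)^r q^{m-r} t^r \chi_{\AAA}(-q/t)$, as desired.

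There is no serious obstacle here; the three statements are essentially formal specializations of Theorem~\ref{th:hilb_superspace}. The only step requiring mild care is the degree bookkeeping in \ref{it6} and \ref{it7}, in particular verifying that the $j=0$ terms are the only ones contributing to the top total degree and that the highest power of $t$ in $(1+t)^{r-i}(1+q+t)^i$ has unit coefficient.
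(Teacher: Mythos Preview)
Your argument is correct and reaches the same conclusions. The difference from the paper's proof lies in the expansion you choose: you write $T_{\AAA}(x,y) = \sum_{i,j} c_{ij}\, x^i y^j$ in its monomial basis and track degrees term by term, whereas the paper substitutes the corank--nullity formula \eqref{eq:defn_tutte} directly to obtain
\[
\Hilb(\external_{\AAA};q,t) = \sum_{A \subseteq \groundset} q^{m-|A|}(1-q)^{|A|-\rank{A}}(1+t)^{\rank{A}},
\]
from which \ref{it6} and \ref{it7} are read off. In the paper's expansion every summand has total $q,t$-degree exactly $m$, so the top-degree extraction for \ref{it7} is automatic and the link to $\chi_{\AAA}$ comes from its subset-sum definition rather than from the identity $\chi_{\AAA}(x) = (-1)^r T_{\AAA}(1-x,0)$ that you invoke. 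Both routes are short and formal; the paper's is marginally cleaner for \ref{it7} (no degree comparison among summands), while yours is arguably more transparent for \ref{it6}. One small point: your claim that the maximal total degree is $m$ (equivalently, that some $c_{i,0} \neq 0$) tacitly uses that $\AAA$ is loopless, which is implicit in the standing hypothesis that the $H_i$ are genuine hyperplanes.
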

\begin{proof}
\ref{it5} is immediate so we focus on the remaining assertions. 
As before we think of $\AAA$ as a matroid on a ground set $\groundset$.
The definition of the Tutte polynomial in~\ref{eq:defn_tutte} implies that
\begin{align}
\label{eq:rewriting_bigraded_hilb}
(1+t)^{r}q^{m-r}T_{\AAA}\left(\frac{1+q+t}{1+t},\frac{1}{q}\right)
 & =
(1+t)^{r}q^{m-r}\sum_{A\subseteq \groundset}
\left( \frac{1+q+t}{1+t}-1\right)^{r-\rank{A}}\left(\frac{1}{q}-1\right)^{|A|-\rank{A}}\nonumber\\
&=(1+t)^{r}q^{m-r}\sum_{A\subseteq \groundset}
 \frac{q^{r-\rank{A}}(1-q)^{|A|-\rank{A}}}{(1+t)^{r-\rank{A}}q^{|A|-\rank{A}}}
 \nonumber\\
 &=\sum_{A\subseteq \groundset} q^{m-|A|}(1-q)^{|A|-\rank{A}}(1+t)^{\rank{A}}.
\end{align}
We thus infer that
\begin{align}
\label{eq:top_theta_deg}
    [t^r]\Hilb(\external_{\AAA};q,t) = \sum_{\substack{A\subseteq \groundset\\ \rank{A}=r}} q^{m-|A|}(1-q)^{|A|-r} &= q^{m-r}\sum_{\substack{A\subseteq \groundset\\ \rank{A}=r}} q^{r-|A|}(1-q)^{|A|-r}\nonumber\\
    & = q^{m-r}T_{\AAA}\left(1,q^{-1}\right),
\end{align}
and the right hand side in~\eqref{eq:top_theta_deg} equals $\Hilb(\CC[\xx_n]/J_{\AAA,0})$ by~\eqref{eq:hilb_classical_central}. This establishes~\ref{it6}.

From~\eqref{eq:rewriting_bigraded_hilb} it follows that the total top degree summand is given by 
\begin{align}
    \operatorname{top}(\AAA) 
    & = \sum_{A\subseteq\groundset}(-1)^{|A|-\rank{A}}q^{m-\rank{A}}t^{\rank{A}}\\
& = (-1)^{r}q^{m-r}t^r\sum_{A\subseteq \groundset }(-1)^{|A|}
\left(-\frac{q}{t}\right)^{r-\rank{A}}.
\end{align}
This last expression is essentially the expression for the characteristic polynomial for $\AAA$ when expressed as a Tutte specialization. This finishes the proof for~\ref{it7}.
\end{proof}

Going back to Example~\ref{ex:demonstrating_hilbert}
we see that the coefficients of $t^0$ and $t^2$ are indeed the Hilbert series for the classical external and central zonotopal algebras. 
As far as $\mathrm{top}(\AAA)$ is concerned we get $q^3+3q^2t+2qt^2=q(q+t)(q+2t)$, and this is clearly a homogenized version of the characteristic polynomial of the central arrangement in Figure~\ref{fig:hyperplane_plus_generic}.

\subsection{The case of real arrangements}
\label{subsec:real}

For this subsection, we suppose that the arrangement $\AAA\subset \CC^n$ is obtained by complexifying a real arrangement in $\mathbb{R}^n$ that we shall continue to call $\AAA$.
Equivalently, we may think of the matroid $\mathfrak{M}_{\AAA}$ corresponding to $\AAA$ as being realizable over $\mathbb{R}$. We are working under this assumption primarily because we would like to relate the dimension of  $I_{\AAA}^{\perp}$ (or equivalently $\external_{\AAA}$) to face enumeration in appropriate real arrangements.

Let $\AAA^{\mathrm{gen}}$ be a \emph{generic} affine arrangement determined by $\MMM$. 
Thus hyperplanes in $\AAA^{\mathrm{gen}}$ are obtained as affine translates of linear hyperplanes in $\AAA$ with the constraint that  $k$ hyperplanes have a nonempty intersection if and only if the normals to these hyperplanes form a linearly independent set.

Let $f_i(\AAA^{\mathrm{gen}})$ for $0\leq i\leq n$ denote the number of $i$-dimensional faces in the polyhedral complex on $\mathbb{R}^n$ induced by $\AAA^{\mathrm{gen}}$. For any hyperplane arrangement, an expression for these face numbers in terms of the M\"obius values of intersection poset of $\AAA^{\mathrm{gen}}$ is derived in \cite{Zas75} in terms of a specialization of the two-variable \emph{Whitney polynomial}. We prefer to follow the relatively modern (and more general) treatment in \cite[Section 6]{Ath96}.

If we ignore the bosonic grading, the algebra
$\external_\AAA$ may be regarded as a singly-graded
ring under the fermionic grading.
Up to grading reversal, the face numbers $f_i(\AAA^{\mathrm{gen}})$ form the coefficients
of the corresponding Hilbert series.

\begin{corollary}\label{cor:f_vector}
For any complexified real arrangement $\AAA$
we have
\[
\Hilb(\external_{\AAA};1,t) = \sum_{0\leq i\leq n}f_{n-i}(\AAA^{\mathrm{gen}}) \cdot t^i.
\]
\end{corollary}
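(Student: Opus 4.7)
My plan is to specialize Theorem~\ref{th:hilb_superspace} at $q=1$, rewrite the result as a sum indexed by independent sets of $\MMM_\AAA$, and match this coefficient-by-coefficient against the Zaslavsky face enumeration of $\AAA^{\mathrm{gen}}$, using independence counts of contractions as the common intermediary.

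For the algebraic side, I set $q=1$ in Theorem~\ref{th:hilb_superspace} and expand using~\eqref{eq:defn_tutte}. Only subsets $A$ with $|A|=\rank{A}$ (i.e.\ the independent sets of $\MMM_\AAA$) survive the substitution $y=1$, and the prefactor $(1+t)^r$ cancels the remaining denominators $(1+t)^{r-|A|}$, yielding
\begin{equation*}
    \Hilb(\external_\AAA;1,t) \;=\; (1+t)^r\,T_\AAA\!\left(\tfrac{2+t}{1+t},\,1\right) \;=\; \sum_{A\text{ indep}} (1+t)^{|A|}.
\end{equation*}
Expanding $(1+t)^{|A|}$ binomially and reindexing via nested pairs $B\subseteq A$ of independent sets converts this into
\begin{equation*}
    \Hilb(\external_\AAA;1,t) \;=\; \sum_{i\geq 0} t^i \sum_{\substack{B \text{ indep}\\ |B|=i}} T_{\MMM_\AAA/B}(2,1),
\end{equation*}
using the bijection between independent sets of $\MMM_\AAA$ containing $B$ and independent sets of the contraction $\MMM_\AAA/B$, combined with the identity $T_\mathfrak{N}(2,1) = \#\III(\mathfrak{N})$ obtained from the same subset expansion of the Tutte polynomial.

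For the geometric side, I would apply Zaslavsky's face enumeration formula $f_d(\mathcal{B}) = \sum_{X,\ \dim X = d} r(\mathcal{B}^X)$ to $\mathcal{B} = \AAA^{\mathrm{gen}}$. Genericity identifies $L(\AAA^{\mathrm{gen}})$ with the poset of independent sets of $\MMM_\AAA$: a size-$i$ independent set $B$ indexes the codimension-$i$ flat $X_B \coloneqq \bigcap_{H\in B} H$. The key geometric observation is that $\AAA^{\mathrm{gen}, X_B}$ is itself a generic affine arrangement on $X_B \cong \mathbb{R}^{n-i}$: hyperplanes $H' \in \overline{B}\setminus B$ are translated off $X_B$ (so contribute nothing), while hyperplanes $H' \notin \overline{B}$ cut $X_B$ transversely with normals realizing the non-loop part of $\MMM_\AAA/B$. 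The region count of a generic affine arrangement with matroid $\mathfrak{N}$ equals $T_\mathfrak{N}(2,1)$ (this can be derived via Zaslavsky's region formula together with the intersection poset being the independent-set poset, or alternatively by induction on deletion-restriction). Combining these inputs gives
\begin{equation*}
    f_{n-i}(\AAA^{\mathrm{gen}}) = \sum_{\substack{B \text{ indep}\\ |B|=i}} T_{\MMM_\AAA/B}(2,1),
\end{equation*}
which is exactly the coefficient of $t^i$ produced above.

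The step I expect to require the most care is the geometric identification of $\AAA^{\mathrm{gen}, X_B}$ as a generic affine arrangement of matroid $\MMM_\AAA/B$ modulo loops: one has to verify that the genericity of translations passes to the restriction and that loop-hyperplanes in $\overline{B}\setminus B$ really do miss $X_B$ (so that they drop out and their absence is harmless because loops do not affect the evaluation $T(\cdot,1)$). Once this geometric fact is in hand, the corollary reduces to the bookkeeping identity above between the two expansions, which match termwise after comparing coefficients of $t^i$.
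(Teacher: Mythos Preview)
Your argument is correct. On the algebraic side you do exactly what the paper does: specialize \eqref{eq:rewriting_bigraded_hilb} at $q=1$ to obtain $\sum_{A\in\III}(1+t)^{|A|}$. On the geometric side you take a somewhat longer route than the paper. The paper applies the Whitney--polynomial formula for face numbers (citing Athanasiadis) and, after identifying the intersection poset of $\AAA^{\mathrm{gen}}$ with the independence complex of $\MMM_\AAA$ and plugging in $\mu(B,A)=(-1)^{|A|-|B|}$, lands directly on $\sum_{B\subseteq A,\ A,B\in\III}t^{|B|}=\sum_{A\in\III}(1+t)^{|A|}$, matching the algebraic side in one step. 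You instead invoke Zaslavsky's formula $f_d=\sum_{X}r(\mathcal B^X)$, identify each restricted arrangement $\AAA^{\mathrm{gen},X_B}$ as a generic realization of $\MMM_\AAA/B$, and use $r=T_{\MMM_\AAA/B}(2,1)=\#\III(\MMM_\AAA/B)$; this is correct and your verification that hyperplanes in $\overline B\setminus B$ miss $X_B$ is exactly the right point to check. The payoff of your route is that it makes the face count stratum-by-stratum explicit, while the paper's route avoids the detour through contractions and region counts by summing the M\"obius contributions all at once.
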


Observe that $f_n$ and $f_0$ give the dimensions of the external and central zonotopal algebras respectively and this is in agreement with Corollary~\ref{cor:hilb_cases}\ref{it5}--\ref{it6}.

\begin{proof}
Throughout this proof we let $L$ denote the intersection poset of $\AAA^{\mathrm{gen}}$ and $\mu$ its M\"{o}bius function. 
By \cite[Theorem 6.4]{Ath96} and the discussion leading up to it we have 
\begin{align}\label{eq:whitney}
    \sum_{0\leq i\leq n}f_{n-i}(\AAA^{\mathrm{gen}}) \cdot t^i=(-1)^{n}\sum_{x\leq_{L} z}\mu(x,z)(-t)^{n-\dim(x)}(-1)^{\dim(z)}.
\end{align}

Recall that the intersection poset of a generic arrangement is isomorphic to a truncated Boolean lattice. 
Furthermore, observe additionally 
that if we let $\mc{I}$ denote the set of independent subsets of the matroid $\mathfrak{M}_{\AAA}$, then the \emph{independence complex}\footnote{the simplicial complex underlying $\mc{I}$ interpreted as a poset with partial order given by inclusion}  is also isomorphic to $L$.
Hence we may rewrite~\eqref{eq:whitney} as
\begin{align}\label{eq:new_whitney}
    \sum_{0\leq i\leq n}f_{n-i}(\AAA^{\mathrm{gen}}) \cdot t^i
    & = (-1)^{n}\sum_{\substack{B\subseteq A\\A,B\in \mc{I}}}(-1)^{|B|-|A|}(-t)^{|B|}(-1)^{n-|A|}\nonumber \\
    & = \sum_{\substack{B\subseteq A\\A,B\in \mc{I}}} t^{|B|}\nonumber \\
    & = \sum_{A\in \mc{I}} (1+t)^{|A|}.
\end{align}
This last expression is precisely $\hilb(\external_{\AAA};q,t)\mid_{q = 1}$ as can be verified by setting $q=1$ in \eqref{eq:rewriting_bigraded_hilb}.
\end{proof}

\begin{figure}[!h]
    \includegraphics[scale=1]{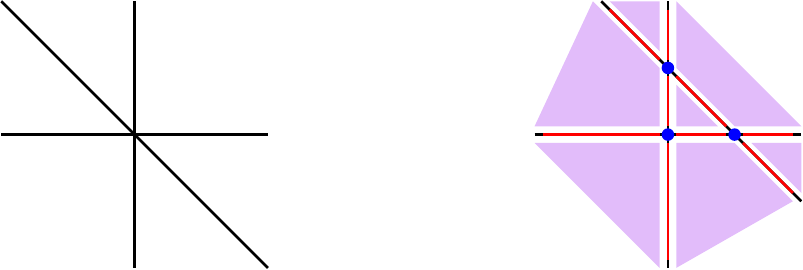}
    \caption{A central arrangement (left) and a generic deformation thereof (right). The polyhedral complex induced has 7 full-dimensional faces, nine $1$-dimensional faces, and three $0$-dimensional faces.}
    \label{fig:hyperplane_plus_generic}
\end{figure}

\begin{remark}
\label{rmk:fields}
    Corollary~\ref{cor:f_vector}
    bears similarity to a result in 
    superspace coinvariant theory.
    Let $\AAA_n^\braid$ be the type A$_{n-1}$
    braid arrangement in $\RR^n$
    with hyperplanes $x_i - x_j = 0$
    for $1 \leq i < j \leq n$.
    Let $f_i(\AAA_n^\braid)$ be the number 
    of $i$-dimensional faces in $\AAA_n^\braid$, so that 
    $f_i(\AAA_n^\braid)$ counts ordered set partitions
    of $[n]$ into $i$ blocks.

    The symmetric group $\symm_n$ acts 
    on both $\CC[\xx_n]$ and $\Omega_n$ by permuting 
    subscripts. The {\em coinvariant ideal}
    $I_n \subseteq \CC[\xx_n]$ is generated by 
    $\symm_n$-invariants with vanishing constant term.
    Explicitly, we have 
    $I_n = (e_1, \dots, e_n)$ where $e_j \in \CC[\xx_n]$
    is the elementary symmetric polynomial of degree $j$.
    The {\em coinvariant ring}
    $\CC[\xx_n]/I_n$ has vector space dimension
    $n!$, the number of maximal-dimensional 
    faces in $\AAA_n^\braid$.
    Define $SI_n \subseteq \Omega_n$ to be the 
    differential closure of $I_n$, so that
    $SI_n = (e_1, \dots, e_n, d e_1, \dots, d e_n)$.
    The ideal $SI_n$ is generated by $\symm_n$-invariants
    in $\Omega_n$ with vanishing constant term.
    The first and third authors proved 
    \cite{rhoades2023hilbert} that 
    \begin{equation}
        \Hilb(\Omega_n/SI_n; 1, t) = 
        \sum_{i = 0}^{n-1} f_{n-i}(\AAA^\braid_n) \cdot t^i
    \end{equation}
    so that the {\em superspace coinvariant ring}
    $\Omega_n/SI_n$ coming from the differential
    closure of $I_n$
    encodes information about faces of higher 
    codimension. For example, when $n = 3$
    we have
    \[
    \Hilb(\Omega_3/SI_3;1,t) = 6 + 6 t + t^2
    \]
    corresponding to the decomposition of $\RR^3$
    by $\AAA^\braid_3$ into 6 faces of codimension 0,
    6 faces of codimension 1, and 1 face of codimension 2.
    
    It would be interesting to see how far the analogy
    between superization and faces of higher
    codimension can be carried in the context of 
    arrangements.
    The Weyl arrangement of type F$_4$ shows that there
    will be subtlety here.
    The fermionic Hilbert series 
    of the quotient of $\Omega_4$ by the 
     F$_4$-invariants ideal is
    $1152 t^0 + 2304 t^1 + 1396 t^2 + 244 t^3  + t^4$
    whereas the reversed $f$-vector coming from
     the F$_4$ Weyl arrangement is slightly different:
     $(1152, 2304, 1392, 240, 1)$.
\end{remark}

We now describe another interpretation for the dimension of $\external_{\AAA}$ which casts a different light.
Given a matroid $\MMM$ and $d \geq 1$, let $d\MMM$ denote its \emph{$d$-fold thickening}, i.e. the matroid obtained obtained by including $(d-1)$ additional parallel elements for each element in $\MMM$. The Tutte polynomial of  $d\MMM$ is known (cf. \cite[Lemma 4.4]{Ber10}) to satisfy
\begin{align}
T_{d\MMM(x,y)} = \left(\frac{1-y^d}{1-y}\right)^{r}T_{\MMM}\left(\frac{xy-x-y-y^d}{y^d-1},y^d\right).
\end{align}
At $d=2$ this becomes
\begin{align}
\label{eq:doubled_tutte}
T_{2\MMM}(x,y)=(1+y)^{r}T_{\MMM}\left(\frac{x+y}{y+1},y^2\right).
\end{align}

Given a central multiarrangement $\AAA\subseteq \mathbb{R}^n$, 
let us abuse notation and denote the two-fold thickening of the underlying matroid by $2\AAA$. 
We may now consider the generic hyperplane arrangement $2\AAA^{\mathrm{gen}}$.
The following result makes a curious connection between the dimension of the classical external zonotopal algebra attached to $2\AAA$ to the superspace analogue attached to $\AAA$. More generally we have:
\begin{corollary}\label{cor:doubled_generic}
Let $\AAA\subseteq \mathbb{R}^n$ be a central multiarrangement.
The following equality holds:
\[
    \Hilb(\external_{\AAA};q,t)|_{(q,t)=(q^2,q)}=\Hilb(\CC[\xx_n]/J_{2\AAA,1};q).
\]
Thus $\dim(\external_{\AAA})$ equals the number of regions in $2\AAA^{\mathrm{gen}}$.
\end{corollary}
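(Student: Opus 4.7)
The plan is to derive the first equation by a straightforward manipulation combining Theorem~\ref{th:hilb_superspace}, the classical Hilbert series formula~\eqref{eq:hilb_classical_external} applied to the two-fold thickening $2\AAA$, and the doubling identity~\eqref{eq:doubled_tutte}. The second assertion will then follow by setting $q=1$ and invoking the classical fact that the external zonotopal algebra has dimension equal to the number of regions of a generic deformation.

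Concretely, I would first substitute $(q,t)=(q^2,q)$ into Theorem~\ref{th:hilb_superspace}, yielding
\[
\Hilb(\external_{\AAA};q^2,q) = (1+q)^{r}\, q^{2(m-r)}\, T_{\AAA}\!\left(\tfrac{1+q+q^2}{1+q},\tfrac{1}{q^2}\right).
\]
Turning to the other side, since $2\AAA$ has $2m$ elements and rank $r$, Equation~\eqref{eq:hilb_classical_external} gives
\[
\Hilb(\CC[\xx_n]/J_{2\AAA,1};q) = q^{2m-r}\, T_{2\AAA}(1+q,q^{-1}).
\]
Plugging $(x,y)=(1+q,q^{-1})$ into~\eqref{eq:doubled_tutte}, simplifying $\tfrac{(1+q)+q^{-1}}{1+q^{-1}} = \tfrac{1+q+q^2}{1+q}$, and absorbing the factor $(1+q^{-1})^{r} = q^{-r}(1+q)^{r}$ produces precisely the right-hand side of the first display. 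This establishes the claimed equality of Hilbert series.

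For the dimension consequence, setting $q=1$ in the identity just proved gives $\dim \external_{\AAA} = \dim \CC[\xx_n]/J_{2\AAA,1}$. The classical theorem of Ardila--Postnikov and Holtz--Ron identifies the right-hand side with the number of regions of the generic affine deformation $2\AAA^{\mathrm{gen}}$, completing the argument.

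I do not anticipate any substantive obstacle: the proof is essentially a careful Tutte bookkeeping exercise. The only subtleties are tracking the change of parameters $m \mapsto 2m$ (while $r \mapsto r$) under thickening, and performing the two algebraic simplifications that reconcile the Tutte arguments on the two sides.
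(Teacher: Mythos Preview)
Your proposal is correct and follows essentially the same approach as the paper: both sides are expressed via the Tutte polynomial using Theorem~\ref{th:hilb_superspace}, Equation~\eqref{eq:hilb_classical_external}, and the doubling identity~\eqref{eq:doubled_tutte}, and then matched by the very simplifications you describe. For the dimension statement the paper phrases the classical input as ``$T_{2\AAA}(2,1)$ counts independent sets, hence regions of $2\AAA^{\mathrm{gen}}$,'' which is equivalent to the Ardila--Postnikov/Holtz--Ron fact you invoke.
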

\begin{proof}
Let $r$ denote the rank of the arrangement $\AAA$ and let $m=|\AAA|$.
By combining~\ref{eq:hilb_classical_external} and~\ref{eq:doubled_tutte} we have
\begin{align}
\Hilb(\CC[\xx_n]/J_{2\AAA,1};q)=q^{2m-r}\,T_{2\AAA}\left(1+q,\frac{1}{q}\right)
& = (q^2)^{m-r}(1+q)^r\,T_{\mc{M}}\left(\frac{1+q+q^2}{1+q},\frac{1}{q^2}\right)\nonumber\\
& = \Hilb(\external_{\AAA};q,t)|_{(q,t)=(q^2,q)},
\end{align}
where the last equality follows upon comparison with Theorem~\ref{th:hilb_superspace}.

To get the second half of the claim, note that setting $q=1$ in $q^{2m-r}T_{2\AAA}\left(1+q,\frac{1}{q}\right)$ gives the number of independent sets in the collection of vectors determining $2\mathfrak{M}_{\AAA}$, which in turn is equal to the number of regions in $2\AAA^{\mathrm{gen}}$.
\end{proof}

As an example we consider in Figure~\ref{fig:hyperplanes_doubled_generic} the generic hyperplane arrangement in $\mathbb{R}^2$ obtained by `doubling' each hyperplane in the arrangement in Figure~\ref{fig:hyperplane_plus_generic}.
The reader is welcome to verify that this arrangement has 19 regions, and this matches the sum of the $f$-vector coefficients.

\begin{figure}[!h]
    \includegraphics[scale=0.8]{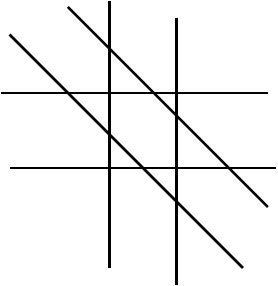}
    \caption{A generic arrangement corresponding to a two-fold thickening of the central arrangement in Figure~\ref{fig:hyperplane_plus_generic}.}
    \label{fig:hyperplanes_doubled_generic}
\end{figure}

\subsection{The case of graphical arrangements}
\label{subsec:graphical}
We briefly dwell on a class of real arrangements that has been particularly well studied. Let $G$ be a simple graph with vertex set identified with $[n]\coloneqq \{1,\dots,n\}$ and $E(G)$ denoting its set of edges.  
For convenience let us assume that $    G$ is connected. Let $g=|E(G)|-n+1$ denote the \emph{genus} of $G$.
The arrangement $\AAA_G$ in $\mathbb{R}^n$ is obtained by considering the collection of hyperplanes $x_i-x_j=0$ for $\{i,j\}\in E(G)$. 
Note that the connectedness implies that the rank of $\AAA_G$ is equal to $n-1$.
Theorem~\ref{th:hilb_superspace} then says that
\begin{align}\label{eq:above}
    \Hilb(\external_{\AAA_G};q,t)=(1+t)^{n-1}q^{g}\,T_G\left(\frac{1+q+t}{1+t},\frac{1}{q} \right).
\end{align}
We note here that Backman--Hopkins \cite{BH17} in their study of \emph{fourientations} consider a family of Tutte specializations of the form $(k+l)^{n-1}(k+m)^g\,T_G\left(\frac{\alpha k+\beta l+m}{k+m},\frac{\gamma k+1+\delta m}{k+l}\right)$, where $\alpha, \gamma \in \{0,1,2\}$ and $\beta,\delta \in \{0,1\}$.
Upon setting $k=q-1$, $l=1$, $m=2+t-q$, $\gamma=0$, $\alpha=2$, $\beta=1$, $\gamma=\delta=0$ we recover the specialization in~\eqref{eq:above}. Thus Theorem~\ref{th:hilb_superspace} realizes algebraically one of the specializations considered in \cite{BH17}, and thus addresses a question in \cite[Remark 5.6]{BHT18} in a very special case.

At $q=t=1$ we recover the quantity $2^{n-1}T_G(3/2,1)$. We know by Corollary~\ref{cor:doubled_generic} that this equals the number of regions in $2\AAA_G^{\mathrm{gen}}$.
In fact this quantity has already been investigated by Hopkins--Perkinson \cite[Theorem 3.2]{HoPe16} where they call $2\AAA_G^{\mathrm{gen}}$ the \emph{generic bigraphic} arrangement. 
This evaluation at $q=1$ of the second quantity appearing in Corollary~\ref{cor:doubled_generic} is the number of lattice points in the twice-dilated graphical zonotope attached to $G$. 
This is a consequence of a more general fact (cf. \cite{HR11}) that, given a realizable matroid  corresponding to a \emph{totally unimodular} matrix $A$, the dimension of the associated external zonotopal algebra is the number of lattice points in the zonotope corresponding to $A$.
Backman--Baker--Yuen explain this numerological coincidence via a \emph{geometric bijection} in \cite[Remark 5.1.3]{BBY19}.

\section{A basis for the Macaulay inverse}
\label{sec:basis}

Fix a total order on the hyperplanes of $\AAA$; this induces a lexicographical order on the subsets of $\AAA$.
Given a basis $B$ of $\AAA$ and a hyperplane $H \in \AAA - B$, the hyperplane $H$ is {\em externally active with respect to $B$}
if $B$ is the lexicographically largest basis contained in $B \cup H$.  
Otherwise, the hyperplane $H$ is {\em externally passive with respect to $B$}.
Similarly, a hyperplane $H \in B$ is {\em internally active with respect to $B$} if $B$ is the lexicographically smallest basis containing $B - H$.
Otherwise $H$ is {\em internally passive with respect to $B$}.

By Theorem~\ref{th:hilb_superspace} we have that
\begin{align}
\label{eq:bigraded_hilb}
\hilb(I_{\AAA}^{\perp};q,t)
&=
\sum_{B\in \mc{B}}(1+q+t)^{\intact(B)}(1+t)^{\intpas(B)}q^{\extpas(B)},
\end{align}
and this suggests properties that a potential basis for $I_{\AAA}^{\perp}$ might satisfy.

\begin{example}\label{ex:activities}
Consider the hyperplane arrangement from Example~\ref{ex:demonstrating_hilbert} with the normal vectors to the hyperplanes $H_1=\{x_1=0\}$, $H_2=\{x_2=0\}$, and $H_3=\{x_1+x_2=0\}$ recorded as columns of a $2\times 3$ matrix below. Assume our total order on hyperplanes in $\AAA$ is obtained by reading the columns left to right, i.e. $H_1<H_2<H_3$.
\[
\left[\begin{array}{ccc}1 & 0 & 1\\ 0 & 1 & 1\end{array}\right]
\]
For the basis $\{H_1,H_2\}$, the hyperplane $H_3$ is externally passive whereas $H_1$ and $H_2$ are internally active.
For the basis $\{H_1,H_3\}$, the hyperplane $H_2$ is externally passive, $H_1$ is internally active while $H_3$ is internally passive. Finally for the basis $\{H_2,H_3\}$, the hyperplane $H_1$ is externally active whereas both $H_2$ and $H_3$ are internally passive. Thus the right-hand side of~\eqref{eq:bigraded_hilb} equals
\[
    (1+q+t)^2q^1(1+t)^0+(1+q+t)^1q^0(1+t)^1+(1+q+t)^0q^0(1+t)^2,
\]
which the reader may verify is the same as the Hilbert series obtained earlier.
\end{example}

 Our next goal is to give a basis for $I_\AAA^{\perp}$ which may be extracted from the spanning set of 
 Lemma~\ref{exact-spanning-lemma}\ref{it3}. Our main tool is the short exact sequence of Lemma~\ref{exact-spanning-lemma}\ref{it4}.  We  need the following 
 simple fact about short exact sequences of vector spaces.
 
 \begin{lemma}
 \label{short-exact-bases}
 Let $0 \rightarrow U \xrightarrow{ \, \, \varphi \, \, } V \xrightarrow{ \, \, \psi \, \, } W \rightarrow 0$ be a short exact sequence of vector spaces and suppose we have subsets 
 $A \subseteq U, B \subseteq V,$ and $C \subseteq W$ which satisfy the following properties.
 \begin{itemize}
 \item $A$ is a basis of $U$,
 \item $C$ is a basis of $W$, and
 \item the set $B$ may be expressed as a union $B = B' \cup B''$ of $B$ such that 
 $$
 B' = \{ \varphi(a) \,:\, a \in A \} 
 \quad \quad \text{and} \quad \quad
 C = \{ \psi(b'') \,:\, b'' \in B''\}.
 $$
 \end{itemize}
 Then $B$ is a basis of $V$, and the union $B' \sqcup B''$ is disjoint.
 \end{lemma}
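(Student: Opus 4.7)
The plan is to verify the three basic properties in turn: (i) disjointness of $B'$ and $B''$, (ii) that $B$ spans $V$, and (iii) that $B$ is linearly independent. All three will be routine diagram chases using only the exactness of the given sequence and the basis properties of $A$ and $C$; I expect no genuine obstacle here, merely careful bookkeeping. I will interpret $B'' \to C$, $b'' \mapsto \psi(b'')$, as a bijection (otherwise the independence argument below would force a contradiction), so that linear combinations over $B''$ correspond under $\psi$ to linear combinations over $C$.

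For disjointness, I would suppose $b \in B' \cap B''$. Then $b = \varphi(a)$ for some $a \in A$, so $\psi(b) = (\psi \circ \varphi)(a) = 0$ by exactness at $V$; but $\psi(b) \in C$ is a basis element of $W$, hence nonzero, a contradiction. For spanning, given $v \in V$, I would first expand $\psi(v) = \sum_i c_i \psi(b''_i)$ using that $C$ spans $W$. Then $v - \sum_i c_i b''_i$ lies in $\ker(\psi) = \mathrm{image}(\varphi)$, so equals $\varphi(u)$ for some $u \in U$. Writing $u = \sum_j d_j a_j$ in the basis $A$ and applying $\varphi$ yields $v = \sum_i c_i b''_i + \sum_j d_j \varphi(a_j)$, which lies in the span of $B'' \cup B' = B$.

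For linear independence, I would start from a relation $\sum_i c_i b''_i + \sum_j d_j \varphi(a_j) = 0$ with the $b''_i$ and $a_j$ chosen distinct. Applying $\psi$ kills the $\varphi(a_j)$ terms by exactness and produces $\sum_i c_i \psi(b''_i) = 0$; linear independence of $C$ then forces every $c_i = 0$. The leftover relation $\varphi\bigl(\sum_j d_j a_j\bigr) = 0$, together with injectivity of $\varphi$ (exactness at $U$), gives $\sum_j d_j a_j = 0$, and independence of $A$ forces every $d_j = 0$. Combining the three steps, $B = B' \sqcup B''$ is a disjoint union that both spans $V$ and is linearly independent, hence is a basis, completing the proof.
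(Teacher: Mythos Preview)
The paper states this lemma without proof as a ``simple fact,'' and your argument is the standard diagram chase, which is correct. Your observation that $\psi|_{B''}$ must be read as a bijection onto $C$ is well-taken: the lemma as literally stated is false without this (take $U = W = \CC$, $V = \CC^2$ with the obvious maps, and $B'' = \{(0,1),(1,1)\}$), though in the paper's sole application (Theorem~\ref{m-is-basis}) the bijection holds by construction.
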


Recall from the introduction that we write $EA_\AAA(B), EP_\AAA(B), IA_\AAA(B),$ and $IP_\AAA(B)$ for the externally active, externally passive, internally active, and internally passive hyperplanes in $\AAA$ with 
respect to a given basis $B$.   
Note also that if we have a total order on the hyperplanes in an arrangement $\AAA$, then the deleted and restricted arrangement $\AAA-H$ and $\AAA\mid H$ both inherit total~orders.

\begin{lemma}
\label{active-passive-decomposition}
Let $H_0 \in \AAA$ be the largest hyperplane with respect to the given total order on $\AAA$.  Assume that $H_0$ is neither a loop nor a coloop. 
Let $\BBB_\AAA$, $\BBB_{\AAA - H_0}$ and $\BBB_{A \mid H_0}$ denote the sets of bases of $\AAA$, $\AAA - H_0$, and $\AAA \mid H_0$. Write
$\BBB_\AAA = \BBB'_\AAA \sqcup \BBB''_\AAA$ where
$$
\BBB'_\AAA \coloneqq \{ B' \in \BBB_\AAA \,:\, H_0 \notin B' \} \quad \quad \text{and} \quad \quad
\BBB''_\AAA \coloneqq \{ B'' \in \BBB_\AAA \,:\, H_0 \in B'' \}.
$$
We have bijections $f: \BBB'_\AAA \rightarrow \BBB_{\AAA - H_0}$ and $g: \BBB''_\AAA \rightarrow \BBB_{\AAA \mid H_0}$ given by
$$
f(B') \coloneqq B' \quad \quad \text{and} \quad \quad g(B'') \coloneqq \{ H \cap H_0 \,:\, H \in B'', \, \, H \neq H_0 \}.
$$
Furthermore, if $B' \in \BBB'_\AAA$ and $B'' \in \BBB''_\AAA$ then $H_0$ is internally passive in $B''$ and the activities of these bases are related by
$$
\begin{cases}
EA_\AAA(B') = EA_{\AAA - H_0}(f(B')) \\
EP_\AAA(B') = EP_{\AAA - H_0}(f(B')) \cup H_0 \\
IA_\AAA(B') = IA_{\AAA - H_0}(f(B')) \\
IP_\AAA(B') = IP_{\AAA - H_0}(f(B'))
\end{cases}  \quad \quad
\begin{cases}
EA_{\AAA \mid H_0}(g(B'')) = \{ H \cap H_0 \,:\, H \in EA_\AAA(B'') \} \\
EP_{\AAA \mid H_0}(g(B'')) = \{ H \cap H_0 \,:\, H \in EP_\AAA(B'') \} \\
IA_{\AAA \mid H_0}(g(B'')) = \{ H \cap H_0 \,:\, H \in IA_\AAA(B'') \} \\
IP_{\AAA \mid H_0}(g(B'')) = \{ H \cap H_0 \,:\, H \in IP_\AAA(B''), \, \, H \neq H_0 \}. \\
\end{cases}
$$
\end{lemma}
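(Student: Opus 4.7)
The plan is to translate the activity identities into the classical characterization: an element $H \notin B$ is externally active with respect to a basis $B$ iff $H = \min C(H, B)$, where $C(H, B)$ is the fundamental circuit of $H$ in $B \cup \{H\}$, and $H \in B$ is internally active iff $H = \min C^*(H, B)$, the fundamental cocircuit. Because $H_0$ is the global maximum of the order on $\AAA$, inserting or removing $H_0$ from any (co)circuit leaves its minimum unchanged. Combined with the standard matroid facts that circuits of $\MMM_\AAA / H_0$ are obtained from circuits of $\MMM_\AAA$ by deleting $H_0$ when present, and cocircuits of $\MMM_\AAA \setminus H_0$ from cocircuits of $\MMM_\AAA$ by the same operation, all the identities will become nearly automatic.

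First I would verify the bijections. Since $H_0$ is not a coloop, $\rank{\AAA - H_0} = r$, so bases of $\AAA$ avoiding $H_0$ coincide with bases of $\AAA - H_0$; this yields $f$. For $g$, I treat $\AAA \mid H_0$ as a multiset so that $K \mapsto K \cap H_0$ is an order-preserving bijection $\AAA - H_0 \to \AAA \mid H_0$ realizing the matroid contraction $\MMM_\AAA / H_0 = \MMM_{\AAA \mid H_0}$. A basis $B''$ containing $H_0$ cannot contain two hyperplanes $K_1, K_2$ with $K_1 \cap H_0 = K_2 \cap H_0$ (else $\{K_1, K_2, H_0\} \subseteq B''$ would have rank $\leq 2$), so $g(B'')$ has $r - 1$ distinct elements and is a basis of $\AAA \mid H_0$. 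The internal passivity of $H_0$ in any such $B''$ follows at once: $H_0$ not being a coloop guarantees $C^*(H_0, B'')$ contains some $K \neq H_0$, and $K < H_0$ forces $H_0 \neq \min C^*(H_0, B'')$.

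For $B' \in \BBB'_\AAA$, the four displayed identities reduce to two observations. For any $H \in B'$ or any $H \in \AAA - B' - \{H_0\}$, the fundamental circuit or cocircuit of $H$ with respect to $B'$ in $\AAA$ differs from the one computed in $\AAA - H_0$ at most by possibly containing $H_0$, which does not affect the minimum; hence the activity status is preserved. For the sole new element $H_0 \in \AAA - B'$, its fundamental circuit $C(H_0, B')$ necessarily contains some $K \in B'$ with $K < H_0$ (since $H_0$ is not a coloop, such a $K$ exists, and $K$ is strictly smaller than $H_0$), so $H_0 \neq \min C(H_0, B')$, accounting for the extra $\{H_0\}$ appearing in $EP_\AAA(B')$.

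For $B'' \in \BBB''_\AAA$, I use the contraction $\MMM_\AAA / H_0$. For $H \in B'' - H_0$, the fundamental cocircuit $C^*(H, B'')$ in $\AAA$ is automatically disjoint from $H_0$ (as cocircuits relative to $B''$ are subsets of $(\AAA - B'') \cup \{H\}$), and under $g$ it corresponds to the fundamental cocircuit of $H \cap H_0$ in $g(B'')$; since $g$ preserves the order, this yields the internal activity/passivity identities. For $H \in \AAA - B''$ (necessarily $H \neq H_0$), the circuit $C(H, B'')$ may contain $H_0$, but contraction sends $C(H, B'')$ to $g(C(H, B'') - \{H_0\})$, which is the fundamental circuit of $H \cap H_0$ in $g(B'')$; because $H_0$ is the maximum, the minimum of this set equals $g(\min C(H, B''))$, yielding the external activity identities. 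The main obstacle I expect is the careful handling of multiarrangement conventions, particularly parallels to $H_0$ and the interplay between the inherited total order on $\AAA \mid H_0$ and the matroid contraction, but the assumption that $H_0$ is neither a loop nor a coloop keeps all relevant sets nonempty and the bijection $g$ well-defined.
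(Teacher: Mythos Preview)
Your proof is correct and follows essentially the same approach as the paper: both arguments hinge on the observation that, since $H_0$ is the global maximum, its presence or absence in a fundamental (co)circuit never affects the minimum, and both verify directly that the relevant fundamental circuit in $\AAA$ restricts to the fundamental circuit in $\AAA\mid H_0$. The only cosmetic difference is that you phrase the internal-activity argument via fundamental cocircuits $C^*(H,B)$, whereas the paper uses the equivalent set $D=\{H_1:(B-H)\cup H_1 \text{ is a basis}\}$; one minor slip is that in your treatment of $H_0\in EP_\AAA(B')$ the existence of some $K\in C(H_0,B')$ with $K\neq H_0$ follows from $H_0$ not being a \emph{loop} rather than not being a coloop, but this is harmless under the stated hypotheses.
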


\begin{proof}
Since $H_0$ is not a coloop of $\AAA$, the deleted arrangement $\AAA - H_0$ has the same rank as $\AAA$. A basis of $\AAA - H_0$ is therefore
precisely a basis of $\AAA$ which does not contain $H_0$, so that $f$ is a bijection. Since $H_0$ is not a loop at $\AAA$, the set $\BBB''_\AAA$ is non-empty
and the map $g$ is also a bijection.

The statements regarding activities make use of the fact that $H_0$ is the largest hyperplane in $\AAA$ and neither a loop nor a coloop. For example, given 
a basis $B'' \in \BBB''_\AAA$ of $\AAA$ which contains $H_0$, since $H_0$ is not a coloop there is more than one basis of 
$\AAA$ containing $B'' - H_0$, and since $H_0$
the largest hyperplane, $B''$ is the lexicographically largest (not smallest) basis of $\AAA$ containing $B'' - H_0$. We conclude that $H_0 \in IP_\AAA(B'')$.

Let $B' \in \BBB'_{\AAA - H_0}$ be a basis of $\AAA$ which does not contain $H_0$. Let $H \in \AAA - B'$ be a hyperplane which is not contained in $B'$, so that 
$B' \cup H$ is dependent. If $H \neq H_0$, the circuit $C$ in $B' \cup H$ containing $H$ is the same subset of $\AAA$ or of $\AAA - H_0$, so whether or not 
$H$ is minimal in $C$ is unaffected by the deletion of $H_0$. Since $H_0$ is not a loop, the circuit $C_0$ in $B' \cup H_0$ does not consist of $H_0$ alone,
and since $H_0$ is largest, we know that $H_0$ is not the minimal element of $C_0$.  We conclude that $H_0 \in EP_\AAA(B')$.
In summary, we have $EA_\AAA(B') = EA_{\AAA - H_0}(f(B'))$ and $EP_\AAA(B') = EP_{\AAA - H_0}(f(B')) \cup H_0$.

Let $B' \in \BBB'_{\AAA - H_0}$ be a basis of $\AAA$ which does not contain $H_0$ and let $H \in B'$. Then $B' - H$ is an independent set, but not a basis of $\AAA$
or of $\AAA - H_0$.
Let $D \subseteq \AAA$ be the family of hyperplanes $$D \coloneqq \{ H_1 \in \AAA \,:\, (B' - H) \cup H_1 \text{ is a basis of $\AAA$} \}.$$
The hyperplane $H_0$ may or may not be an element of $D$; let $D' \coloneqq D - H_0$ be the subset of $D$ formed by removing $H_0$ if it is present.
Since $H < H_0$ and $H \in D, D'$ we have $\min(D) = \min(D')$. We conclude that 
$IA_\AAA(B') = IA_{\AAA - H_0}(f(B'))$ and 
$IP_\AAA(B') = IP_{\AAA - H_0}(f(B'))$.

Now let $B'' \in \BBB''_\AAA$ be a basis of $\AAA$ which contains $H_0$.  Let $H \in \AAA - B''$ be a hyperplane which is not contained in $B''$.
Suppose first that $H \in EP_\AAA(B'')$. Then $H$ is not the smallest element of the fundamental circuit $C$ of $B'' \cup H$.  In particular, the hyperplane $H$ cannot
be a loop in $\AAA$ and cannot
be `parallel to' (i.e. another copy of) the largest hyperplane $H_0$ so that $H \cap H_0$ is not a loop in the restriction $\AAA \mid H_0$.
We conclude that
$$
C \mid H_0 \coloneqq \{ H' \cap H_0 \,:\, H' \in C, \, \, H' \neq H_0 \}
$$
is a dependent subset of $\AAA \mid H_0$ contained in $g(B'') \cup (H \cap H_0)$ which has at least one element smaller than $H \cap H_0$.
We claim that the dependent set $C \mid H_0$ is in fact a circuit in $\AAA \mid H_0$; this will show that $H \cap H_0 \in EP_{\AAA \mid H_0}(g(B''))$.
Indeed, choose $H_1 \in C$ with $H_1 \neq H_0$. 
We argue that $ \{ H' \cap H_0 \,:\, H' \in C, \, \, H' \neq H_0, \, H_1 \} $ is independent in $\AAA \mid H_0$. Indeed, a dependence of 
 $ \{ H' \cap H_0 \,:\, H' \in C, \, \, H' \neq H_0, \, H_1 \} $ in $\AAA \mid H_0$ would induce a dependence of $C - H_1$ in $\AAA$, which contradicts the fact that 
 $C$ is a circuit in $\AAA$.
We conclude that $C \mid H_0$ is a circuit in $\AAA \mid H_0$ so that  $H \cap H_0 \in EP_{\AAA \mid H_0}(g(B''))$.

Let $B'' \in \BBB''_\AAA$ be a basis of $\AAA$ which contains $H_0$ and $H \in \AAA - B''$ be a hyperplane which is not contained in $B''$.
Assume now that $H \in EA_\AAA(B'')$. Let $C \subseteq \AAA$ be the unique circuit contained in $B'' \cup H$, so that $H$ is the smallest element of $C$.
By the same reasoning as in the previous paragraph, the set $C \mid H_0$ is the unique circuit contained in $g(B) \cup (H \cap H_0)$.
Since the smallest element of $C$ is $H \neq H_0$, the smallest element of $C \mid H_0$ is $H \cap H_0$ and $H \cap H_0 \in EA_{\AAA \mid H_0}(g(B''))$.

Let $B'' \in \BBB''_\AAA$ be a basis of $\AAA$ which contains $H_0$ and $H \in B''$ be a hyperplane in $B''$ with $H \neq H_0$.
Since $H$ is an element of the basis $B''$, we know that $H$ is not a loop in $\AAA$.
Since the basis $B''$ contains $H_0$, we also know that $H$ is not parallel to $H_0$, so that $H \cap H_0$ is not a loop in $\AAA \mid H_0$.
As before, let $D =  \{ H_1 \in \AAA \,:\, (B'' - H) \cup H_1 \text{ is a basis of $\AAA$} \}$ and consider the subset
$$
D'' \coloneqq \{ H_1 \cap H_0 \,:\, H_1 \in D \} \subseteq \AAA \mid H_0
$$
of $\AAA \mid H_0$ obtained by intersecting each hyperplane in $D$ with $H_0$. Since $H_0 \in (B'' - H)$, each element of $D''$ is a genuine hyperplane in $H_0$. By linear algebra
we have
$$
D'' = \{ H_1 \cap H_0 \in \AAA \mid H_0 \,:\, (g(B'') - (H \cap H_0)) \cup (H \cap H_1) \text{ is a basis of $\AAA \mid H_0$ } \}.
$$
The hyperplane $H$ is minimal in $D$ if and only if the intersection $H \cap H_0$ is minimal in $D''$, so that 
$$
H \in IA_\AAA(B'') \quad \Leftrightarrow \quad 
H \cap H_0 \in IA_{\AAA \mid H_0}(g(B''))
$$
and
$$
H \in IP_\AAA(B'') \quad \Leftrightarrow \quad 
H \cap H_0 \in IP_{\AAA \mid H_0}(g(B'')).
$$
Since $H_0 \in IP_{\AAA}(B'')$, this completes the proof.
\end{proof}

We now consider the family $M_\AAA$ of superspace elements given by
\begin{equation}
M_\AAA \coloneqq \bigcup_B
\left\{
\prod_{e \, \in \, E} \alpha_e \times \prod_{i \, \in \, I} d \alpha_i \times \prod_{s \, \in \, S} \alpha_s \times \prod_{t \, \in \, T} d \alpha_t \,:\, 
\begin{array}{c}
E = EP_\AAA(B), \, \,
I \subseteq IP_\AAA(B), \\ S, T \subseteq IA_\AAA(B), \, \, S \cap T = \varnothing
\end{array} 
\right\}
\end{equation}
where the union is over bases $B$ of $\AAA$. 
Observe that every superpolynomial of $M_\AAA$ appears in statement of Lemma~\ref{perp-membership}, and 
therefore lies in the spanning set of $I_\AAA^{\perp}$ coming from Lemma~\ref{exact-spanning-lemma}.
Even better, we have 
\begin{theorem}
\label{m-is-basis}
The set $M_\AAA$ forms a basis for $I_\AAA^{\perp}$.
\end{theorem}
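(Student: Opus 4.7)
The plan is to induct on $|\AAA|$, applying Lemma~\ref{short-exact-bases} to the exact sequence \eqref{exact-sequence}. First, observe that the bigraded generating function of $M_\AAA$ already matches $\hilb(I_\AAA^\perp;q,t)$ term by term: for each basis $B$, the factor $\prod_{e \in EP_\AAA(B)} \alpha_e$ contributes $q^{\extpas(B)}$, each choice $I \subseteq IP_\AAA(B)$ contributes $(1+t)^{\intpas(B)}$ since $d\alpha_i$ has bidegree $(0,1)$, and each disjoint pair $(S,T) \subseteq IA_\AAA(B)$ contributes $(1+q+t)^{\intact(B)}$, matching Equation~\eqref{eq:bigraded_hilb}. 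Combined with $M_\AAA \subseteq I_\AAA^\perp$ from Lemma~\ref{perp-membership}, it suffices to show that $M_\AAA$ is linearly independent.

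Loops $H \in \AAA$ reduce to $\AAA - H$ because $I_\AAA = I_{\AAA - H}$ and $M_\AAA = M_{\AAA - H}$ verbatim (loops lie in $EA_\AAA(B)$ for every basis $B$, so they contribute no factor to any element of $M_\AAA$). We may therefore assume $\AAA$ is loop-free. Let $H_0$ be the lex-largest hyperplane of $\AAA$; after a change of coordinates we may assume $\alpha_{H_0} = x_1$. Apply Lemma~\ref{short-exact-bases} to the exact sequence \eqref{exact-sequence} with $H_0$ in place of $H_1$, taking $A = M_{\AAA - H_0}$, which is a basis of $I_{\AAA - H_0}^\perp$ by induction.

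Partition $M_\AAA = M_\varphi \sqcup M_a \sqcup M_b$ according to whether a given element contains $\alpha_{H_0} = x_1$ as a factor ($M_\varphi$), $d\alpha_{H_0} = \theta_1$ as a factor ($M_b$), or neither ($M_a$). Lemma~\ref{active-passive-decomposition}, together with a parallel analysis when $H_0$ is a coloop (where every basis contains $H_0$ and $H_0 \in IA_\AAA(B'')$ rather than $H_0 \in IP_\AAA(B'')$), translates activity data bijectively between $\AAA$ and each of $\AAA - H_0$ and $\AAA \mid H_0$. This yields the identification $M_\varphi = \varphi(M_{\AAA - H_0})$, while both $M_a$ and $M_b$ are in bijection with $M_{\AAA \mid H_0}$ via the restriction $H \mapsto H \cap H_0$ applied to the parametrizing data.

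The decisive step is that $\psi$ restricted to $M_a \sqcup M_b$ is an isomorphism onto $I_{\AAA \mid H_0}^\perp \oplus I_{\AAA \mid H_0}^\perp$. For $f \in M_b$, write $f = \pm \theta_1 \cdot f_0$ where $f_0$ contains no $d\alpha_{H_0}$ factor; then $\psi_1(f) = 0$ and $\psi_2(f) = \pm f_0|_{x_1 = \theta_1 = 0}$, which via the identities $\alpha_H|_{x_1 = 0} = \widetilde{\alpha_H}$ and $d\alpha_H|_{x_1 = \theta_1 = 0} = d\widetilde{\alpha_H}$ is, up to sign, the element of $M_{\AAA \mid H_0}$ corresponding to $f$ under the bijection above. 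For $f \in M_a$, the same identities give $\psi_1(f)$ as, up to sign, the corresponding element of $M_{\AAA \mid H_0}$, while $\psi_2(f)$ is immaterial. Hence the matrix of $\psi|_{M_a \sqcup M_b}$ in the basis $(M_{\AAA \mid H_0} \times \{0\}) \sqcup (\{0\} \times M_{\AAA \mid H_0})$ is block triangular with invertible diagonal blocks, so $\psi(M_a \sqcup M_b)$ is a basis of $I_{\AAA \mid H_0}^\perp \oplus I_{\AAA \mid H_0}^\perp$; Lemma~\ref{short-exact-bases} then completes the induction. The principal obstacle is the bookkeeping of signs when extracting the $\theta_1$ coefficient from a product of fermionic factors and verifying the commutativity of evaluation at $x_1 = 0$ with the differential $d$; these are mechanical once formalized.
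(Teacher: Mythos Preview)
Your proposal is correct and follows essentially the same approach as the paper: induct on $|\AAA|$ via the exact sequence \eqref{exact-sequence}, partition $M_\AAA$ according to whether the distinguished hyperplane contributes an $\alpha$-factor, a $d\alpha$-factor, or neither, and use block triangularity of $\psi$ on the latter two pieces together with Lemma~\ref{short-exact-bases}. Your sets $M_\varphi, M_a, M_b$ are exactly the paper's $M_\AAA^{\xx}, M_\AAA^{\varnothing}, M_\AAA^{\ttheta}$, and your triangularity argument for $\psi$ is the same as the paper's computation of $\psi(f)$ and $\psi(\theta_1 \cdot f)$ for $f \in M_\AAA^{\varnothing}$.

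The one organizational difference worth flagging: the paper takes as its base case arrangements consisting entirely of loops and coloops (handled by Lemma~\ref{coordinate-case}) and then chooses $H_1$ in the inductive step to be neither a loop nor a coloop, so that Lemma~\ref{active-passive-decomposition} applies directly. You instead reduce loops and then allow the largest hyperplane $H_0$ to be a coloop, deferring to ``a parallel analysis'' for that case. This is valid---when $H_0$ is a coloop one indeed has $H_0 \in IA_\AAA(B)$ for every basis, and the activity data transfers to both $\AAA - H_0$ and $\AAA \mid H_0$ as you describe---but note that Lemma~\ref{active-passive-decomposition} as stated explicitly assumes $H_0$ is not a coloop, so you are invoking a companion statement that is nowhere proved in the paper. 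Your opening cardinality observation (that the bigraded generating function of the index set for $M_\AAA$ matches $\Hilb(I_\AAA^\perp;q,t)$) is correct but redundant once Lemma~\ref{short-exact-bases} is in play.
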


\begin{proof}
If $\AAA$ consists entirely of loops and coloops, this is a consequence of Lemma~\ref{coordinate-case}.
We may therefore assume that $\AAA$ contains a hyperplane $H_1$ which is neither a loop nor a coloop, and that $H_1$ is the largest hyperplane
in $\AAA$. We may also inductively assume that the theorem has been established for the deleted arrangement $\AAA - H_1$ and the 
restricted arrangement $\AAA \mid H_1$.

Changing coordinates if necessary, we may assume that the hyperplane $H_1$ is $\{ x_1 = 0 \}$ so that $\alpha_{H_1} = x_1$ and $d \alpha_{H_1} = \theta_1$.
By Lemma~\ref{exact-spanning-lemma}, we have an exact sequence
 \begin{equation}
 0 \rightarrow I^{\perp}_{\AAA - H_1}  \xrightarrow{ \, \, \varphi \, \, } I^{\perp}_\AAA \xrightarrow{ \, \, \psi \, \,} I^{\perp}_{\AAA \mid H_1} \oplus I^{\perp}_{\AAA \mid H_1} \rightarrow 0
 \end{equation}
 where $\varphi(f) = x_1 \cdot f$ and $\psi(g) = \left( g \mid_{x_1, \theta_1 = 0}, \theta_1 \odot g \mid_{x_1 = 0} \right)$. 
 By Lemma~\ref{active-passive-decomposition},
the set $M_\AAA$ of superspace elements decomposes as a union 
\begin{equation}
M_\AAA = M^{\varnothing}_\AAA \cup M^{\xx}_\AAA \cup M^{\ttheta}_\AAA
\end{equation}
of three subsets. Here
\begin{multline}
M^{\xx}_\AAA = x_1 \cdot M_{\AAA - H_1} = \\
\bigcup_{B' \, \in \, \BBB'}
\left\{
\prod_{e \, \in \, E} \alpha_e \times \prod_{i \, \in \, I} d \alpha_i \times \prod_{s \, \in \, S} \alpha_s \times \prod_{t \, \in \, T} d \alpha_t \,:\, 
\begin{array}{c}
E = EP_\AAA(B'), \, \,
I \subseteq IP_\AAA(B'), \\ S, T \subseteq IA_\AAA(B'), \, \, S \cap T = \varnothing
\end{array} 
\right\}
\end{multline}
and $\BBB'$ denotes the family of bases $B'$ of $\AAA$ which do not contain $H_0$,
and if $\BBB''$ denotes the bases $B''$ of $\AAA$ which contain $H_0$ we have
\begin{equation}
M^{\varnothing}_\AAA =  \\
\bigcup_{B'' \, \in \, \BBB''}
\left\{
\prod_{e \, \in \, E} \alpha_e \times \prod_{i \, \in \, I} d \alpha_i \times \prod_{s \, \in \, S} \alpha_s \times \prod_{t \, \in \, T} d \alpha_t \,:\, 
\begin{array}{c}
E = EP_\AAA(B''), \, \,
I \subseteq IP_\AAA(B''), \\  H_1 \notin I \\ S, T \subseteq IA_\AAA(B''), \, \, S \cap T = \varnothing
\end{array} 
\right\}
\end{equation}
and
\begin{equation}
M^{\ttheta}_\AAA = \theta_1 \cdot M^{\varnothing}_\AAA.
\end{equation}

Observe that $M^{\xx}_\AAA = x_1 \cdot M_{\AAA - H_1}$ is the image of $M_{\AAA - H_1}$ under $\varphi$; by induction,
the set $M_{\AAA - H_1}$ is a basis of $I_{\AAA - H_1}^{\perp}$.
We claim that $\psi$ carries $M^{\varnothing}_\AAA \cup M^{\ttheta}_\AAA$
onto a basis of $I_{\AAA \mid H_1}^{\perp} \oplus I_{\AAA \mid H_1}^{\perp}$.  Indeed, by Lemma~\ref{active-passive-decomposition} and the inductively
established version of the theorem for $I_{\AAA \mid H_1}^{\perp}$, we know that 
$\{ f \mid_{x_1, \theta_1 = 0} \,:\, f \in M^{\varnothing}_\AAA \}$ is a basis of $I^{\perp}_{\AAA \mid H_1}$. Given any element $f \in M^{\varnothing}_\AAA$,
we have a corresponding element $\theta_1 \cdot f \in M^{\ttheta}_\AAA$. Calculating the images $\psi(f)$ and $\psi(\theta_1 \cdot f)$ of these elements under
$\psi$ gives
\begin{align}
\psi(f) &= ( f \mid_{x_1, \theta_1 = 0}, \theta_1 \odot f \mid_{x_1 = 0} ), \\
\psi(\theta_1 \cdot f) &= ( 0, f \mid_{x_1, \theta_1 = 0} ).
\end{align}
Taken together, these equations imply that $\psi \left( M^{\varnothing}_\AAA \cup M^{\ttheta}_\AAA \right)$ is indeed a basis of $I^{\perp}_\AAA \oplus I^{\perp}_\AAA$.
By Lemma~\ref{short-exact-bases}, we conclude that $M_\AAA = M^{\xx}_\AAA \cup \left( M^{\varnothing}_\AAA \cup M^{\ttheta}_\AAA \right)$ is a basis
of $I^{\perp}_\AAA$, and the theorem is~proven.
\end{proof}


\section{Further remarks}
\label{sec:fin}

\subsection{Interpolating between the classical internal and central cases}
\label{subsec:internal}
The reader may wonder why the internal zonotopal algebra does not make an appearance in this work. We clarify this matter by putting forth some conjectures and the hurdles that one would need to surmount.

Consider the superspace ideal
\begin{equation}
I'_{\AAA}\coloneqq I_{\AAA,0} \coloneqq \left( \lambda_L^{\rho_\AAA(L)}, d \lambda_L^{\rho_\AAA(L)} \,:\, L \subseteq \CC^n \text{ a line}  \right) \subseteq \Omega_n.
\end{equation}
This ideal is the differential closure of the ideal $J_{\AAA,0}$ according to the definition in~\eqref{eq:power_ideals}. 
Part of the reason why the quotient $\Omega_n/I'_\AAA$ might need an even more intricate analysis stems from the fact that it sees both the classical central zonotopal algebra as well as the internal zonotopal algebra, as explained after Conjecture~\ref{conj:internal}.
It is this latter algebra that is comparatively (and surprisingly) difficult to understand. 
Even though its Hilbert series is known and, keeping with the theme, equals a specialization of the Tutte polynomial (cf. \cite{HR11} or \cite[Proposition 4.15]{AP10}), there is no known basis for the Macaulay inverse. Indeed, as pointed out in \cite{APcorr} in the disproof of a conjecture of Holtz--Ron, the proof of the claim in \cite[Proposition 4.5(iii)]{AP10} is faulty. 
See also the discussion after~\cite[Theorem 4.2]{Be18} on this front.

We put forth the following conjecture.
\begin{conjecture}\label{conj:internal}
    Let $\AAA$ be a rank $r$ central multiarrangement in $\CC^n$ with $|\AAA|=m$.
    The bigraded Hilbert series of $\Omega_n/I'_{\AAA}$ satisfies the equality
    \[
    \Hilb\left(\Omega_n/I'_{\AAA};q,t\right)=(1+t)^rq^{m-r}T_{\AAA}\left(\frac{1}{1+t},\frac{1}{q}\right).
    \]
\end{conjecture}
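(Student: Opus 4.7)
The plan is to adapt the Tutte--Grothendieck strategy used to prove Theorem~\ref{th:hilb_superspace}. A direct algebraic check using $T_\AAA = T_{\AAA-H} + T_{\AAA|H}$ for non-loop non-coloop $H$, together with the usual coloop/loop factorizations of the Tutte polynomial, confirms that
\[
F_\AAA(q,t) := (1+t)^r q^{m-r}\, T_\AAA\!\left(\frac{1}{1+t},\frac{1}{q}\right)
\]
satisfies the deletion-restriction recursion $F_\AAA = q\, F_{\AAA-H} + (1+t)\, F_{\AAA|H}$ for non-loop non-coloop $H$ (identical in form to the external case), together with base values $F(\text{empty}) = F(\text{loop}) = F(\text{coloop}) = 1$. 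By the universal Tutte--Grothendieck recipe theorem \cite{BO92}, it suffices to prove that $\Hilb(\Omega_n/I'_\AAA;q,t)$ obeys the same recursion and base values. Throughout one must work under the convention that the ambient space has dimension equal to the rank ($r=n$), since a line $L$ with $\rho_\AAA(L)=0$ forces $1 \in I'_\AAA$ and collapses the quotient to zero.

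To establish the recursion, I would attempt to replicate the short exact sequence~\eqref{exact-sequence} with the same maps $\varphi(f) = x_1 f$ and $\psi(g) = (g|_{x_1,\theta_1=0},\, \theta_1 \odot g|_{x_1=0})$, now for $I'_\AAA$, $I'_{\AAA-H_1}$, and $I'_{\AAA|H_1}$. A careful adaptation of Lemma~\ref{lem:well_defined} shows that well-definedness, injectivity of $\varphi$, and middle exactness all go through without the $+1$ exponent buffer: the product-rule identity
\[
\lambda_L^{\rho_\AAA(L)} \odot (x_1 f') = a_1 (\lambda_L^{\rho_\AAA(L)-1} \odot f') + x_1 (\lambda_L^{\rho_\AAA(L)} \odot f'),
\]
combined with Lemma~\ref{derivative-implication-lemma}, yields exactly the condition $f' \in (I'_{\AAA-H_1})^\perp$ via the exponent identity $\rho_\AAA(L) - 1 = \rho_{\AAA-H_1}(L)$ for $L \not\subseteq H_1$. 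This analysis requires $\AAA - H_1$ to remain essential, i.e., $H_1$ not to be a coloop. The coloop case, where $F_\AAA = F_{\AAA-H}$ (note the different simplification from the external case $F_\AAA = (1+q+t)F_{\AAA|H}$), must therefore be handled separately, likely by proving an algebraic isomorphism $\Omega_n/I'_\AAA \cong \Omega_{n-1}/I'_{\AAA-H_1}$ after shrinking the ambient space to the span of $\AAA - H_1$, as a counterpart to Lemma~\ref{lem:coloop-coincidence}.

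The principal obstacle is the analog of Lemma~\ref{perp-membership}, which is needed for right-exactness of $\psi$. In the external case, the family $\{\prod_{i \in I} d\alpha_i \times \prod_{s \in S} \alpha_s\}$ was shown to lie in $I_\AAA^\perp$ via a pigeonhole argument: any index set $T \subseteq S$ with $|T| = \rho+1$ must contain some $t$ with $L \subseteq H_t$, forcing $\lambda_L \odot \alpha_t = 0$. With the internal exponent $\rho$ in place of $\rho+1$, this argument breaks completely; a direct calculation shows that $\prod d\alpha_i \times \prod_{s \in S} \alpha_s$ lies in $(I'_\AAA)^\perp$ only when $\AAA \setminus S$ still spans $\CC^n$, a strictly more restrictive condition that produces insufficiently many elements to span in general. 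Identifying the correct enlarged family is essentially the open problem of providing a Macaulay-inverse basis for the classical internal zonotopal algebra; the proof of \cite[Proposition 4.5(iii)]{AP10} was shown in \cite{APcorr} to be faulty precisely here, and no complete replacement is known. A successful resolution would likely require refining the external-activity framework of Theorem~\ref{m-is-basis} to incorporate richer matroidal data (for instance, in the spirit of Lenz \cite{Lenz16}), and would simultaneously settle the longstanding open problem of describing the Macaulay inverse of the internal zonotopal algebra.
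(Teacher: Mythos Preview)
The statement in question is labeled a \emph{Conjecture} in the paper, and the paper does not prove it; the authors explicitly present it as open and explain why the techniques used for Theorem~\ref{th:hilb_superspace} do not immediately extend. So there is no ``paper's own proof'' to compare against.

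Your proposal is not a proof either, and you are candid about this: you correctly isolate the point of failure at the analogue of Lemma~\ref{perp-membership}, which is needed for the surjectivity of $\psi$ in the would-be short exact sequence. The pigeonhole argument there genuinely collapses when the exponent drops from $\rho_\AAA(L)+1$ to $\rho_\AAA(L)$, and the resulting gap is, as you note, essentially the unresolved problem of exhibiting a Macaulay-inverse basis for the classical internal zonotopal algebra (cf.\ the authors' remarks surrounding \cite{APcorr} and \cite[Proposition~4.5(iii)]{AP10}). Your observations that injectivity of $\varphi$ and middle exactness survive, that essentiality ($r=n$) becomes mandatory because $\rho_\AAA(L)=0$ now puts $1$ into $I'_\AAA$, and that the coloop case yields $F_\AAA = F_{\AAA-H}$ rather than a $(1+q+t)$ factor, are all correct and consistent with the paper's discussion.

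In short: your write-up is an accurate diagnosis of why the conjecture resists the paper's method, and it matches the authors' own assessment. It is not a proof, but none is expected.
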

At $t=0$ we recover $\Hilb(\CC[\xx_n]/J_{\AAA,0};q)$ as is expected. Developing the right-hand side in Conjecture~\ref{conj:internal} following~\eqref{eq:defn_tutte} gives 
\begin{align}
    (1+t)^rq^{m-r}T_{\AAA}\left(\frac{1}{1+t},\frac{1}{q}\right)& = q^{m-r}\sum_{A\subseteq \mathsf{E}}(-1)^{r-r(A)} t^{r-r(A)}(1+t)^{r(A)}\left(\frac{1}{q}-1\right)^{|A|-r(A)},
\end{align}
and extracting the coefficient of $t^r$ gives $q^{m-r}T_{\AAA}\left(0,q^{-1}\right)$. This last quantity is also the singly graded Hilbert series of the internal zonotopal algebra $\CC[\xx_n]/J_{\AAA,-1}$. Thus the truth of the preceding conjecture implies that an exact parallel to the first two claims in Corollary~\ref{cor:hilb_cases} holds.
With some more work, the reader may convince themselves that an analogue of Corollary~\ref{cor:f_vector} holds for real arrangements as well -- the only change being one now records the $f$-vector of the `bounded' polyhedral complex of $\AAA^{\mathrm{gen}}$.

\begin{example}\label{ex:demonstrating_conj_hilbert}
    Consider $\AAA\subseteq \mathbb{R}^2$ determined by the hyperplanes $x_1=0$, $x_2=0$, and $x_1+x_2=0$.
    One may check that 
    \[
        I_{\AAA}=(x_1^2,x_2^2,x_1x_2,\theta_1x_1,\theta_2x_2, \theta_1x_2+\theta_2x_1).
    \]
    One can then compute the following monomial basis for $\Omega_n/I'_{\AAA}$:
    \[
    \{1, x_1, x_2, \theta_1,\theta_2,\theta_1x_2,\theta_1\theta_2\}.
    \]
    We thus see that 
    \[
    \Hilb(\Omega_n/I'_{\AAA};q,t)=(1+2q)+t(2+q)+t^2.
    \]
    It is easily checked that $(1+t)^2qT_{\AAA}(1/(1+t),1/q)$ for  $T_{\AAA}(x,y)=x^2+x+y$ is precisely the Hilbert series computed above, in harmony with Conjecture~\ref{conj:internal}.
    Additionally, note that setting $q=1$ gives the polynomial $3+3t+t^2$ whose sequence of coefficients agrees with the $f$-vector corresponding to the unique bounded face of the arrangement on the right in Figure~\ref{fig:hyperplane_plus_generic}. 
\end{example}

Recall also from Section~\ref{sec:spanning} that we described $\Hilb(\external_{\AAA};q,t)$ by a specialization of a formula due to Eur--Huh--Larson \cite{EHL23}. The expression in Conjecture~\ref{conj:internal}, on the other hand, is not obtained using their results. 
Curiously, it is the earlier work of Berget--Eur--Spink--Tseng \cite[Theorems A and B]{BEST23} that we need instead. Using the degree map on the permutahedral toric variety the aforementioned set of authors consider the specialization
\begin{align}\label{eq:tm_best}
    \underline{t}_{\MMM}(x,y,z,w)\coloneqq (x+y)^{-1}(y+z)^{r}(x+w)^{n-r}T_{\MMM}\left(\frac{x+y}{y+z},\frac{x+y}{x+w}\right),
\end{align}
where $r$ is the rank of the matroid $\MMM$ whose ground set has cardinality $n$. 
Setting $x=0$, $y=1$, $z=t$ and $w=q$ recovers the right-hand side in Conjecture~\ref{conj:internal}. 
Despite the similarities, \cite[Remark 1.13]{EHL23} emphasizes that it is unclear how the geometric definitions of $\underline{t}_{\MMM}(x,y,z,w)$ and $t_{\MMM}(x,y,z,w)$ are related. 
This also informally suggests that understanding the case of this subsection could involve techniques different than those employed in this article.

\subsection{The case $k > 1$}
\label{subsec:k_ge_1}

For any arrangement $\AAA$ in $\CC^n$ and 
any integer $k \geq -1$,
Ardila and Postnikov defined 
an ideal $J_{\AAA,k} \subseteq \CC[\xx_n]$
with a generator $\lambda_L^{\rho_\AAA(L) + k}$
for each line $L$ in $\CC^n$.
These ideals have favorable properties for $k \geq -1$.
This paper studied the differential closure
$I_\AAA$ of the ideal $J_{\AAA,1}$.
It is natural to ask for superspace generalizations
of the ideals $J_{\AAA,k}$ for $k > 1$.

For $k > 1$, the most na\"ive generalization
of $J_{\AAA,k}$ is the differential closure
\begin{equation}
I_{\AAA,k} \coloneqq \left( \lambda_L^{\rho_\AAA(L) + k }, d \lambda_L^{\rho_\AAA(L) + k } \,:\, L \subseteq \CC^n \text{ a line}  \right) \subseteq \Omega_n
\end{equation}
of $J_{\AAA,k}$ inside $\Omega_n$.
When $k = 1$, we have $I_{\AAA,1} = I_\AAA$.
The authors have been unable to find a direct connection
between the superspace quotient 
$\Omega_n/I_{\AAA,k}$ and the combinatorics of 
the arrangement $\AAA$ when $k > 1$; some 
difficulties are illustrated in the following example.

\begin{example}
Consider 
the ideal $I_{\AAA,k}$ when $k = 2$ 
and $\AAA$ is the arrangement 
defined by $x_1 x_2 \in \CC[x_1, x_2]$.
The coordinate axes in $\CC^2$ give rise to the generators 
$$
x_1^3, x_2^3, x_1^2 \theta_1, x_2^2 \theta_2
$$
of $I_{\AAA,2} \subseteq \Omega_2$. If $a, b \in \CC^{\times}$ are nonzero complex numbers, the line $\CC \cdot (a,b)$ gives rise to the generators
$$
(a x_1 + b x_2)^4, (a x_1 + b x_2)^3 (a \theta_1 + b \theta_2)
$$
of $I_{\AAA,2}$.  
The quotient ring $\Omega_2 / I_{\AAA,2}$ has vector space basis
\begin{multline*}
\{1\} \cup \{x_1, x_2\} \cup \{x_1^2, x_1 x_2, x_2^2 \} \cup \{ x_1^2 x_2, x_1 x_2^2 \}  \\
\cup \{\theta_1, \theta_2 \} \cup \{ x_1 \theta_1, x_1 \theta_2, x_2 \theta_1, x_2 \theta_2 \} \cup \{ x_1 x_2 \theta_1, x_1 x_2 \theta_2, x_1^2 \theta_2, x_2^2 \theta_1 \} \cup
\{ x_1^2 x_2 \theta_2 \} \\
\cup \{ \theta_1 \theta_2 \} \cup \{ x_1 \theta_1 \theta_2, x_2 \theta_1 \theta_2 \} \cup \{ x_1 x_2 \theta_1 \theta_2 \}
\end{multline*}
where we have grouped monomials according to their bidegrees.  In particular, we have
\begin{equation*}
\Hilb( \Omega_2 / I_{\AAA,2};q,t) = 1 + 2q + 3q^2 + 2 q^3 + 2t + 4 qt + 4 q^2 t + q^3 t + t^2 + 2 q t^2 + q^2 t^2.
\end{equation*}
Setting $q \rightarrow 1$ in this Hilbert series gives 
$8 + 11 t + 4 t^2$. We do not know how to relate this polynomial to the combinatorics of $\AAA$.
\end{example}

We note further that Ardila--Postnikov consider power ideals of two flavors in \cite[Section 4.1]{AP10}, and they happen to coincide in the internal, central, and external cases.
It is currently unclear to us which of those two choices is the `correct' one to generalize to the superspace case when~$k>1$.

\section*{Acknowledgements}
We are extremely grateful to Andy Berget, Sarah Brauner, Yu Li, Vic Reiner, Jos\'e Alejandro Samper, and Hunter Spink for enlightening conversations and/or correspondence.

\bibliographystyle{hplain}
\bibliography{the_bibliography}

\end{document}